\documentclass[a4paper]{article}
\usepackage[a4paper,margin=3cm]{geometry}
\usepackage[utf8]{inputenc}
\usepackage{amsthm,amssymb,amsbsy,amsmath,amsfonts,amssymb,amscd}
\usepackage{amsmath}
\usepackage{amssymb}
\usepackage{xcolor}
\usepackage{graphicx}
\usepackage{subcaption}

\usepackage{subfiles}
\usepackage{csquotes} 
\usepackage[todonotes={textsize=scriptsize}]{changes}
\usepackage[colorlinks=true,linkcolor=blue, citecolor=red]{hyperref}
\usepackage{color}

\usepackage{microtype}

\newcommand{\vertiii}[1]{{\left\vert\kern-0.25ex\left\vert\kern-0.25ex\left\vert #1 
    \right\vert\kern-0.25ex\right\vert\kern-0.25ex\right\vert}}
\newcommand{\vincent}[1]{\textcolor{red}{#1}}

\newcommand{\E}{\mathbb E}
\newcommand{\R}{\mathbb R}
\newcommand{\Pp}{\mathbb P}
\theoremstyle{plain}
\newtheorem{theorem}{Theorem}[section]
\newtheorem{prop}[theorem]{Proposition}
\newtheorem{lemme}[theorem]{Lemma}

\newtheorem{rque}[theorem]{Remark}
\newcommand{\be} {\begin{equation}}
\newcommand{\ee} {\end{equation}}
\newcommand{\bea} {\begin{eqnarray}}
\newcommand{\eea} {\end{eqnarray}}
\newcommand{\Bea} {\begin{eqnarray*}}
\newcommand{\Eea} {\end{eqnarray*}}

\title{ Growing random planar network with \\oriented branching and fusion\\
}
\author{Vincent Bansaye\thanks{CMAP, CNRS, INRIA Merge, École polytechnique, Institut Polytechnique de Paris, Palaiseau, France, \\vincent.bansaye@polytechnique.edu, gael.raoul@polytechnique.edu, milica.tomasevic@polytechnique.edu }, Gael Raoul\footnotemark[1]\,{ }\thanks{International Research Laboratory France-Vietnam in Mathematics and its Applications, CNRS - VAST - VIASM}, Milica Tomasevic\footnotemark[1]}
\date{}

\begin{document}
\maketitle
\begin{abstract}
We consider a growing planar network 
where a tip grows at constant speed, branches at constant rate and inactivates when it meets a branch already created. We only consider here orthogonal branching occurring always in the same direction.  This yields a  spatial branching property to the growing network. The connected components of the network then form a  branching process of rectangles   with double immigration. Using a spine approach for a typical rectangle and coupling arguments, the study  is boiled down to a one dimensional stick breaking model with aging. We can then  prove long time convergence of empirical measure of  the family of rectangles after polynomial rescaling. The limiting  distribution and speed of convergence  can be explicitly described. The proofs also rely on the description of common ancestor of rectangles in the branching structure with double immigration.  
\end{abstract}

\textbf{Mathematics Subject Classification.}  60J85,60J80, 92C15, 35B40.

\textbf{Keywords:} growth-fragmentation, branching process, spinal process, growing network.

\tableofcontents
\section{Introduction}

In this work, we consider a growing planar network embedded in 
$\mathbb{R}^2$. The network is composed of straight branches, whose growth occurs at their active tips. Each active tip propagates along a straight line at constant speed. Branching events occur according to a fixed rate, with each active tip undergoing binary branching into branches with prescribed directions. A tip ceases growth when it intersects any previously created part of the network, at which point it becomes inactive. The resulting structure is a planar embedded network with non-overlapping growth. One may think of the construction of road or railway networks featuring bifurcations and non-overlapping paths. Our primary motivation, however, comes from the growth of living systems, such as organ morphogenesis and mycelial networks, which will be discussed in more detail below.\\
The inactivation of tips upon meeting the existing network induces interactions between branches and breaks the classical branching property. As a result, the process is related to branching–annihilating random walks \cite{Birkneral,Bramson, CardyT}. At the same time, the non-overlapping constraint between branches induces independence between distinct connected components and preserves an underlying branching structure. This structure becomes particularly  tractable and nice when branching events are orthogonal and share a common orientation. In this work, we focus on this simplified setting, which can be viewed as a toy model serving as a building block for more general growth processes. In this regime, one can analyze in detail, and in a largely explicit manner, the long-time distributions and the genealogical structure of the network, including the sizes of the rectangles composing it and the relationships between their common ancestors. Possible extensions of the model are briefly discussed in the Discussion section.\\
Moreover, by simple rescaling of time and space, the two parameters—the elongation rate and the branching rate—can be set to $1$ without loss of generality. The problem is also  invariant by rotation, and the network exhibits additional symmetries when growth starts from active tips in four orthogonal directions at the same time. Exploiting these symmetries, we can restrict the study to the network in a single quadrant and focus on the following model:
\begin{itemize}
    \item The process is initiated by a single active tip in $(0,0)$, oriented along the $x$-axis, i.e. along to vector $(1,0)$. Each active tip elongates its branch at speed $1$. This growth occurs in a constant direction, so that the branches form straight segments.
    \item The active tips become inactive if they encounter an existing branch, or the $y$-axis $\{x=0\}$.
    \item Each active tip produces an offspring active tip with rate $1$. Then, the mother tip continues its elongation unperturbed (\textit{straight offspring}), while the daughter active tip spurs from the mother tip at an oriented angle $\pi/2$ compared to the mother tip (\textit{orthogonal offspring}).
\end{itemize}

We will show that the growth of this network can be mapped to a branching process in which the particles correspond to aging, potentially unbounded rectangles. This leads to a structured branching process with types in $[0,\infty]^3$, corresponding to two sizes and an age along one side. Each rectangle can either split into two particles (fragmentation) or become frozen (inactivation). This framework facilitates the analysis of the network at large times, in particular the description of statistical properties and the genealogical structure. Rectangles of infinite size correspond to active tips at the boundary of the network and induce a (double) immigration mechanism in the branching process, which will be described in detail later. In addition, the growth of active tips naturally defines the age of the corresponding rectangle.

The analysis of the model will thus be carried out by a fragmentation process with  aging and immigration. Surface preservation of rectangles at fragmentation leads us to consider the $h$ transform associated to the surface and a spine approach, corresponding to a size biased typical rectangle, in the vein of the works of  Jean Bertoin and  B\'en\'edicte Hass and their tagged fragment, see in particular \cite{Bertoin, DJ} and references therein. This reduces the study of the network  to a Markov process in $\R_+^3$ describing a typical aging rectangle. By  coupling arguments, we can split the length-width of rectangles. We thus actually boil down  the analysis of the first moment semigroup of the branching process to a Markov process in $\R_+^2$ describing a  stick breaking scheme. This will give  the main ingredients of the proofs, allowing for quantitative estimates and convergence and also an analytic expansion of the stationary distribution of the  sizes.  
Related works also include studies on spatially structured fragmentation \cite{growth}. The main differences in our model are the aging mechanism, which induces freezing, and the immigration structure arising from infinite components. We also note the considerable activity in growth-fragmentation processes over the past two decades (see, e.g.,~\cite{Bertoin, BCK, Dadoun, SilvaPardo}). In contrast to classical growth-fragmentation, growth in our process originates from the spatial structure and occurs at the boundary of the network, while growth-fragmentation models typically combine growth for each particle between branching events.

More formally,  we study  the empirical measure of rectangles 
$$Z_t=\sum_{u\in V(t)\cup W(t) } \delta_{(L_u, \ell_u, A_u(t))},$$
where $V(t)$ (resp. $W(t)$) is the set  of active (resp. inactivated) rectangles created by the network at   time $t$, $(L_u, \ell_u)$ their length and width,  and $A_u(t)$ is the age corresponding to the progression of the network along the ``first side" $L_u$ of the rectangle. Let us  observe that $A_u(t)=L_u$ yields the set $u$ of inactivated rectangles at time $t$. We refer to Section~\ref{constructdef} for precise definitions and constructions. There we show that the spatial growing network can be described using the dynamics of this family of rectangles, which enjoys a branching property.
We then analyze the first moment semigroup of $Z$, defined by
\begin{equation}
\label{sgpremiermoment}
M_tf=\E(Z_t(f))=\E\left(\sum_{u\in V(t)\cup W(t)} f(L_u, \ell_u, A_u(t)) \right) \quad  t\geq 0.
\end{equation} 
As mentioned above, the main tool will be a spinal rectangle defined through  $h$ transform, with $h(L,\ell,a)=L\ell$ being the surface of a given rectangle.
For rectangles with finite width and length, this  allows to reduce the problem 
to a  stick fragmentation with aging, see Section \ref{spinal}. In Section \ref{Firstmomentsg}, we complement these results on the first moment of finite rectangles by adding the double immigration, generated by  infinite rectangles. In particular, we show that the set of inactivated (resp. active) rectangles is of order $t^2$ (resp. $t$) at time $t$, when $t$ goes to infinity.  We then exploit the genealogical structure of branching process with immigration to prove a law of large number. Thus, in Section \ref{laviedansL2}, we prove that
for any test function $f$ which grows sub-exponentially, the following convergence holds in $L^2$ (see Theorem \ref{Thprinc}):
$$\frac{\langle Z_t, f \rangle}{t^2} \stackrel{t\rightarrow\infty}{\longrightarrow}\frac{\Pi(f)}{2 \Pi(h)},$$
where $\Pi$ admits a density for the two first coordinates (length and width). The latter is given by
$$\sum_{k\geq 1} \frac{k+1}{((k-1)!)^2} \left( ke^{-(k+1)L-kl} +\frac{1+k}{k}e^{-(1+k)L-(1+k)l}\right).$$
Indeed, the measure $\Pi$ is concentrated on inactivated rectangles, whose age coincides with the length. It can be characterized through a fixed point problem, which allows to prove that it is absolutely continuous.
Moreover,  we  estimate the    speed  of convergence in $L^2$ and prove that it decreases (at least) as $1/t$.  \\
In Section~\ref{sec:discussion} we conclude with a discussion on mathematical and modelling perspectives of our work. \\
Finally, in the Appendix~\ref{sec:APP1} we describe the  corresponding partial differential equation (PDE), i.e. the PDE associated by duality to the generator of the first moment semigroup (see Eq.~\eqref{eq:nmp}). It is related to growth-fragmentation equations (see e.g. \cite{fournier2006well}) and to renewal equations (see Chapter 3 of \cite{perthame2007transport}). Population models combining size parameters and an age have been introduced in \cite{olivier2016does,doumic2020purely,gabriel2018steady}, to describe precisely the duplication dynamics of bacteria. We are not aware, however, of models similar to the equation satisfied by the mean measure of rectangles with both length and width finite, where the age structure has a direct impact on the fragmentation affecting the size parameters.
To conclude in Appendix~\ref{sec:APP2} we discuss the numerical simulations that we used to produce all the figures in this work. 



\paragraph{Literature and motivations from biology.} Formation of biological networks and their spatial structure is  of wide interest in many different contexts. Non-overlapping planar networks appear in many fields \cite{Bartelemy}, among which crystal dendrite formation \cite{Zawko}, urban development \cite{Courta} or mycelial networks \cite{Tomasevic, Dikec}. In particular, such networks appear in leaf venation. Leaf veins are very easily visible, and a great diversity of structures appear. They typically contain loops, a property that is explained by a way to enhance the resilience of the network \cite{Katifori}, and contain fractal-like structures. This structure can be used for species classification, and it has a direct impact on the plant phenotypes, such as its ability to resist water stress \cite{Scoffoni}.  They have therefore been extensively studied, with several points of view. Following Turing pioneering ideas \cite{Turing}, models based on reaction-diffusion equations have been proposed to describe the dynamics of chemical signals guiding the formation of vessels \cite{Meinhardt,Hogan}. Most models, however, are based on local or global optimization approach \cite{Matos, Ronellenfitsch}: The network would be structured to optimize objectives, such as fluid transport efficiency, resistance to drought, resilience to herbivore, etc.

Network formation models also play an essential role in organ morphogenesis. Here also, models based on extensions of Turing reaction-diffusion models have been proposed \cite{Sun,Menshykau}, relating the development of the branching structures of e.g. lungs and kidney thanks to reaction-diffusion equations. These models rely on a good understanding of local growth and branching mechanisms. On the contrary, the regulation mechanisms at a larger scale, that could explain the branching patterns observed, remain largely unknown \cite{Yu}. Over the past decade, simple macroscopic branching models have been introduced, see \cite{Hannezo}, in particular to describe the structure on organs having a non-overlapping $2d$ structure. Authors propose to rely on active tips that explore the plane, laying the vessels in their track, where the tips branch at a constant rate into two active tips, and turn inactive if they encounter a vessel \cite{HannezoOpinion}. These models have been introduced and developed by a team lead by Edouardo Hannezo, and lead to the formation of surprisingly relevant branching structures, with very few parameters. They could provide a unifying point of view on branching structures morphogenesis. Examples  of organs where this model shows promising results include \emph{mouse mammary glands},  \emph{pancreatic organoids}, \emph{kidney explants} \cite{HannezoOpinion}, and it may also be an interesting approach to study the dendritic architectures generated by some neuronal cells \cite{Fujishima,Zubler}. 

Another example are mycelial networks that play an important role  in the functioning of their ecosystem by decomposing the surrounding
organic matter. The mycelium consists of filamentous structures called hyphae that can grow and branch to create potentially huge network that can cover several kilometers. These dynamics are complicated to analyse as there is a lot of dependence in the network through nutrients sharing locally or through global signalization when the mycelium responds to stress (e.g. light, obstacle, predator). One important way to enhance the internal communication is the fusion of hyphae that may happen when a tip of one hyphe encounters the mycelial network. The mathematical difficulty then  comes from the assumption that active tips become inactive when they encounter a vessel: this implies an interaction between the tips and the other vessel it comes across (notice that vessels are the trajectories of the tips). Recently, models motivated by experimental setting \cite{Dikec} have arisen in the literature.  When  active tips are not affected by vessels (that is when vessels can cross), the dynamics of each active tip becomes independent, and the process becomes a \emph{branching process} in the sense of probability. It is then possible to derive precise properties of the structure \cite{Tomasevic}. If the interactions are less stiff, namely if the active tips can stop at a rate that is an increasing function of the number of vessels locally present, it is possible to derive macroscopic models: this has been done heuristically in \cite{Hannezo}, and a related rigorous result can be found in \cite{Catellier,Kuwata}, while a mathematical analysis of the macroscopic partial differential equations can be found in \cite{Kreten,Kreten2}. \\

{\bf Notation : constants and norms.} In this paper, we consider a model without any parameters. Our constants $C$ will thus be universal constant, which do not depend on time, nor on the initial sizes of the  rectangle,  nor on
the test functions. These constants may however change from one statement to another. \\
We first use the $L^{\infty}$ norm for finite rectangles.
It is denoted by $\parallel \cdot \parallel_{\infty}$.  We then need more complex norms to extend the space of test functions and take into account infinite rectangles. The good functional space for our work seems functions which are dominated by exponential growths. We thus consider the norm $\parallel \cdot \parallel_{\ell,\infty}$ when one size of the rectangle is less than $\ell$ and $\parallel \cdot \parallel$ without restrictions. They are involved in the description of the first moment semigroup and   defined by $$\parallel f \parallel_{\ell,\infty}=\sup_{\substack{(x,y,a,a')\in \mathbb R_+^4 \\  a\leq x,\,  \,  a'\leq y\leq \ell}} (\vert f(x,y,a)\vert +\vert f(y,x,a')\vert)e^{-3x/4}, \quad \parallel f\parallel =\sup_{\substack{ (x,a,y)  \in \R_+^3, \\ a\leq x}} \vert f(x,y,a) \vert e^{-3(x+y)/4}.$$
At the end, we obtain trajectorial result and  focus on   the empirical measure restricted to finite rectangles since infinite rectangles are easily described by Poisson immigration and less numerous. The norm we can then control is the following 
$$\vertiii f:= \sup_{\substack{(L,\ell,a,a')\in \R_+^4\\ a\leq L, a'\leq \ell} }\left\{ \left(\vert f(L,\ell,a)\vert + \vert f(\ell,L,a') \vert \right)  {e^{-(L+\ell)/5}}\right\}.$$

\begin{figure}
\centering
\subcaptionbox{}
{\includegraphics[width=0.52\textwidth]{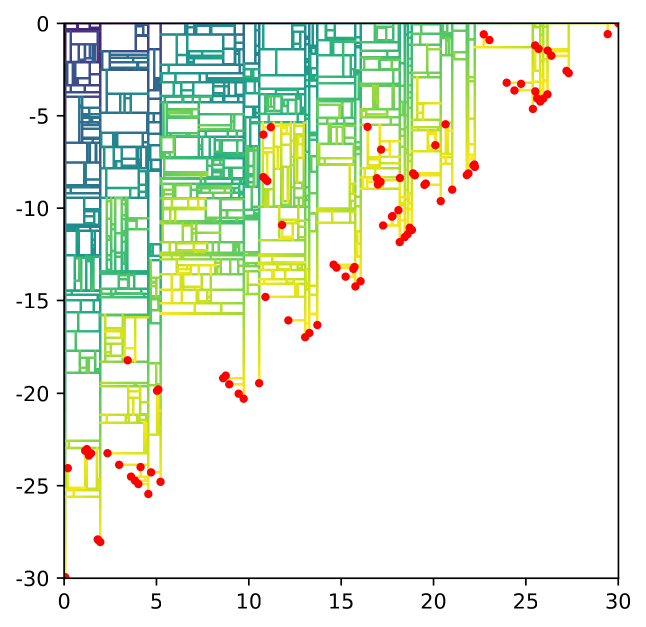}}
\subcaptionbox{}
{\includegraphics[width=0.21\textwidth]{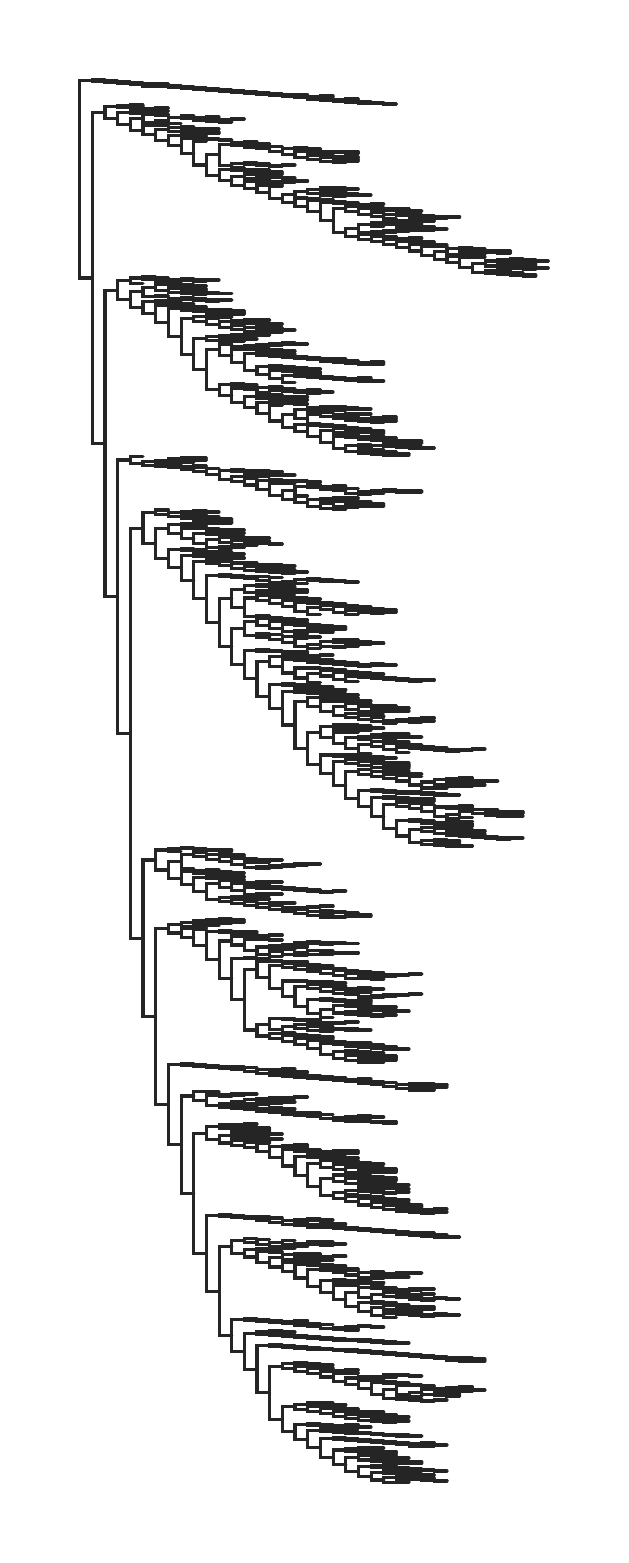}}
\caption{A simulation of the process: (a) the constructed network at $t=30$, (b) the corresponding genealogical tree.}\label{fig:simu1}
\end{figure}

\section{Construction of the network and  branching structure}
\label{constructdef}
The model we are interested in has been briefly described in the introduction. The purpose of this section is to build the corresponding planar stochastic process and link it to a branching process on the space of aging rectangles. First, in Section~\ref{subsec:skeleton} we construct a planar network with possible overlapping of its branches. Then in Section~\ref{subsec:fusion} we take into account the fusion events (inactivation of tips of branches when they hit an already constructed part of the network. Then we derive in Section~\ref{subsec:rectangles} the branching process describing the dynamics of the connected components of the network (rectangles). We conclude the loop by connecting everything to the model described in the introduction, see Section \ref{ConnectionMod}.

\subsection{Construction of the full skeleton   without  fusion}\label{subsec:skeleton}

We start by reminding the Ulam-Harris-Neveu notation for genealogy,
$$\mathcal U=\{(u_1, \ldots u_n) \, : \, n\in \mathbb N,\,  u_i \in \mathbb N\}.$$
An individual $u=(u_1, \cdots, u_n)\in \mathcal U$  will correspond to a branch, with $u_i \in \mathbb N$ and $n=\vert u \vert $ is the length of $u$.
We define two operations on $\mathcal U$ : for each $u=(u_1, \cdots, u_n)$, 
$$u^{\rightarrow} \, = \, (u_1, \ldots,u_{n-1}, u_n +1), \quad u^{\curvearrowright} \, = \, (u_1, \ldots, u_n,1),$$
which correspond to the two tips created from $u$ at branching,  resp. straight offspring  and the orthogonal offspring. 
We will also need  the ancestor of $u=(u_1,\ldots, u_{n-1}, u_n)$ corresponding to the last orthogonal branching, it is denoted by
$$u^{\perp}=(u_1,\ldots, u_{n-1}).$$
The direct ancestor $u^-$ of $u$ is
$$u^-=(u_1,\ldots, u_{n-1})=u^{\perp} \, \,  \text{if} \,  \, u_n=1; \quad
u^-=(u_1,\ldots, u_{n}-1) \, \,  \text{if} \, \,  u_n>1.$$
In particular, the first branch created  is labeled  by $u=(1)$ and it continues as $u^{\rightarrow}=(2)$ and its orthogonal offspring is $u^{\curvearrowright}=(11)$, whose orthogonal ancestor is also the direct ancestor : $(u^{\curvearrowright})^{\perp}=(u^{\curvearrowright})^-=(1)$. Note that the notations introduced in this Section are illustrated in Figure~\ref{fig:M01}. 

\begin{figure}
    \centering
    \includegraphics[width=0.5\textwidth]{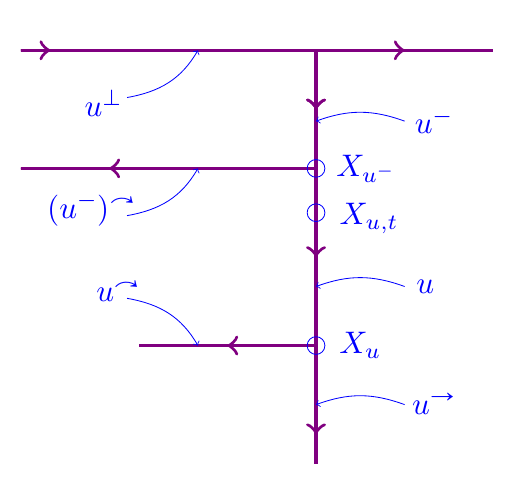}
    \caption{Notations introduced in Section~\ref{subsec:skeleton}}
    \label{fig:M01}
\end{figure}

Let us now define by induction the sequence of times and positions $(T_u,X_u)_{u\in \mathcal U}$ associated to different branches, without considering fusion events for now.  The first branching time, for the root $(1)$, is an exponential random variable $T_{(1)}$ with parameter $1$ and is also its length.
Independently of $T_{(1)}$, we  introduce    i.i.d. exponential random variables with parameter $1$,  denoted by  $(\Delta^{\rightarrow}_u, \Delta^{\curvearrowright}_u)$ for any  $u\in \mathcal U$. They give the length   of the two branches $u^{\rightarrow}$ 
and $u^{\curvearrowright}$.  We can then  define recursively the successive times of branching as follows: 
$$T_{u^{\rightarrow}}=T_u+\Delta^{\rightarrow}_u, \qquad T_{u^{\curvearrowright}}=T_u+\Delta^{\curvearrowright}_u$$
for any $u\in \mathcal U$.
For a given branch labeled by $u\in \mathcal U$, the time $T_u$ is its time of branching and $T_{u-}$ is its birth time, with convention $T_{(1)^-}=T_{\varnothing}=0$ a.s.\\ 
We can now define the corresponding positions in the plane, i.e. the position $X_u$ where $u$ branches, and $X_{u^-}$ the position where  $u$ is born (see Fig. \ref{fig:M01}). The branching events make the tips turn  on the right and after $4$ turns, we  are back in the original direction. That makes us  work with 
$\vert u \vert =4k +i$, $k\geq 0, \, i \in \{1,2,3, 4\}$: $i=1$ corresponds to branches $u$  growing to the right, $i=2$ to branches growing to the bottom, $i=3$ to the left and $i=4$ to the top.  The positions $X_u$ of the end of the branch $u$ can be defined inductively: for any 
$u\in \mathcal U$ such that $\vert u\vert \in 4\mathbb N_0 +1$,
$$X_{u^{\rightarrow}}=X_u+(\Delta^{\rightarrow}_u,0), \qquad X_{u^{\curvearrowright}}=X_u-(0,\Delta^{\curvearrowright}_u);$$
for any 
$u\in \mathcal U$ such that $\vert u\vert \in 4\mathbb N_0 +2$,
$$X_{u^{\rightarrow}}=X_u-(0,\Delta^{\rightarrow}_u), \qquad X_{u^{\curvearrowright}}=X_u-(\Delta^{\curvearrowright}_u,0);$$
for any 
$u\in \mathcal U$ such that $\vert u\vert \in 4\mathbb N_0 +3$,
$$X_{u^{\rightarrow}}=X_u-(\Delta^{\rightarrow}_u,0), \qquad X_{u^{\curvearrowright}}=X_u+(0,\Delta^{\curvearrowright}_u);$$
for any 
$u\in \mathcal U$ such that $\vert u\vert \in 4\mathbb N_0 +4$,
$$X_{u^{\rightarrow}}=X_u+(0,\Delta^{\rightarrow}_u), \qquad X_{u^{\curvearrowright}}=X_u+(\Delta^{\curvearrowright}_u,0).$$
We identify now the points corresponding to our growing network as elements
$(u,t)\in \mathcal U\times \R_+$, where $u$ gives  the branch to which  the point belongs   and $t$ the time of creation of the point.
Thus the set $$\mathcal R= \{ (u,t) : u\in \mathcal U, t\in \R_+,  T_{u^-}\leq t \leq T_u\}$$
gathers the elements which  are indeed created in the network (without fusion).  The location of   $(u,t)\in \mathcal U \times \R_+$  is
$$X_{u,t}=X_{u^-}\frac{T_u-t}{T_u-T_{u^-}}+X_u\frac{t-T_{u^-}}{T_u-T_{u^-}}.$$ 
Thus $\{ X_{u,t} : (u,t) \in \mathcal R \} \subset \R^2$ is the full network created by considering branching events but no fusion event.



\subsection{ Construction of the network with fusion}\label{subsec:fusions}
\label{branchfusion}
\label{subsec:fusion}
Until now we have constructed a network allowing overlapping. We are now ready to take into account the following fact:  When a tip of an elongating branch meets an existing branch, it should be inactivated. In other words, after the first overlapping, the further dynamics of the growing branch should be forgotten.  We will then see that the newly created network can be associated with a spatial random tree enjoying a branching property. We refer to Figure~\ref{fig:M02} for an illustration of the objects and notations introduced in this section.

\begin{figure}
    \centering
    \includegraphics[scale=0.9]{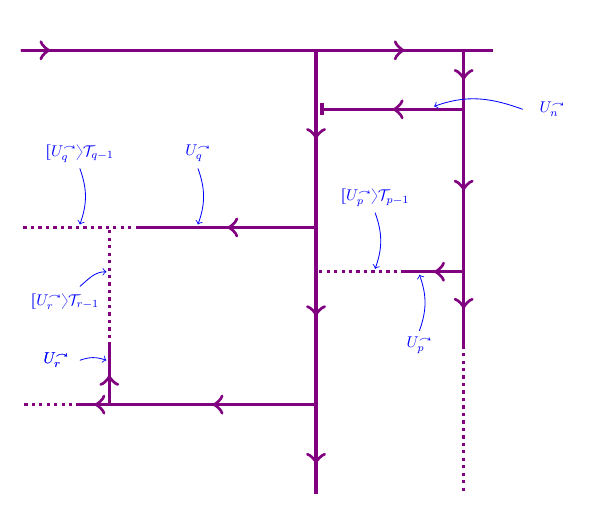}
    \caption{Illustration of the fusion events described in Section~\ref{subsec:fusion}. In this image, the network at time $t>0$ is represented in black, and the imprint is constituted of the black network, and the dotted lines. The fragment $U_n^\curvearrowright$ has already fused (thus $U_n^\curvearrowright\in W(t)$), while $U_{p}^\curvearrowright,U_{q}^\curvearrowright,U_{r}^\curvearrowright\in V(t)$ are active. We notice that the dotted lines are stopped by the fragments previously constituted (this is the case for $[U_{p}^\curvearrowright\rangle \mathcal T_{p-1}$, stopped by $u\in \mathcal T_{p-1}$), or by previously constituted dotted lines (as it is the case for $[U_{r}^\curvearrowright\rangle \mathcal T_{r-1}$, which is stopped by $[U_{q}^\curvearrowright\rangle \mathcal T_{q-1}$, with $q<r$).}
    \label{fig:M02}
\end{figure}

Recall that $(T_u)_{u\in \mathcal{U}}$ is the sequence of times at which $u$ branches. We now order the fragments by increasing birth time $(T_{u^-})_{u \in \mathcal U}$ and define  
$(U_n)_{n\in \mathbb N}$ so that a.s.
$$ \forall n\in \mathbb N, \, T_{U_n^-}\leq T_{U_{n+1}^-}, \qquad \{ U_n :  n \in \mathbb N\}=\mathcal U.$$
Notice that each branching event leads to the birth of two branches and
$$U_{2k}^-=U_{2k+1}^-, \qquad 
T_{U_{2k}^-}=T_{U_{2k+1}^-}$$ for $k\geq 1$. The branches $U_{2k}$ and $U_{2k+1}$ can be interchanged in this ordering. Let us choose first the branch that represents the straight offspring  and then the one that is the orthogonal offspring, i.e. we adopt the convention that $U_{2k}=(U_{2k}^-)^\rightarrow$, $U_{2k+1}=(U_{2k+1}^-)^\curvearrowright$.  
We write $[u\rangle$ for the half line  associated with $u$. It  starts from $X_{u-}$, with direction $X_u-X_{u^-}$ and is defined by
$$[u \rangle:=\{X_{u,t} : t\geq  T_{u^-} \}.$$
We note that $U_1=(1)$ and $[U_1\rangle=[0,\infty)\times \{0\}$ is the half line of $x$ coordinates.
For a halfline $[u\rangle$ and a given subset $\mathcal G$ of $\R^2$, we define the operation $[u\rangle \mathcal G$ of restricting the halfline to the part which has not encountered $\mathcal G$ as follows:
\begin{align*}
[u\rangle \mathcal G&:=\{ z\in [u\rangle \, : \,   (X_{u-},z] \cap \mathcal G=\varnothing \}=\{X_{u,t} : T_{u-}\leq t< T_{u}^{\infty}(\mathcal G)\}.
\end{align*}
Here $T_{u}^{\infty}(\mathcal G)\in [0,\infty]$ yields the time when the half line $[u \rangle$ encounters $\mathcal G$. When it indeed happens, i.e. $T_{u}^{\infty}(\mathcal G)<\infty$,  
we introduce also the point $X^{\infty}_u(\mathcal G)$ of intersection. It is characterized by $$X^{\infty}_u(\mathcal G)=X_{u,T_{u}^{\infty}(\mathcal G)} \text{ and } \quad [u\rangle \mathcal G=[X_{u-},X^{\infty}_u(\mathcal G)).$$
In words, 
the half line $[u\rangle$ generated by $u$ hits the set $\mathcal G$ for the first time in $X^{\infty}_u(\mathcal G)$ at time $T_{u}^{\infty}(\mathcal G)$.\\

We can now create inductively a spatial imprint $\mathcal T_n$  of the network  with fusion for $n\in\mathbb N$, such that
\[\mathcal T_0\subset\dots\subset\mathcal T_{n}\subset\mathcal T_{n+1}\subset\dots\subset\mathbb R^2.\]
We simultaneously define the times $T^{\infty}_u \in \R_+$ when each branch $u$ dies (by branching) or is inactivated  (by fusion).
We set 
$$\mathcal T_0:=(\{0\}\times (-\infty,0])\, \cup [U_1\rangle, \qquad T^{\infty}_{U_1}:=\infty,$$ 
i.e. $\mathcal T_0=(\{0\}\times (-\infty,0])\, \cup ([0,\infty)\times \{0\})$ is the boundary of the quadrant and the first branch $U_1=(1)$ grows forever.
For any
 $n\geq 1$, 
\begin{itemize}
\item If $X_{U_n}\in \mathcal T_{n-1}$, then $U_n$  branches   before inactivation. Hence, we  define \[T_{U_n^\curvearrowright}^\infty:=T_{U_n^\curvearrowright}^\infty\left(\mathcal T_{n-1}\right), \quad  T_{U_n^\rightarrow}^\infty:=T_{U_n}^\infty,\]
and 
we add  the half line corresponding to $U_n^\curvearrowright$  until it encounters the imprint $ \mathcal T_{n-1}$ to the imprint:
\begin{equation}\label{def:inprint}
\mathcal T_{n}:= \mathcal T_{n-1} \,  \cup \,[U_n^{\curvearrowright} \rangle \mathcal T_{n-1}.
\end{equation}
Notice that  $[U_n^\rightarrow\rangle$ already belongs to the imprint  $\mathcal T_{n-1}$, so  its time of life is the same as the one of $U_n$, while
$U_n^{\curvearrowright}$ stops when it encounters $\mathcal T_{n-1}$.
\item 
If $X_{U_n}\not\in \mathcal T_{n-1}$, then $U_n$ does not branch before inactivation. Therefore,
$$\mathcal T_{n}:=\mathcal T_{n-1},$$
and we do not need to define $T^{\infty}_{U_n^\rightarrow}$
 and $
T^{\infty}_{U_n^\curvearrowright}$.
\end{itemize}
This  sequence $(\mathcal T_n)_n$ adds up  the successive contributions of each branching event and its increasing limit provides the full imprint $\mathcal T$ of the network: 
$$\mathcal T=\cup_{n \in \mathbb N} \mathcal T_n.$$
Let $t>0$. We introduce the set of branches active at time $t$, i.e. the branches already born, that have not (yet) branched and that have not (yet) been inactivated:
\begin{equation}
\label{Vt}
V(t)=\{ u \in \mathcal U : T_{u-}\leq t < T_u\wedge T_u^\infty \}=\{ u \in \mathcal U : T_{u^-}\leq t < T_u\ \, , \,  X_{u,t} \in \mathcal T\}.
\end{equation}
Let also $W(t)$ the set of fragments that have been inactivated (by fusion) before time $t$:
\begin{equation}
\label{Wt}
W(t)=\{ u \in \cup_{s\in[0,t]}V(s) : T_u^\infty<T_u, \, T_u^\infty\leq t \} .
\end{equation}

Finally, the network $\mathcal N(t)$ at time $t$ can be defined as the collection of active points until time $t$ 
\begin{align*}
    &\mathcal N(t):=
\{ (u,s) \in \mathcal U \times [0,t] \, :  \,  T_{u-}\leq s < T_u\wedge T_u^\infty \}.
\end{align*}
Observe that each point in  the network    has  a unique representative $(u,t)$, except of the points where branching occurs. We also remark that $\mathcal N(t)$ is a subset of $\{ (u,s) \in \mathcal U \times [0,t]  : X  _{u,s} \in \mathcal T\}$. 

\subsection{A  branching process with freezing for aging rectangles}\label{subsec:rectangles}

The spatial network we constructed in the previous section  splits
the space (the quadrant) in a partition of rectangles, with possibly infinite sides,  where along some sides growth is occurring and, hence, they have not yet been completed. We  describe now the dynamics in time of this collection of rectangles which forms the random connected components of the network. The key point is that this splitting in disjoint rectangles  provides independence between them and a branching property. In other words, in this section,  we  reduce  the analysis of the  growing network to a branching process
of particles that are rectangles characterized by three real values, two for the length and width  and the third  for the age  that  represents the growing (elongation) along the length, while the width is completed. The growth will always happen along the first coordinate (representing the length variable).  
 \\

\begin{figure}
\centering
\subcaptionbox{}
{\includegraphics[scale=0.9]{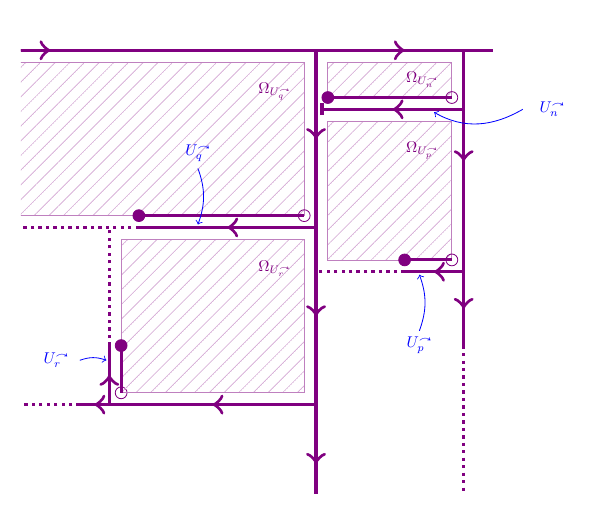}}
\subcaptionbox{}
{\includegraphics[scale=0.9]{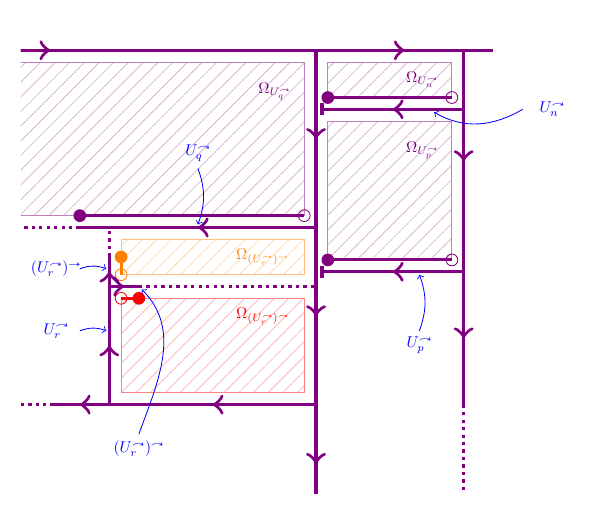}}
\caption{Illustration of the notations introduced in Section~\ref{subsec:rectangles}: (a) represents the network at a given time $t>0$, and (b) the same process at a later time $s>t$. The fragment $U_n^\rightarrow$ has already fused at time $t$ (i.e. $U_{n}^\curvearrowright\in W(t)$), and therefore this fragment, the associated rectangle $Z_{U_n^\curvearrowright}$ and its spatial representation  $\Omega_{U_n^\curvearrowright}$ are the same in (a) and (b). The fragment $U_{p}^\curvearrowright$ fuses between times $t$ and $s$, while no event (fusion or branching) is affecting $U_{q}^\curvearrowright$ during the time interval $[t,s]$. Finally, $U_{r}^\curvearrowright\in V(t)$ encounters a branching event at time $t$: the fragment $U_{r}^\curvearrowright$ then becomes inactive at time $s$, that is $U_{r}^\curvearrowright\notin V(s)\cup W(s)$, while the two descendent appear: $(U_{r}^\curvearrowright)^\rightarrow\in V(s)$, $(U_{r}^\curvearrowright)^\curvearrowright\in V(s)$. }\label{fig:sNotationsrectangles}
\end{figure}

For any $t\geq 0$, 
we recall that  $V(t)$ is the label set of branches $u$ active at time $t$, i.e.  which are born and have not yet branched or been inactivated, while $W(t)$ is the set of branches that have been inactivated before time $t$ (by fusion) and are frozen. 
To each branch $u\in V(t)\cup W(t)$ we associate a rectangle $Z_u(t)$, providing a partition of the quadrant. This rectangle  characterized by its length $L_u=T^{\infty}_{u}-T_{u^-}\in (0,+\infty]$  and  its width $\ell_u=T_{u^{\perp}}-T_{(u^{\perp})^-}\in (0,+\infty]$, which are fixed from birth. We also associate to $u$  its age $A_u(t)$ which corresponds to the time passed from the birth time $T_{u^-}$ , i.e. the time from which it actively grows. Thus,
\begin{align}
       \label{DefRec} 
Z_u(t)&=(L_u,\ell_u,A_u(t)) \nonumber \\
&:=(T^\infty_{u}-T_{u^-}, T_{u^{\perp}}-T_{(u^{\perp})^-},(t-T_{u^-})\wedge(T^\infty_{u}-T_{u^-}) ).
\end{align}
Note that this rectangle is not frozen (still active)  iff $0\leq A_u(t)<L_u$. It is then aging, i.e. growing at speed $1$ and belongs to $V(t)$.  It is created with $A_u(t)=0$, and frozen (inactivated) when $A_u(t)\geq  L_u$. 
We  notice that finite aging rectangles $Z_u(t)=(L_u,\ell_u,A_u(t))$  live in 
$\R_+^3$. Moreover the third coordinate is smaller than the first one in our construction. 
We also observe that the lengths  of rectangles on the boundary are infinite
 and one single fragment has length and width infinite. For convenience, we write $\mathcal X$ the full state space and set
$$\mathcal X= \{(L,\ell,a) \in \mathbb R_+^3 \, a\leq L\}  \, \cup \, (
\{+\infty\}\times \mathbb R_+^2 ) \,  \cup \,  (\{+\infty\}^2 \times \mathbb R_+).$$
Using \eqref{Vt} and \eqref{DefRec}, we introduce the random point measure on $\mathcal X \times \mathcal{U}$ which collects all 
the rectangles at time $t$, either active (and thus aging) or inactivated,
$$\overline{Z}_t=\sum_{u\in V(t) \cup W(t)} \delta_{(Z_u(t),u)}.$$
 Following for instance~\cite{BansayeMeleard2015}, the process  $\overline{Z}$ is represented by a stochastic differential equation using Poisson point measure. This yields in particular the following classical semi-martingale decomposition. At a branching time, if a rectangle has the coordinates $(L,\ell,a)$, we define the operations of fragmentation  
$\tau^\rightarrow$ and $\tau^\curvearrowright$  giving the  \textit{straight} and \textit{orthogonal}  offspring rectangles 
\begin{align}
\label{def:tau12}
\tau^\rightarrow(L,\ell,a)&=(L-a,\ell,0), \qquad
\tau^\curvearrowright(L,\ell,a)=(\ell,a,0)
\end{align}
and consider   functions $f : \mathcal X \times \mathcal U \rightarrow \R$ which are   continuous, bounded  and of class $C^1$  with respect to the third coordinate (age variable). 
Then $\overline{Z}$ satisfies for $t\geq 0$,
\begin{align}
&\langle \overline{Z}_t,f \rangle= \langle \overline{Z}_0,f \rangle + \int_0^t  \int_{\mathcal X \times \mathcal U}  \frac{\partial f}{\partial a} ( L,\ell,a, u) 1_{a<L} \overline{Z}_s (dL,d\ell,da,du) \,ds + M_t^f   \label{eq:semi-group}\\
&\, +\int_0^t  \int_{\mathcal X \times \mathcal U} 1_{a<L} \Big(f(\tau^\rightarrow(L,\ell,a),u^\rightarrow)
+f(\tau^\curvearrowright(L,\ell,a),u^\curvearrowright)  -f(L,\ell,a,u)\Big) \overline{Z}_s (dL,d\ell,da,du) \, ds,\nonumber 
\end{align}
where $(M_t^f)_{t\geq 0}$ is a martingale.
We also observe that by construction, the rectangles evolve independently  from one to another and  $\overline{Z}$ satisfies the branching property. Notice here that $\overline{Z}$ has been constructed in the previous section with the initial condition $\overline{Z}_0=\delta_{\infty, \infty,0, 1}$. This construction can be easily adapted to  an other initial condition $(L,\ell,a,1)\in \mathcal{X} \times \mathcal{U}$. \\
Let us finally observe that the branching process $\overline{Z}=(\sum_{u\in V(t) \cup W(t)} \delta_{(Z_u(t),u)} : t\geq 0)$ could be defined as the unique (weak) solution of 
\eqref{eq:semi-group} by using that 
\begin{equation}
    \label{def:V-rect}
    V(t)= \{u \in \mathcal{U}: \langle \overline{Z}_t, 1_u \rangle = 1 ,  0\leq A_u(t) < L_u\}, \quad 
    W(t)= \{u \in \mathcal{U}: \langle \overline{Z}_t, 1_u \rangle = 1 ,   A_u(t) =  L_u\}.
\end{equation}

\subsection{Connection to the original growing planar network model}
\label{ConnectionMod}


In this section, we explicit  the
connection between the process $\overline Z$ describing the family of rectangles and the spatial structure. We also a justify that it indeed yields the spatial network  described in the introduction. This section is not necessary for the rest of the paper, which analyses the branching process of rectangles and can be skipped. \\

First, it can be noticed that the positions $X_u$ can be obtained from the branching  process $\bar Z$, since $X_u$ is given by the rectangles associated to the ancestors of $u$.  Then, a rectangle $u\in V(t)\cup W(t)$ defined by \eqref{DefRec} can be  associated to a  (rectangular) set $\Omega_u$  of $\mathbb R^2$. More precisely, when $L_u,\ell_u<\infty$, we define
\begin{align}
    \Omega_u&:=\left\{X_{u^-}+\theta L_u\, \overrightarrow{X}_u+\eta \ell_u\textrm{ Rot}_{-\pi/2}\left( \overrightarrow{X}_u\right); \,  \,(\theta,\eta)\in[0,1]^2\right\},\label{def:Omegau}
\end{align}
where $\textrm{ Rot}_{-\pi/2}$ stands for the rotation of vector with angle $-\pi/2$ and 
\begin{equation}\label{def:vecZu}
    \overrightarrow{X}_u =\frac{X_{u}-X_{u^-}}{|X_{u}-X_{u^-}|}.
\end{equation}
The extension of the definition of $\Omega_u$ is straightforward  if $L_u=\infty$ and/or $\ell_u=\infty$. Then  $(\Omega_u)_{u\in V(t)\cup W(t)}$ covers the fourth quadrant in the following sense: 
\begin{prop}
For any $t\geq 0$,
\[
\cup_{u\in V(t)\cup W(t)} \Omega_u=\mathbb R_+\times \mathbb R_- , \qquad \textrm{Int }\Omega_u\cap \textrm{Int }\Omega_v=\emptyset\]
a.s. for any $u\ne v\in V(t)\cup W(t)$.
\end{prop}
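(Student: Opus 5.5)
We argue by induction on the successive branching events, i.e. on the jump times of the rectangle process $\overline Z$. Since branchings occur at a.s. distinct times, with finitely many on any compact time interval (each living rectangle branches at rate at most $1$, and one checks that $\#V(t)$ has finite expectation), the set $V(t)\cup W(t)$ changes only at a locally finite sequence of times. Between two such times the multiset of sets $\Omega_u$, $u\in V(t)\cup W(t)$, is unchanged, because $L_u,\ell_u,X_{u^-},\overrightarrow X_u$ are fixed at birth. Fusions only move a label from $V$ to $W$, so they do not change the family either. It therefore suffices to prove the following two facts:

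\textbf{(a)} the statement holds at $t=0$;

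\textbf{(b)} at a branching time of a rectangle $u\in V(t-)$ with state $(L,\ell,a)$, $a<L$, one has
\[
\Omega_u=\Omega_{u^\rightarrow}\cup\Omega_{u^\curvearrowright},\qquad \mathrm{Int}\,\Omega_{u^\rightarrow}\cap\mathrm{Int}\,\Omega_{u^\curvearrowright}=\emptyset .
\]

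Given (a) and (b), the partition property (covering together with disjoint interiors) propagates: at each branching we replace one piece of the partition by two pieces that partition it.

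For (a), $V(0)=\{(1)\}$ and $L_{(1)}=\ell_{(1)}=\infty$, with $X_{(1)^-}=0$ and direction $(1,0)$. Rotating $(1,0)$ by $-\pi/2$ gives $(0,-1)$, so the extended definition of $\Omega_{(1)}$ is exactly $\mathbb R_+\times\mathbb R_-$.

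For (b), the computation is done in the local frame $(X_{u^-};\overrightarrow X_u,\mathrm{Rot}_{-\pi/2}\overrightarrow X_u)$, where $\Omega_u=[0,L]\times[0,\ell]$, with the obvious conventions when $L$ or $\ell$ is infinite.
\begin{itemize}
\item The straight offspring is born at the branching point $X_u$, at abscissa $a$. It keeps the direction and inherits the width $\ell$ (its orthogonal ancestor is unchanged), and its length is $L-a$, by \eqref{def:tau12}. Hence it is $[a,L]\times[0,\ell]$.
\item The orthogonal offspring is born at the same point with direction $\mathrm{Rot}_{-\pi/2}\overrightarrow X_u$, which is the local second axis. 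By \eqref{def:tau12} its length is $\ell$, its width is $a$ (its orthogonal ancestor is $u$, whose elapsed length is $a$), and its own rotated axis is $\mathrm{Rot}_{-\pi}\overrightarrow X_u=-\overrightarrow X_u$. Hence it is $\{(a-\eta a,\theta\ell)\}=[0,a]\times[0,\ell]$.
\end{itemize}
These two rectangles have disjoint interiors and their union is $[0,L]\times[0,\ell]$, which proves (b).

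The main obstacle is justifying that the label-level quantities coincide with this geometric picture, namely:
\begin{itemize}
\item that $L_{u^\curvearrowright}=\ell_u$, meaning the orthogonal tip is inactivated exactly when it hits the far side of $\Omega_u$, at distance $\ell$;
\item that $L_{u^\rightarrow}=L_u-a$, meaning the straight tip is stopped at the same point as $u$ would have been.
\end{itemize}
For this we prove, jointly in the induction, the invariant that for each active $u$ the boundary of $\Omega_u$ minus the still-growing segment $\{(s,0):s>A_u(t)\}$ already lies in the imprint $\mathcal T$, and that no part of $\mathcal T$ enters $\mathrm{Int}\,\Omega_u$. The orthogonal tip starts on the boundary and points inward. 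By the invariant it meets nothing inside, and it meets $\mathcal T$ precisely on the opposite side, the segment $[0,L]\times\{\ell\}$. This side was laid by the orthogonal ancestor, or is part of $\mathcal T_0$ or of the half-line of an infinite rectangle. The same reasoning gives $T^\infty_{u^\rightarrow}=T^\infty_u$, consistent with the rule $T^\infty_{U_n^\rightarrow}:=T^\infty_{U_n}$ of Section~\ref{subsec:fusion}.

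Finally, the invariant is inherited by both children at each branching. The new segment laid by the orthogonal tip becomes the common side of the two children, which completes the induction. When $\ell=\infty$, the orthogonal tip never stops, and this is consistent with $L_{u^\curvearrowright}=\infty$.
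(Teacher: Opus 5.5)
Your proof is correct and follows the paper's argument: induction over the successive branching times $T_{U_n}$, with fusions leaving $V(t)\cup W(t)$ unchanged, the base case $\Omega_{(1)}=\mathbb R_+\times\mathbb R_-$, and at an active branching the split $\Omega_u=\Omega_{u^\rightarrow}\cup\Omega_{u^\curvearrowright}$ with disjoint interiors; the paper reads this split off \eqref{def:Omegau} and \eqref{def:tau12}, while you verify it explicitly in the local frame. The paper takes $L_{u^\curvearrowright}=\ell_u$ for granted, since it is built into $\tau^\curvearrowright$, so your extra invariant supplying it (close in spirit to property (P) in Lemma~\ref{prop:originalmodel}) goes beyond the paper; however, it must be stated for the imprint built so far, $\mathcal T_{n-1}$, not the full $\mathcal T$, because $\mathcal T$ contains the segments that $u$'s later orthogonal offspring lay across $\mathrm{Int}\,\Omega_u$, so ``no part of $\mathcal T$ enters $\mathrm{Int}\,\Omega_u$'' is false as written.
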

\begin{figure}
\centering
\subcaptionbox{}
{\includegraphics[scale=0.9]{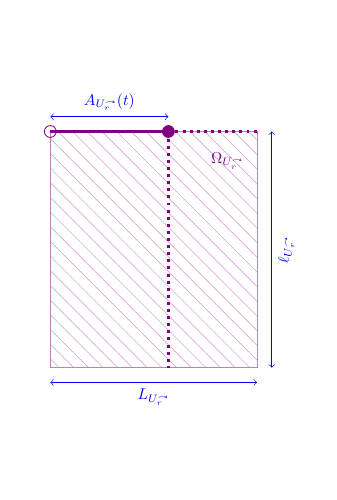}}
\subcaptionbox{}
{\includegraphics[scale=0.9]{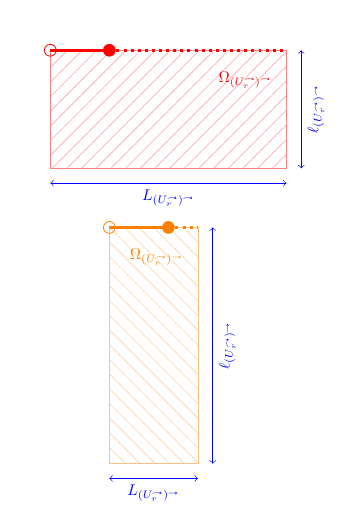}}
\caption{In (a), we represent the rectangles $U_{r}^\curvearrowright$ at time $t$ (see Figure~\ref{fig:sNotationsrectangles} (a)). In (b), we represent the two offspring at time $s>t$ of that rectangle, i.e. the rectangles $(U_{r}^\curvearrowright)^\curvearrowright$ and $(U_{r}^\curvearrowright)^\rightarrow$ (corresponding to Figure~\ref{fig:sNotationsrectangles} (b)).}\label{fig:sNotationsrectangles2}
\end{figure}

\begin{proof}
We use an inductive argument. Thanks to the definition of $(U_n)_{n\in\mathbb N}$, $V(t)\cup W(t)$ is constant for $t\in [T_{U_n},T_{U_{n+1}})$ for $n\in\mathbb N$: no branching event occurs in this time interval, and if a fusion event occurs for $t\in[T_{U_n},T_{U_{n+1}})$, the filament $u$ concerned with this fusion simply shifts from the set of fragments alive to the set of fragments that are fused, that is $V(t)=V(t^-)\setminus\{u\}$, $W(t)=W(t^-)\cup\{u\}$, and then $V(t)\cup W(t)=V(t^-)\cup W(t^-)$. \\
We notice that the desired property holds for $t\in[0,T_{U_1})$, since $V(t)\cup W(t)=\{U_1\}$,  $\Omega_1=\mathbb R_+\times \mathbb R_-$, and the statement of the proposition then holds for $t\in[0,T_{U_1})$. We make the induction assumption that the property holds for $t<T_{U_n}$, for $n\in\mathbb N$. 
\begin{itemize}
    \item If $U_n^-\notin V(T_{U_n}^-)\cup W(T_{U_n}^-)$, then  $U_n^-$ and its progeny are inactivated, which implies that $V(t)$ and $W(t)$ are constant for $t\in[T_{U_{n-1}},T_{U_{n+1}})$. 
    \item If $U_n^-\in W(T_{U_n}^-)$, then $U_n^-\in W(T_{U_n})$ and the progeny of $U_n^-$ are inactivated before birth, implying  that $V(t)$ and  $W(t)$ are constant for $t\in[T_{U_{n-1}},T_{U_{n+1}})$. 
    \item If $U_n^-\in V(T_{U_n}^-)$, then $(U_n)^-$ branches at time $T_{U_n}$ into $(U_n^-)^\rightarrow$ and $(U_n^-)^\curvearrowright$. The set $W(t)$ is   constant for $t\in[T_{U_{n-1}},T_{U_{n+1}})$, while $V(T_{U_n})=\left(V(T_{U_n}^-)\setminus\{(U_n)^-\}\right)\cup\{(U_n^-)^\rightarrow,(U_n^-)^\curvearrowright\}$. Moreover, from definition \eqref{def:Omegau} of $\Omega_u$,  
    \[\Omega_{(U_n)^-}=\Omega_{(U_n^-)^\rightarrow}\cup \Omega_{(U_n^-)^\curvearrowright},\quad \textrm{Int } \Omega_{(U_n^-)^\rightarrow}\cup\textrm{ Int } \Omega_{(U_n^-)^\curvearrowright}\subset \textrm{ Int }\Omega_{(U_n)^-},\]
    \[\textrm{Int } \Omega_{(U_n^-)^\rightarrow}\cap\textrm{ Int } \Omega_{(U_n^-)^\curvearrowright}=\emptyset.\]
  For illustrative argument, we refer to Figure~\ref{fig:sNotationsrectangles}(a)-(b), where the rectangle $\Omega_{U_r^\curvearrowright}$ (in (a)) branches into $\Omega_{(U_r^\curvearrowright)^\rightarrow}$ and $\Omega_{(U_r^\curvearrowright)^\curvearrowright}$ (in (b)). 
\end{itemize}
In these three cases, the statement of the proposition is then satisfied for $t< T_{U_{n+1}}$.
This ends the induction and the proof.
\end{proof}

\begin{figure}
\centering
\subcaptionbox{}
{\includegraphics[scale=1.1]{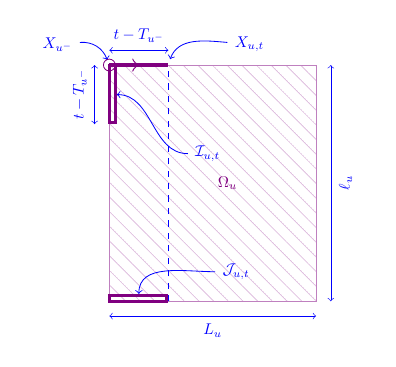}}
\subcaptionbox{}
{\includegraphics[scale=1.1]{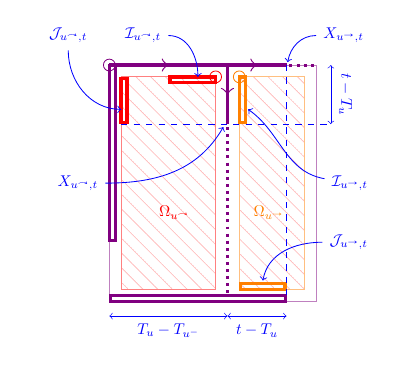}}
\caption{This figure illustrates the proof of Lemma~\ref{prop:originalmodel}. In (a), we represent $\mathcal I_{u,t}$ and $\mathcal J_{u,t}$ for $u\in V(t)$ and $t\geq T_{u^-}$: $u$ satisfies Property (P) if for any time $t\in [T_{u^-},T_u^\infty]$, branches cover the violet double lines $\mathcal I_{u,t}^1$, $\mathcal I_{u,t}^2$, and $\mathcal I_{u,t}^3$, that are part of the edges of rectangle $\Omega_u\subset\mathbb R^2$. In (b), we consider the situation after a branching has occurred at time $T_u$. We still represent in violet double lines the parts of $\mathcal I_{u,t}^2$ the segments covered by branches thanks to the condition (P) satisfied by $u$. The single thick violet line represents the branch generated by the tip $X_{u^\curvearrowright,t}$ that branched from $u$ at time $T_u$. In red (resp. orange) we represent the daughter rectangle $\Omega_{u^\curvearrowright}$ (resp. $\Omega_{u^\rightarrow}$) with a slightly reduced size to enhance the readability of the figure. We represent with double lines the segments that should be covered by branches for $u^\curvearrowright$ (resp. $u^\rightarrow$) to satisfy Property (P). We notice that the red and orange double lines are covered by the violet double lines and the single thick line, showing that if $u$ satisfies (P), then $u^\curvearrowright$ and  $u^\rightarrow$ also satisfy Property (P). }\label{fig:connectionoriginalmodel}
\end{figure}

Next, we show that for any $u\in \cup_{s\geq 0} V(s)$, the rectangle $\Omega_u$ has parts of its edges covered by branches created in the network at a given time. To state this lemma, we introduce the subset of $\mathbb R^2$ covered by network at time $t\geq 0$:
\begin{align*}
    \overline {\mathcal X}(t)&:=\{X_{u,s} \, : \,  (u,s)\in \mathcal N(t)\}\cup (\{0\}\times\mathbb R_-)\\
    &=\left\{X_{u^-}+(\sigma-T_{u^-})\vec X_u\, :\, u\in \cup_{s\geq 0}V(s),\, \sigma\in [T_{u^-},T_{u^-}+L_u]\cap[0,t]\right\}
    \cup \big(\{0\}\times\mathbb R_-\big).
\end{align*}
Notice that we add  $\{0\}\times\mathbb R_-$ to $\overline {\mathcal X}(t)$ to close the quarter plane where the network that we consider in this manuscript is developing. For $u\in\cup_{s\geq 0} V(s)$ and  $t\geq T_{u^-}$, we define two segments $\mathcal I_{u,t},\mathcal J_{u,t}\subset \partial \Omega_u$:
\[\begin{array}{l}
\mathcal I_{u,t}:=\left\{X_{u^-}+(s-T_{u^-})\textrm{Rot}_{-\pi/2}\left(\overrightarrow{X}_u\right); \, s\in[T_{u^-},t \wedge (T_{u^-}+\ell_u)]\right\},\\
\mathcal J_{u,t}:=\left\{X_{u^-}+\ell_u\textrm{Rot}_{-\pi/2}\left(\overrightarrow{X}_u\right)+(s-T_{u^-})\overrightarrow{X}_u;\,s\in[T_{u^-},t \wedge (T_{u^-}+L_u)]\right\},
\end{array}\]
We have represented these segments in violet in Figure~\ref{fig:connectionoriginalmodel} (a). The next lemma shows that the segments $\mathcal I_{u,t}$ and $\mathcal J_{u,t}$ are in the network (i.e. covered by branches) for any $t\geq T_{u^-}$:
\begin{lemme}\label{prop:originalmodel}
For $u\in\cup_{s\geq 0} V(s)$ and $t\geq T_{u^-}$,
\[\mathcal I_{u,t}\cup\mathcal J_{u,t}\subset \overline {\mathcal X}(t) \qquad \textsc{a.s}.\]
\end{lemme}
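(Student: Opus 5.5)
We argue by induction along the genealogy, in the order $(U_n)_{n}$ of increasing birth times, exactly as in the proof of the previous proposition. Call a branch $u\in\cup_{s\ge0}V(s)$ \emph{good} (Property (P)) if $\mathcal I_{u,t}\cup\mathcal J_{u,t}\subset\overline{\mathcal X}(t)$ for every $t\ge T_{u^-}$. Since $\overline{\mathcal X}(t)$ is nondecreasing in $t$ and both segments grow at unit speed from their common corner $X_{u^-}$, it suffices to show that, for each $s$ up to the relevant horizon, the point at parameter $s$ on $\mathcal I_{u,\cdot}$ or $\mathcal J_{u,\cdot}$ is covered by time $s$. The plan is to check (P) for the root and then show that (P) for $u$ implies (P) for $u^\rightarrow$ and $u^\curvearrowright$ whenever these are born, that is, whenever $T_u<T_u^\infty$.

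\textbf{Base case.} For $U_1=(1)$ we have $\ell=\infty$. The segment $\mathcal I$ is $\{0\}\times[-t,0]\subset\{0\}\times\mathbb R_-$, which is included in $\overline{\mathcal X}(t)$ by definition. The segment $\mathcal J$ is degenerate and lies at infinity, so it imposes nothing; equivalently, the actual growing side $[0,t]\times\{0\}$ is the branch $(1)$ itself.

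\textbf{Inductive step.} Let $u$ satisfy (P) and branch at time $T_u$ at the point $X_u=X_{u^-}+(T_u-T_{u^-})\vec X_u$. The two children are handled from the fixed coordinates $(L_u,\ell_u,a)$ with $a=T_u-T_{u^-}$, using \eqref{def:tau12}.

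\begin{itemize}
\item For the straight child $u^\rightarrow$, with rectangle $(L_u-a,\ell_u,0)$ and the same direction, the segment $\mathcal I_{u^\rightarrow,t}$ starts at $X_u$ and runs orthogonally. It is exactly the trace of the orthogonal child $u^\curvearrowright$, which grows at unit speed from $X_u$ in direction $\mathrm{Rot}_{-\pi/2}\vec X_u$ up to length $\ell_u$. The segment $\mathcal J_{u^\rightarrow,t}$ is the part of $\mathcal J_{u,\cdot}$ beyond parameter $a$, so it is covered by (P) for $u$. Its time parametrisation is shifted by $a$ and lags behind that of $\mathcal J_u$, so the timing works.
\item For the orthogonal child $u^\curvearrowright$, with rectangle $(\ell_u,a,0)$ and direction $\mathrm{Rot}_{-\pi/2}\vec X_u$, the segment $\mathcal I_{u^\curvearrowright,t}$ runs from $X_u$ in direction $\mathrm{Rot}_{-\pi}\vec X_u$, i.e.\ back along $u$ to $X_{u^-}$, with length $a$. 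It is therefore covered by the branch $u$ itself, already present. The segment $\mathcal J_{u^\curvearrowright,t}$ is the translate, starting at $X_{u^-}$, of length $\ell_u$, which is precisely $\mathcal I_{u,\cdot}$ and is covered by (P) for $u$.
\end{itemize}

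\textbf{Main obstacle.} The delicate point is the timing: one needs coverage \emph{at time} $t$, not merely eventually, so each comparison must verify that the covering branch reached the relevant point before the child's clock does. For the $\mathcal J$ pieces the covering happened before $T_{u^-}+\text{param}\le T_u+\text{param}$, so this is fine. For $\mathcal I_{u^\rightarrow}$ the covering is synchronous, since the orthogonal child is born at the same time $T_u$ and moves at speed one.

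A second subtlety is that the orthogonal child may be inactivated before reaching length $\ell_u$, or the straight child may die. Here we use the fusion rule: $u^\curvearrowright$ stops exactly when it hits the imprint. By the previous proposition, the rectangles tile the quadrant with disjoint interiors, so $u^\curvearrowright$ runs inside $\Omega_u$ and its first hitting point lies on $\partial\Omega_u$. Hence it stops at distance $\ell_u$, which is consistent with $L_{u^\curvearrowright}$ and $\ell_{u^\rightarrow}$ being equal to $\ell_u$ by \eqref{DefRec}. I expect most of the work to go into making this identification rigorous, i.e.\ that the stopping length of the orthogonal tip equals the width recorded in the rectangle coordinates. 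Infinite sides, on the boundary, are handled by the same argument with $\ell=\infty$, where coverage reduces to the $y$-axis or to an infinite branch.
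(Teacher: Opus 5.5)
Your induction is the paper's proof. The base case is the same: the $y$-axis covers $\mathcal I_{(1),t}$, and $\mathcal J$ is void since $\ell=\infty$. The inductive step uses the same four inclusions: $\mathcal I_{u^\rightarrow,t}$ is the trace of the tip $u^\curvearrowright$; $\mathcal J_{u^\rightarrow,t}\subset\mathcal J_{u,t}$; $\mathcal I_{u^\curvearrowright,t}$ lies on the branch $u$ already built at time $T_u$; and $\mathcal J_{u^\curvearrowright,t}\subset\mathcal I_{u,t-(T_u-T_{u^-})}$.

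\textbf{Justification of $L_{u^\curvearrowright}=\ell_u$.} This is the point to fix. The paper uses this identity without comment: it rewrites $\ell_{u^\rightarrow}$ as $L_{u^\curvearrowright}$ in $\mathcal I_{u^\rightarrow,t}$, and $T_u+L_{u^\curvearrowright}$ as $T_u+\ell_u$ in $\mathcal J_{u^\curvearrowright,t}$. It effectively treats the identity as part of the construction via $\tau^\curvearrowright(L,\ell,a)=(\ell,a,0)$.

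Your derivation of it from the tiling proposition is circular. That proposition's inductive step uses $\Omega_u=\Omega_{u^\rightarrow}\cup\Omega_{u^\curvearrowright}$, and by \eqref{def:Omegau} this decomposition holds only when $L_{u^\curvearrowright}=\ell_u$. Besides, disjointness of rectangle interiors does not by itself keep the imprint out of $\mathrm{Int}\,\Omega_u$, since the imprint also contains the not-yet-grown parts of half lines.

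If you want an actual proof, run a joint induction:
\begin{itemize}
\item For the upper bound, (P) for $u$ at time $T_u$ shows that $X_u+\ell_u\mathrm{Rot}_{-\pi/2}\vec X_u$ is already covered at time $T_u$. This is the point of $\mathcal J_u$ at parameter $T_u-T_{u^-}<L_u$. Hence $L_{u^\curvearrowright}\le\ell_u$; this is essentially the computation in Proposition~\ref{Prop:fusion-non-imprint}.
\item For the reverse inequality, you must carry along the extra inductive hypothesis that nothing of the imprint meets $\mathrm{Int}\,\Omega_u$.
\end{itemize}

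\textbf{Two harmless slips.} First, $\mathcal I_{u,t}$ and $\mathcal J_{u,t}$ do not share the corner $X_{u^-}$: $\mathcal J$ starts at $X_{u^-}+\ell_u\mathrm{Rot}_{-\pi/2}\vec X_u$. Second, $\mathcal J_{u^\rightarrow}$ does not lag behind $\mathcal J_u$. Its point at parameter $q$ is the point of $\mathcal J_u$ at parameter $T_u-T_{u^-}+q$, covered exactly at time $T_u+q$, so the inclusion is tight. The genuine delay $T_u-T_{u^-}$ occurs only for $\mathcal J_{u^\curvearrowright}$.
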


\begin{proof}
For $u\in \mathcal U$, we say that $u$ satisfies the property (P) if $\mathcal I_{u,t}$ and $\mathcal J_{u,t}$ are covered by branches for any $t\geq T_{u^-}$:
\begin{equation*}\label{def:PropP}
    \textrm{(P)}:\,\forall t\geq T_{u^-},\,\left\{\begin{array}{l}
\mathcal I_{u,t}\subset \overline  {\mathcal X}(t),\\
\mathcal J_{u,t}\subset \overline {\mathcal X}(t)\textrm{ if }\ell_u<\infty.
\end{array}\right.
\end{equation*}
Notice that if $\ell_u=\infty$, the segment $\mathcal J_{u,t}$ is located infinitely far, and we do not require that it is covered by branches (it will not be reached by the network anyways in finite time).

\medskip

We use an induction argument to show that property (P) is satisfied for any $u\in\cup_{s\geq 0} V(s)$, thanks to the fact that any $v\in\cup_{s\geq 0}V(s)$ is a descendent of $u=(1)$ through the effect of the operators ${}^\rightarrow$ and ${}^\curvearrowright$. To initiate this induction, we notice that $u=(1)$ satisfies the first line of property (P), since
\[X_{(0)}+(t-T_{u^-})\textrm{Rot}_{-\pi/2}\left(\overrightarrow{X}_{(1)}\right)=(0,0)+(t-T_{u^-})\textrm{Rot}_{-\pi/2}\left(\vec{e}_1\right)\subset\{0\}\times\mathbb R_+\subset\overline {\mathcal X}(t),\]
and the second line of property (P) also holds since $\ell_u=\infty$. Next, let $u\in\cup_{s\geq 0}V(s)$ satisfying (P). In $u$ does not undergoes a fusion event, it does not have descendent and the induction stops. We can therefore assume that $u$ does not undergoes a fusion event, ie such that $T_u<T_u^\infty$, so that we can define $u^\rightarrow\in\cup_{s\geq 0}V(s)$ and $u^\curvearrowright\in\cup_{s\geq 0}V(s)$. We show below that both $u^\rightarrow$ and $u^\curvearrowright$ satisfy condition (P), and we refer to Figure~\ref{fig:connectionoriginalmodel} for a graphic illustration of this induction argument:
\begin{itemize}
    \item For $u^\rightarrow$, we notice that for $t\geq T_u$,
    \begin{align*}
        \mathcal I_{u^\rightarrow,t}&=\left\{X_{u}+(s-T_{u})\textrm{Rot}_{-\pi/2}\left(\overrightarrow{X}_{u^\rightarrow}\right); \, s\in[T_{u},t \wedge (T_u+\ell_{u^\rightarrow})]\right\}\\
        &=\left\{X_{u}+(s-T_{u})\vec X_{u^\curvearrowright}; \, s\in[T_{u},t \wedge (T_u+L_{u^\curvearrowright})]\right\}\\
        &=\left\{X_{u^\curvearrowright,s}; \, s\in[T_{u},t \wedge T_{u^\curvearrowright}^\infty]\right\}\subset \overline {\mathcal X}(t),
    \end{align*}
    \begin{align*}
        \mathcal J_{u^\rightarrow,t}&=\left\{X_{u}+\ell_{u^\rightarrow}\textrm{Rot}_{-\pi/2}\left(\overrightarrow{X}_{u^\rightarrow}\right)+(s-T_{u})\overrightarrow{X}_{u^\rightarrow};\,s\in[T_{u},t \wedge (T_u+L_{u^\rightarrow})]\right\}\\
        &=\Big\{\left(X_{u^-}+(T_u-T_{u^-})\vec X_{u}\right)+\ell_{u^\rightarrow}\textrm{Rot}_{-\pi/2}\left(\overrightarrow{X}_{u^\rightarrow}\right)+(s-T_{u})\overrightarrow{X}_{u^\rightarrow};\\
        &\qquad \,s\in[T_{u},t \wedge (T_{u^-}+L_{u})]\Big\}\\
        &=\left\{X_{u^-}+\ell_{u^\rightarrow}\textrm{Rot}_{-\pi/2}\left(\overrightarrow{X}_{u}\right)+(s-T_{u^-})\overrightarrow{X}_{u};\,s\in[T_{u},t \wedge(T_{u^-}+ L_{u})]\right\}\subset \mathcal J_{u,t}\\
        &\subset \overline {\mathcal X}(t).
    \end{align*}
    Therefore $u^\rightarrow$ satisfies property (P).
    \item For $u^\curvearrowright$, we notice that  for $t\geq T_u$,
    \begin{align*}
        \mathcal I_{u^\curvearrowright,t}&=\left\{X_{u}+(s-T_{u})\textrm{Rot}_{-\pi/2}\left(\overrightarrow{X}_{u^\curvearrowright}\right); \, s\in[T_{u},t \wedge (T_{u}+\ell_{u^\curvearrowright})]\right\}\\
        &=\left\{X_{u}-(s-T_{u})\overrightarrow{X}_{u}; \, s\in[T_{u},t \wedge (T_{u}+(T_u-T_{u^-})]\right\}\\
        &\subset \left\{X_{u^-}+(s-T_{u^-})\overrightarrow{X}_{u}; \, s\in[T_{u^-},T_{u}]\right\}\subset \overline {\mathcal X}(T_u)\subset \overline {\mathcal X}(t),
    \end{align*}
    \begin{align*}
        \mathcal J_{u^\curvearrowright,t}&=\left\{X_{u}+\ell_{u^\curvearrowright}\textrm{Rot}_{-\pi/2}\left(\overrightarrow{X}_{u^\curvearrowright}\right)+(s-T_{u})\overrightarrow{X}_{u^\curvearrowright};\,s\in[T_{u},t \wedge (T_{u}+L_{u^\curvearrowright})]\right\}\\
        &=\left\{X_{u}-(T_u-T_{u^-})\overrightarrow{X}_{u}+(s-T_{u})\textrm{Rot}_{-\pi/2}\left(\overrightarrow{X}_{u}\right);\,s\in[T_{u},t \wedge (T_{u}+\ell_{u})]\right\}\\
        &=\left\{X_{u^-}+(\tilde s-T_{u^-})\textrm{Rot}_{-\pi/2}\left(\overrightarrow{X}_{u}\right);\,\tilde s\in[T_{u^-},(t-(T_u-T_{u^-})) \wedge (T_{u^-}+\ell_{u})]\right\}\\
        &\subset \mathcal I_{u,t-(T_u-T_{u^-})}\subset \overline {\mathcal X}(t-(T_u-T_{u^-}))\subset \overline {\mathcal X}(t).
    \end{align*}
    Therefore $u^\curvearrowright$ satisfies property (P).
\end{itemize}
This concludes the induction argument and the proof.

\end{proof}

In Section~\ref{subsec:fusion}, we described the half line $[u\rangle$ associated to any active tip $u\in\cup_{s\geq 0} V(s)$, and the imprint, which is the union of the branches already built and the half lines associated to each active tip (see \eqref{def:inprint}). In the proposition below, we show that for any active tip $u\in\cup_{s\geq 0} V(s)$, the point $X_{u,T_u^\infty}$ where the half line $[u\rangle$ encounters the imprint belongs to $\overline {\mathcal X}(t)$ for $t\geq T_{u^-}$. This shows that $X_{u,T_u^\infty}$ is part of  a branch for any $t\geq T_{u^-}$. In particular, any fusion event occurs at a point of the imprint that is already covered by branches at the time of the fusion.

\begin{prop}\label{Prop:fusion-non-imprint}
For any $t\geq 0$ and $u\in \mathcal U$, on the event
$\{u\in\cup_{s\geq 0} V(s), L_u<\infty,   T_{u^-} \leq t, T_u^\infty<\infty\}$,  $X_{u,T_u^\infty}$ belongs a.s. to   $\overline {\mathcal X}(t)$.
\end{prop}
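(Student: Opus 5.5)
The plan is to prove a strengthened version of property (P) by the same induction over $\mathcal U$ as in the proof of Lemma~\ref{prop:originalmodel}. For $u\in\cup_{s\geq 0}V(s)$ with $L_u<\infty$, the definition $L_u=T^\infty_u-T_{u^-}$ gives
\[X_{u,T_u^\infty}=X_{u^-}+L_u\overrightarrow{X}_u,\]
which is the far corner of $\Omega_u$ along its growth direction. Since $t\mapsto\overline{\mathcal X}(t)$ is nondecreasing, it suffices to prove
\[\textrm{(Q)}:\quad L_u<\infty\ \Longrightarrow\ X_{u^-}+L_u\overrightarrow{X}_u\in\overline{\mathcal X}(T_{u^-}).\]
This is exactly the statement at the birth time $t=T_{u^-}$, hence for every $t\geq T_{u^-}$. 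We prove (Q) jointly with (P) for every $u$, using that each $v\in\cup_{s\geq0}V(s)$ is obtained from $(1)$ by the operations ${}^\rightarrow$ and ${}^\curvearrowright$.

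\emph{Base case.} The root $(1)$ has $L_{(1)}=\infty$, so (Q) holds trivially.

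\emph{Inductive step.} Let $u$ satisfy (P) and (Q), and assume $T_u<T_u^\infty$, so that both children exist; they are born at time $T_u$.

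\emph{Straight child $u^\rightarrow$.} It continues on the same line, so $\overrightarrow{X}_{u^\rightarrow}=\overrightarrow{X}_u$ and $T^\infty_{u^\rightarrow}=T^\infty_u$. Hence $L_{u^\rightarrow}<\infty$ exactly when $L_u<\infty$, and
\[X_u+L_{u^\rightarrow}\overrightarrow{X}_{u^\rightarrow}=X_{u^-}+L_u\overrightarrow{X}_u.\]
By (Q) for $u$ this point lies in $\overline{\mathcal X}(T_{u^-})\subset\overline{\mathcal X}(T_u)$.

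\emph{Orthogonal child $u^\curvearrowright$.} It starts at $X_u=X_{u^-}+a\overrightarrow{X}_u$, where $a=T_u-T_{u^-}$, and moves in the direction $\mathrm{Rot}_{-\pi/2}(\overrightarrow{X}_u)$. By the Proposition on the partition into the $\Omega_v$, the sets $\Omega_{u^\rightarrow}$ and $\Omega_{u^\curvearrowright}$ tile $\Omega_u$ without overlapping interiors. The orthogonal cut through $X_u$ must therefore span the full width of $\Omega_u$, which gives $L_{u^\curvearrowright}=\ell_u$. In particular, $L_{u^\curvearrowright}<\infty$ forces $\ell_u<\infty$. The far corner of $u^\curvearrowright$ is then
\[X_{u^-}+\ell_u\,\mathrm{Rot}_{-\pi/2}(\overrightarrow{X}_u)+a\overrightarrow{X}_u .\]
This is the point of $\mathcal J_{u,T_u}$ with parameter $s=T_u$. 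That parameter is admissible because $a\leq L_u$ and $s\leq T_u$. Property (P) for $u$ (Lemma~\ref{prop:originalmodel}) gives $\mathcal J_{u,T_u}\subset\overline{\mathcal X}(T_u)$, so (Q) holds for $u^\curvearrowright$.

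Property (P) for both children was already established in the proof of Lemma~\ref{prop:originalmodel}, so the induction closes.

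The main obstacle I expect is the identity $L_{u^\curvearrowright}=\ell_u$, that is, the orthogonal tip fuses exactly on the opposite side of $\Omega_u$ and not before. The fallback argument is geometric. Just after $T_u$, the tip $X_{u^\curvearrowright,\cdot}$ moves through the interior of $\Omega_u$. By the partition Proposition, that interior meets no other rectangle, hence no branch and no imprint segment belonging to any other $\Omega_v$. So the first encounter with $\mathcal T_{n-1}$ can only occur on $\partial\Omega_u$, namely at the side at distance $\ell_u$, which lies in the imprint because $\mathcal J_{u}$ is built along $u$'s growth direction. If $\ell_u=\infty$, no such side exists, $L_{u^\curvearrowright}=\infty$, and (Q) is vacuous. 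One must also check the degenerate cases of ties in the ordering $(U_n)$, but these have probability zero.
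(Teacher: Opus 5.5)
Your proof is correct and is essentially the paper's argument. The paper shows that a fusing tip $v$ stops at the point of $\mathcal J_{v^\perp,T_{v^\perp}}$ with parameter $T_{v^\perp}$, which is your orthogonal-child step with $u=v^\perp$, and then transfers this to every $w$ on the same straight line via $X_{w,T_w^\infty}=X_{v,T_v^\infty}$, which is your straight-child step; you package the same two steps as an induction on (Q).

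The identity $L_{u^\curvearrowright}=\ell_u$ that you flag is also used tacitly by the paper, both when it writes $T_v^\infty-T_{v^\perp}=\ell_{v^\perp}$ and in the proof of Lemma~\ref{prop:originalmodel}. However, deriving it from the partition Proposition is circular: that Proposition's proof already assumes $\Omega_u=\Omega_{u^\rightarrow}\cup\Omega_{u^\curvearrowright}$, which is exactly this identity.
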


\begin{figure}
\centering
{\includegraphics[scale=1.1]{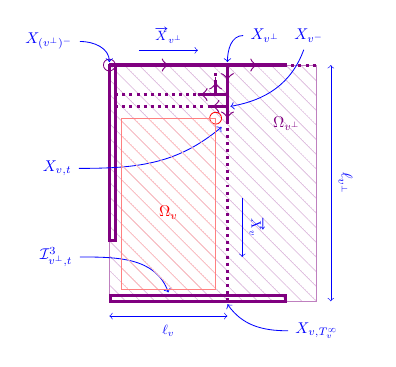}}
\caption{ In this picture we represent the fusion of the active tip $X_{v,t}$ with the edge of rectangle $\Omega_{v^\perp}$. We notice that  the point where the tip $X_{v,t}$ fuses belongs to $\mathcal J_{v^\perp,t}$, which is covered by branches at time $t$ since $v^\perp$ satisfies (P). }\label{fig:connectionoriginalmodel2}
\end{figure}

\begin{rque} Since fusions happen only when  an active tip encounters a branch already present, the the process we have introduced in Section~\ref{subsec:skeleton},~\ref{subsec:fusion} and \ref{subsec:rectangles} is actually identical to the growing planar network process described in the introduction.
\end{rque}

\begin{proof}
Let $\bar t> 0$ the time of a fusion event, and let $v$ the label of the active tip that is undergoing this fusion event. We define $u:=v^\perp$, and refer to Figure~\ref{fig:connectionoriginalmodel2} for an illustration of the notations used in this proof. The fusion is then happening at time $T_v^\infty$, at location 
\begin{align*}
    X_{v,T_v^\infty}&=X_{v^-}\frac{T_v-T_v^\infty}{T_v-T_{v^-}}+X_v\frac{T_v^\infty-T_{v^-}}{T_v-T_{v^-}}=X_{v^-}+\left(T_{v}^\infty-T_{v^-}\right)\frac{X_v-X_{v^-}}{|X_v-X_{v^-}|}\\
    &=\left(X_{v^\perp}+(T_{v^-}-T_{v^\perp})\overrightarrow{X}_v\right)+\left(T_{v}^\infty-T_{v^-}\right)\overrightarrow{X}_{v}\\
    &=X_{v^\perp}+\left(T_{v}^\infty-T_{v^\perp}\right)Rot_{-\pi/2}\left(\overrightarrow{X}_{v^\perp}\right)=X_{v^\perp}+\ell_{v^\perp}Rot_{-\pi/2}\left(\overrightarrow{X}_{v^\perp}\right).
\end{align*}
Therefore,
\begin{align*}
    X_{v,T_v^\infty}&=\left(X_{(v^\perp)^-}+(X_{v^\perp}-X_{(v^\perp)^-})\right)+\ell_{v^\perp}Rot_{-\pi/2}\left(\overrightarrow{X}_{v^\perp}\right)\\
    &=X_{(v^\perp)^-}+\ell_{v^\perp}Rot_{-\pi/2}\left(\overrightarrow{X}_{v^\perp}\right)+\left(T_{v^\perp}-T_{(v^\perp)^-}\right) \overrightarrow{X}_{v^\perp},
\end{align*}
which proves that $X_{v,T_v^\infty}\in \mathcal J_{v^\perp,T_{v^\perp}}\subset \overline {\mathcal X}(T_{v^\perp})$, thanks to  Lemma~\ref{prop:originalmodel}. Then, $X_{v,T_v^\infty}\in \overline {\mathcal X}(t)$ for any $t\geq T_{v^-}$. The property stated in the proposition then holds for any $v\in \cup_{s\geq 0} V(s)$ that undergoes a fusion event: $X_{v,T_v^\infty}\in \overline {\mathcal X}(T_{v^\perp})\subset \overline {\mathcal X}(T_{v^-})$. 

\medskip

We consider now $w\in\cup_{s\geq 0} V(s)$. There exist then $v=(\dots(u^\rightarrow)\dots)^\rightarrow$ that undergoes a fusion event, with $X_{v,T_v^\infty}=X_{u,T_u^\infty}$. We can define $u=v^\perp=w^\perp$, that satisfies $T_{v^\perp}\leq T_w$. Thanks to the first part of this proof, 
\[X_{w,T_w^\infty}=X_{v,T_v^\infty}\in \overline {\mathcal X}(T_{v^\perp})\subset \overline {\mathcal X}(T_{w}),\]
which completes the proof of the proposition.
\end{proof}



\section{Spinal construction for finite rectangles}
\label{spinal}

We recall that we denote by $Z$  the empirical measure of rectangles, i.e.  marginal on $\mathcal{X}$  of the process $\overline{Z}$ introduced in Section \ref{subsec:rectangles}:
$$Z_t=\sum_{u\in V(t)\cup W(t) } \delta_{(L_u, \ell_u, A_u(t))}.$$
In this section, we focus on finite rectangles and we study the first moment of the associated branching process, defined  by
$$M_tf(x)=\E_{\delta_z}(Z_t(f))$$
for any $t\geq 0$ and $z=(L,\ell,a)\in \R_+^3$ such that $a\leq L$ and $f : \mathbb R^3\rightarrow \R$ measurable and bounded. 
At the fragmentation events, the total surface of offspring is preserved and  we  consider the harmonic function giving the surface of rectangles $$h(z)=h(L,\ell,a)= \ell \times L$$ for $z=(L,\ell,a)\in \mathcal \R_+^3$. More precisely, $M_th=h$ for any $t\geq 0$ and proceed with the  (Doob) $h$  transform.
That corresponds to the classical size biased choice or tagged particle for  fragmentation processes, and we refer e.g. to the works of Bertoin and Haas and the monographs \cite{Bertoin, DJ}.
The dynamics of this typical rectangle is a Markov process 
$$Y_t=(L_t,\ell_t, A_t)$$
taking values in $\R_+^3$, whose    semigroup $P$ is  defined 
$$P_tf(z)=\frac{M_t (fh)(z)}{h(z)}=\E_z(f(Y_t)).
$$
for any $t\geq 0$ and $z=(L,\ell,a)\in \R_+^3$ such that $a\leq L$ and $f : \mathbb R^3\rightarrow \R$ measurable and bounded. Writing $G$ the generator of the semigroup $M$, 
the generator $\mathcal L$ of the Markov process $Y$  reads 
\begin{align}
\mathcal{L}f(z) &=\frac{G(hf)}{h}(z)=  1_{a< L} \left(\frac{L-a}{L}f(\tau^\rightarrow(z))+\frac{a}{L}f(\tau^\curvearrowright(z))-f(z)+ \frac{\partial f}{\partial a}(z)\right), \label{genetyp}
\end{align}
where $\tau^\rightarrow$ and $\tau^\curvearrowright$ are defined in \eqref{def:tau12} and $f$ is a bounded continuous function from $\R_+^3$ to $\R$ and  of class $C^1$ with respect to the third variable (age variable).
The process $Y$ represents the evolution of a typical rectangle in our population. Notice that when the age coordinate $A$ reaches $L$ the  length and width of $Y$ do not evolve anymore. When this happens (inactivation), as before we  say that $Y$ is \textit{frozen} and  the time of this event is
$$\tau:= \inf\{t\geq 0 : A_t\geq L_t\} \in \R_+ \qquad \text{a.s.}.$$
\subsection{Coupling  with  stick-breaking processes }
The spinal rectangle $Y$ constructed above lives in $\R_+^3$. We can 
decouple the two  spatial dimensions and reduce the process to a stick-breaking, i.e. a one dimensional aging fragment described by a Markov process living in $\R_+^2$. It is due in particular
to  the symmetry of the growth: growing to the right or the left does not change the fragmentation process and one can just wait that the again starts again along each dimension.  

More precisely,  we  construct the process $Y=(L_t,\ell_t, A_t)_{t\geq 0}$ starting 
from $(L, \ell,a)$
by considering two independent Markov  processes $Z^1=(L_t^1,A_t^1)_{t\geq 0}$ and $Z^2=(L_t^2,A_t^2)_{t\geq 0}$ in $\R_+^2$ with respective initial conditions $(L,a)$ and $(\ell,0)$ :
$$Z_0^1=(L,a), \qquad Z_0^2=(\ell,0), \qquad (0\leq a < L).$$
The processes $Z^1$ and $Z^2$ follow the same law: the age grows linearly until reaching the length, while  the length and age jump simultaneously at rate one with a size biased law for the length and a
reboot at $0$ for the age. More formally, the  generator $\mathcal A$ of $Z^1$ and $Z^2$ is given for any non negative numbers $a\leq L$ by
\begin{align}
\label{defZ1Z2gen}
    \mathcal{A}f(L,a)&
    = 1_{a< L} \left(\frac{\partial f}{\partial a}(L,a)+\frac{L-a}{L}f(0,L-a)+\frac{a}{L}f(0,a)-f(L,a)\right),
    \end{align}
where again $f$ is bounded continuous  on $\R_+^2$ and $C^1$ with respect to the second  variable (age). \\

Let us detail the coupling construction and refer to Figure \ref{fig:Y-Z1-Z2} for illustration. We construct $Y$ using the independent stick braking process $Z^1$ and $Z^2$ given above, by defining $Y$ on successive time intervals starting from $0$. First, 
$$Y_t=(L_t,\ell_t, A_t)=(L_t^1,L_0^2,A_t^1)$$
for $t<T_1$, where  $T_1$ is the first jump time of   $Z^1$ where $Z^1$ selects the left fragment at the stick breaking. It corresponds to a jump from  $Z^1_{t-}=(L_{t-}^1,A_{t-}^1)$
to $Z^1_{t}=(A_{t-}^1, 0)=(L_t^1, A_t^1)$ and can be a.s.  defined by
$$T_1:=\inf\{t \geq 0 :  L_t^1=A_{t-}^1\}$$
by noticing the branching a.s. occurs outside the middle of $[0,L_t^1]$.
Then the process evolves following the second dimension and for $t\in [0,T_2]$,
$$Y_{T_1+t}=(L_t^2, L_{T_1}^1, A_t^2),$$
where $T_2$ is the first time when $Z^2$ selects the left fragment 
$$T_2=\inf\{t \geq 0 :  L_t^2=A_{t-}^2\}.$$
At this time, we come back to the first dimension and repeat the procedure, using again $Z^1$, from the time $T_1$ we have left it. Repeating this procedure allows us to construct $Y$ on $\R_+$ from the paths of
$Z^1$ and $Z^2$. We can then  control the freezing time and the size of the rectangles from the corresponding quantities for the one dimensional fragments.
\begin{prop} Writing 
$$\tau_i=\inf\{t\geq 0 :  A_t^i\geq L_t^i\}$$ for $i\in \{1,2\}$, we have
\label{prop:YtoZatFreeze}
$$ \tau\leq \tau_1+ \tau_2, \quad L_{\tau} \geq L^1_{\tau_1}, \quad \ell_{\tau}\geq L^2_{\tau_2} \qquad \text{a.s.} $$
\end{prop}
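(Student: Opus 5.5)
The plan is to read the three inequalities off the coupling construction, by following how the clock of $Y$ is split between the clocks of $Z^1$ and $Z^2$. Let $0=S_0<S_1<S_2<\dots$ be the switching times of $Y$. On $[S_{2k},S_{2k+1})$, $Y$ runs $Z^1$, and on $[S_{2k+1},S_{2k+2})$ it runs $Z^2$. Each switch is a jump of the running component that selects the left fragment, i.e. a jump to $(A_{t-},0)$. The switch turns $Y$: the old age becomes the width, and the length is now taken from the other component. Write $\sigma^1(t)$ and $\sigma^2(t)$ for the internal clocks of $Z^1$ and $Z^2$ used up to $Y$-time $t$. Then $\sigma^1(t)+\sigma^2(t)=t$, and on a $Z^i$-excursion we have $L_t=L^i_{\sigma^i(t)}$ and $A_t=A^i_{\sigma^i(t)}$.

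The first step checks that the construction is consistent with the generator \eqref{genetyp}. At the switching time the new $Y$-state is $(L^{j}_{\sigma^j},\, L^i_{\sigma^i},\, A^j_{\sigma^j})$. I will verify that $L^i_{\sigma^i}=A^i_{\sigma^i-}$, which equals the age of $Y$ just before the jump, as required by $\tau^\curvearrowright(L,\ell,a)=(\ell,a,0)$. I will also verify that the resumed component is at a state $(L^j,A^j)$ with $L^j$ equal to the previous width. The reason is that when $Z^j$ was left, it had just jumped to $(A^j_-,0)$, which is exactly the width that was frozen into $Y$. Right-fragment jumps of the running component give $(L-a,\ell,0)$ with the correct probability $(L-a)/L$. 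This is a bookkeeping induction over the switches, and I expect it to be routine.

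The second step handles freezing. Freezing of $Y$ occurs exactly when the running component $Z^i$ freezes: no jump can occur after age reaches length, and ages and lengths coincide during the excursion. So $\tau$ is the first $Y$-time at which, for some $i$, $\sigma^i(t)=\tau_i$ while $Z^i$ is running. Before $\tau$, each $\sigma^i(t)\le\tau_i$. Indeed, if $\sigma^i$ had exceeded $\tau_i$, then $Z^i$ would have been running at internal time $\tau_i$ and would have frozen $Y$ there. Since $Z^i$ is frozen from $\tau_i$ on, it performs no switching jump after $\tau_i$, so $Z^i$ cannot be left while passing $\tau_i$. Hence $\tau=\sigma^1(\tau)+\sigma^2(\tau)\le\tau_1+\tau_2$.

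The third step compares the frozen sizes, and this is the delicate point. Suppose $Y$ freezes on a $Z^1$-excursion, so $\sigma^1(\tau)=\tau_1$. Then $L_\tau=L^1_{\tau_1}$, and the width is $\ell_\tau=L^2_{\sigma^2(\tau)}$. Here $\sigma^2(\tau)\le\tau_2$, and $Z^2$ was left at a switching jump, whose post-jump state is $(A^2_-,0)$. I need $L^2_{\sigma^2}\ge L^2_{\tau_2}$, which I get from the monotonicity of lengths: every jump of $Z^i$ replaces $L$ by either $L-a$ or $a$, both at most $L$. So $t\mapsto L^i_t$ is nonincreasing, and $\sigma^2\le\tau_2$ gives $\ell_\tau\ge L^2_{\tau_2}$. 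The symmetric case, freezing on a $Z^2$-excursion, gives $\ell_\tau=L^2_{\tau_2}$ via the role swap and $L_\tau\ge L^1_{\tau_1}$ in the same way.

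I expect the main obstacle to be stating the alternation carefully, namely which coordinate of $Y$ holds which component at each switch. A clean way is to prove by induction on $k$ the invariant that at $S_k$ the pair $(L_{S_k},\ell_{S_k})$ consists of the current lengths of the two components, in the order running/paused. Once this invariant holds, monotonicity of the lengths and the additivity of the clocks give all three inequalities at once.
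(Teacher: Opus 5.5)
Your approach is the paper's: split the clock of $Y$ between the internal clocks of $Z^1$ and $Z^2$, read off $\tau=\sigma^1(\tau)+\sigma^2(\tau)\le\tau_1+\tau_2$, and use that $t\mapsto L^i_t$ is nonincreasing. Your version is more detailed than the paper's. Your first two steps are correct, and so is the case where $Y$ freezes on a $Z^1$-excursion.

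\textbf{The gap is the symmetric case.} Use your own identification ($L_t=L^i_{\sigma^i(t)}$ on a $Z^i$-excursion) and your own invariant ($(L,\ell)$ is ordered running/paused). If $Y$ freezes on a $Z^2$-excursion, these give
\[
L_\tau=L^2_{\tau_2},\qquad \ell_\tau=L^1_{\sigma^1(\tau)}\ge L^1_{\tau_1}.
\]
The roles are therefore swapped relative to the claim, and no further ``role swap'' moves $L^2$ back into the width coordinate. This cannot be repaired, because on this event the inequalities as stated fail with positive probability.

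For example, start from $Y_0=(10,\tfrac1{10},0)$. With positive probability the following three things happen.
\begin{itemize}
\item $Z^1$ first jumps at some time $s\in[4,6]$ and chooses the left fragment, so $Y_s=(\tfrac1{10},s,0)$.
\item $Z^2$ has no jump on $[0,\tfrac1{10}]$. Then $\tau_2=\tfrac1{10}$ and $L^2_{\tau_2}=\tfrac1{10}$, and $Y$ freezes at time $s+\tfrac1{10}$ with $L_\tau=\tfrac1{10}$.
\item Independently, $Z^1$ has no further jump during an internal time $s$. Then $L^1_{\tau_1}=s\ge 4>L_\tau$.
\end{itemize}

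\textbf{What your argument actually proves.} It gives $\tau\le\tau_1+\tau_2$, and almost surely one of two alternatives.
\begin{itemize}
\item If the number of switches before $\tau$ is even, then $L_\tau=L^1_{\tau_1}$ and $\ell_\tau\ge L^2_{\tau_2}$.
\item If it is odd, then $L_\tau=L^2_{\tau_2}$ and $\ell_\tau\ge L^1_{\tau_1}$.
\end{itemize}
Equivalently, $Z^1$ always carries the side of $Y$ parallel to the initial length direction. The comparison holds side by side in space, not for the labels length and width. In both cases $L_\tau\ell_\tau\ge L^1_{\tau_1}L^2_{\tau_2}$.

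This product inequality, together with $\tau\le\tau_1+\tau_2$, is all that is used later in the proof of Proposition \ref{job}. Part i) only needs the bound on $\tau$. Part ii) only needs the product, because $x\mapsto 1_{\{x<\varepsilon\}}/x$ is nonincreasing.

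The paper's own short proof (``the sizes of the rectangle have less time to reduce'') passes over the same point. You should state and prove this corrected version rather than the literal one.
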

\begin{proof}
We remark that the rectangle is frozen at time $\tau$ when the age reaches the boundary in one of the two dimensions. The two sizes of the rectangle stop simultaneously their evolution  at this time,   while $Z^1$ and $Z^2$ evolve (and freeze) independently. At time $\tau_1+\tau_2$, the two dimensions have frozen in our construction of $Y$ and   $\tau$ is smaller,  see Figure \ref{fig:Y-Z1-Z2}. For this reason, the sizes of the rectangle have less time to reduce than $(Z^1,Z^2)$, ensuring that $L_{\tau} \geq L^1_{\tau_1}$ and  $\ell_{\tau}\geq L^2_{\tau_2}$.
\end{proof}

\begin{figure}
    \centering
    \includegraphics[scale=3.7]{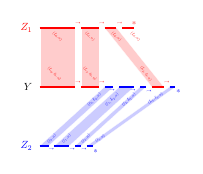}
    \caption{Construction of the process $Y$ from $Z_1$ and $Z_2$. Freezing are indicated by stars;   the choice of the right fragment  in the stick breaking is indicated by  $\rightarrow$ and the choice   left fragment by $\curvearrowright$.    }
    \label{fig:Y-Z1-Z2}
\end{figure}

\subsection{Freezing time and small fragments for stick process} 
We study the stick breaking process $Z^1=(L_t^1,A_t^1)_{t\geq 0}$ which appeared in the previous section. This Markov process on $\R_+^2$ is a fragmentation process with aging and freezing, defined by the generator $\mathcal{A}$ given in \eqref{defZ1Z2gen}. 
In words, the  age  of the stick grows at speed  $1$ and as long as it is smaller than the length of the stick, it splits at rate $1$. If a stick of size $L$ splits at age $a$, the new stick has length $L$ (and age $0$) with probability $a/L$ and otherwise it has length $L-a$ (and age $0$). 
We study the freezing time
$\tau_1$ of this stick process, which has been  defined by
$\tau_1=\inf\{t \geq 0 : L_t^1\geq A_t^1\}.$
We first give a large deviation estimate controlling the tail of this freezing time. It will be useful to quantify the speed of convergence of the spinal rectangle. We then control the size of the stick at freezing, which will be useful in particular to control the small fragments in the spinal rectangle.
\begin{prop} \label{Gel}
There exist $C_F,c_F>0$ such that for any $L\geq 0,a\in [0,L]$ and $t\geq 0$,
$$\Pp_{(L,a)}(\tau_1 \geq t)\leq C_F\exp\left(-c_F\frac{t}{L\vee 1}\right).$$
\end{prop}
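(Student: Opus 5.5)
The plan is to prove a linear bound on the mean freezing time,
$$\sup_{a\leq L'\leq L}\E_{(L',a)}(\tau_1)\leq C_0\,(L\vee 1),$$
and then turn it into the exponential tail by the Markov property. Note that $\tau_1$ here is the first time the age reaches the length, $\inf\{t : A^1_t\geq L^1_t\}$.

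The key structural fact is that the length of the stick is non-increasing along the dynamics of $\mathcal A$. Indeed, after a split at age $a$ the new length is $L-a$ or $a$, both at most $L$. Hence from any state $(L',a)$ with $L'\leq L$, the process keeps length at most $L$ forever. Suppose the mean bound holds. Markov's inequality gives $\Pp_{(L',a)}(\tau_1\geq 2C_0(L\vee1))\leq 1/2$ uniformly over such states. Applying the Markov property at the times $2kC_0(L\vee 1)$ then yields $\Pp_{(L,a)}(\tau_1\geq 2kC_0(L\vee 1))\leq 2^{-k}$. This gives the statement with $c_F=\log 2/(2C_0)$ and $C_F=2$.

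To get the mean bound, I would work with the embedded chain at the jump (split) times. Each inter-jump period lasts at most an $\mathrm{Exp}(1)$ time, so it has mean at most $1$. It therefore suffices to bound the expected number of jumps before freezing by $C(L\vee 1)$; a Wald-type argument then bounds the time. Start from $(L',0)$. With probability $e^{-L'}$ the stick freezes before splitting. Otherwise it splits at an age $a$ distributed as $\mathrm{Exp}(1)$ conditioned on $a<L'$, and the new length is $L'-a$ with probability $(L'-a)/L'$ and $a$ with probability $a/L'$. The expected new length is
$$\E\Big[\frac{(L'-a)^2+a^2}{L'}\Big]=L'-2\E[a]+\frac{2\E[a^2]}{L'}.$$
For $L'\geq L_0$ with $L_0$ a large constant, this is at most $L'-1$, say, because $\E[a]\to1$ and $\E[a^2]$ stays bounded. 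The first step starts at a positive age $a$, but since lengths only decrease it can be absorbed into the constant.

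So $V(L)=L$ is a Lyapunov function for the jump chain with drift $\leq -1$ outside $[0,L_0]$. Optional stopping then shows that the expected number of jumps before the length falls below $L_0$ (or freezing occurs) is at most $L+1$. Once the length is below $L_0$, every fresh start freezes before the next split with probability at least $e^{-L_0}$. The number of further jumps is therefore dominated by a geometric variable with mean at most $e^{L_0}$, and lengths cannot grow back. Combining the two phases gives an expected number of jumps at most $L+1+e^{L_0}\leq C(L\vee1)$, hence the mean time bound.

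The main obstacle is the drift computation: making the bound $\E[\text{new length}]\leq L'-1$ uniform for $L'\geq L_0$ with the truncated exponential, and checking that applying optional stopping to an unbounded-horizon chain is justified. For the latter I would stop at $n\wedge N$ and let $n\to\infty$ using monotone convergence, since $V\geq 0$. An alternative if the drift estimate turns out messy is a direct generator computation with $V(L,a)=\alpha L-a$, which gives $\mathcal A V\leq -1+a\,1_{L-a<L/(2\alpha)}$. The bad region, age close to the length, would then have to be handled separately by noting that freezing is imminent there.
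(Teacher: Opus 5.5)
Your proof is correct, but it follows a different route from the paper.

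\textbf{The paper's route.} The paper works directly with exponential moments. For $L\le 1$ it dominates $\tau_1$ by $L$ times a geometric variable, since freezing before any split has probability at least $e^{-1}$. For $L\ge 1$ it uses that a split at any time in $[L/4,L/2]$ forces the length below $3L/4$. This gives $\tau_1\le \sigma+L\sum_{i\ge 0}(3/4)^i\,\mathrm{Geom}_i(\alpha)$ with $\alpha\ge 1-e^{-1/4}$. It concludes by a Chernoff bound with $\lambda=c/L$; the infinite product of geometric moment generating functions is finite thanks to the factors $(3/4)^i$.

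\textbf{Your route.} You only prove a first-moment bound $\E_{(L',a)}\tau_1\le C(L'\vee 1)$. This comes from three ingredients:
\begin{itemize}
\item a Foster--Lyapunov drift of $V(L)=L$ along the embedded jump chain, at most $-1$ above $L_0$, because $\E[a\mid a<L']$ increases to $1$ and $\E[a^2\mid a<L']\le 2$;
\item a geometric phase once the length is below $L_0$;
\item the Wald-type bound $\E\tau_1\le \E N+1$.
\end{itemize}
You then upgrade this to an exponential tail by iterating Markov's inequality at multiples of $2C_0(L\vee 1)$. That iteration is legitimate exactly because the length never increases, so every state visited from $(L,a)$ satisfies the same uniform mean bound.

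\textbf{What each buys.} Your argument is more robust. It makes no pathwise claim about a fixed time window, and an arbitrary initial age costs only one extra jump. The paper's argument is written for initial age $0$. Its constant $\alpha$, defined as an infimum over initial ages, only works if freezing before time $L/2$ is counted as a success: for $a=L$ the stick is already frozen with length $L$. The paper's route, in return, gives the exponential moment directly. It also gives a transparent multiscale picture of $\tau_1$ as a geometric sum of time scales $L(3/4)^i$. Your constants $C_F=2$, $c_F=\log 2/(2C_0)$ are cruder, which does not matter for the statement.
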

Let us observe  before going to the proof  that this problem is related to  Gambler's ruin and the estimation of the  hitting time of zero for a random walk with negative drift starting from $L$. The difference here is that when we get close to zero, jumps may be notably big and close to $L$. We want here  estimates which hold for any initial length $L$ and will be used later.
\begin{proof} 
We first consider $L\leq 1$. We observe that with probability  at least $\exp(-L)\geq \exp(-1)$, no jumps occur before the age reaches the maximal value $L$  and freezing then occurs at time $L$. If a branching occurs before, we start again the process, with a length smaller than $L$. By iteration, we  get that the freezing time $\tau_1$ is stochastically smaller than $L$ times a geometric random variable with success probability $1-\exp(-1)$. It ensures that
$$\sup_{L\leq 1, a\leq L}\E_{(L,a)}(\exp(\lambda \tau_1))<\infty$$
for any $\lambda <1/(1-\exp(-1))$. Using Chernoff inequality  proves the proposition  for $L\leq 1 $.\\

Let us consider now  initial fragments bigger than $1$, i.e. $L\geq 1$. Let us deal first with initial  age $0$ for simplicity. The simple but key fact is that  
the probability to have no  point  between times $L/4$ and $L/2$  on a Poisson point process with parameter $1$ is $\exp(-(L/2-L/4))=\exp(-L/4)\leq \exp(-1/4)$. As a consequence, with probability at least $1-\exp(-1/4)$, a stick started from $L$ should  fragment between times $L/4$ and $L/2$. When this happens, the new stick has to be smaller than $3L/4$, whatever the choice at breaking. Observe that this  size reduction of factor $3/4$ still works when the stick already fragments before time $L/4$, whatever the choice of fragment at this breaking.   We get that
$$\alpha=\inf_{L\geq 1,a\leq L} \mathbb P_{(L,a)}\left(L^1_{L/2}\leq 3 L/4\right)\geq 1-\exp(-1/4) \in (0,1].$$
Iterating this argument, the time required to have a reduction of factor $3/4$ of the length $L$ is dominated by a geometric r.v. with success probability $\alpha$, multiplied by time $L$. This works until the length $L_t^1$ is below $1$ and ensures
that  starting from $(L,a)$, the freezing time $\tau_1$ satisfies
$$\tau_1\leq \sigma+L\sum_{i=0}^{\infty}   \text{Geom}_i(\alpha) (3/4)^i,$$
where $\text{Geom}_i(\alpha)$ are i.i.d. r.v.  with  geometric law with parameter $\alpha$ and $\sigma$
is an independent r.v. distributed as   the freezing time starting from length $1$  and age $0$.
We can control now the tail of the freezing time using classical Chernoff inequality  for $\lambda\geq 0$,
\begin{align*}
\mathbb P(\tau\geq t)&\leq e^{-\lambda t} \E\left(e^{\lambda\sigma}\right)
\Pi_{i\geq 0}
\E\left(\exp\left({\lambda L   (3/4)^{i}\text{Geom}_i(\alpha) }\right)\right).
\end{align*}
Choose $c \in (0,1-\exp(-1))$ such that $(1-\alpha)e^{c3/4}<1$ and  set $\lambda=c/L$.
Since $\lambda \leq c <(0,1-\exp(-1))), $
$$\E\left(e^{\lambda\sigma}\right)\leq \E\left(e^{c\sigma}\right)<\infty.$$
Moreover
$$\Pi_{i\geq 0} \E\left(\exp\left(\lambda L (3/4)^{i} \text{Geom}_i(\alpha)\right)\right)=\Pi_{i\geq 0}\frac{\alpha}{1-(1-\alpha)\exp({c(3/4)^i})},$$
which is finite and independent of $L$. This can be seen  by taking $\log$ of the product  and  using Taylor bounds of $\log$ and $\exp$ and $\log(1-x)$ and $\sum_{i\geq 0} (3/4)^i<\infty$. This concludes the proof.
\end{proof}

Let us  now  study the size of the frozen stick. The following bound  controls the tail of the size of the stick at freezing.  It will be required for  estimating the harmonic moment of the size of spinal rectangle at freezing time. 
\begin{prop}
\label{prop:oneDimControl}
For any non-negative measurable function $f$ and $L\geq 0$,
    \begin{equation}
    \label{eq:forf1}
    \E_{(L,0)} \left(\frac{f(L^1_{\tau_1})}{L^1_{\tau_1}}\right) \leq \frac{f(L)}{L } e^{-L} + 2e \|f\|_\infty .
\end{equation}
\end{prop}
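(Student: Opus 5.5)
The plan is to condition on the first breaking event and then control the small-length contributions through the size-biasing in the jump kernel of \eqref{defZ1Z2gen}.

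\textbf{First step: isolate the no-jump event.} Starting from $(L,0)$, no breaking occurs before the age reaches $L$ with probability $e^{-L}$. On that event the stick freezes with $L^1_{\tau_1}=L$, which produces exactly the term $\frac{f(L)}{L}e^{-L}$. On the complementary event, $L^1_{\tau_1}$ is the length of a stick produced after at least one breaking, and I will use the crude bound $f\le \|f\|_\infty$ there. So it suffices to prove
\[
R(L):=\E_{(L,0)}\Big(\frac{1}{L^1_{\tau_1}}\,;\,\text{at least one jump}\Big)\le 2e .
\]

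\textbf{Second step: write the jump kernel and note the size bias.} A first breaking happens at age $a$ with density $e^{-a}$ on $[0,L]$. It produces length $a$ with probability $a/L$ and length $L-a$ with probability $(L-a)/L$, in both cases with age reset to $0$. Changing variables, the new length $x\in(0,L)$ has density
\[
k_L(x)=\frac{x}{L}\big(e^{-x}+e^{x-L}\big).
\]
The factor $x$ is the key point: it compensates the singular weight $1/x$ at small lengths. Let $X_0=L,X_1,\dots$ be the successive lengths of the stick, and let $\mathcal G_L(dx)=\sum_{n\ge1}\Pp(X_n\in dx,\ n\text{th length exists})$ be the associated Green measure. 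A stick of length $x$ and age $0$ freezes before jumping with probability $e^{-x}$. Hence
\[
R(L)=\int_0^L \frac{e^{-x}}{x}\,\mathcal G_L(dx).
\]
By the Markov property, $\mathcal G_L$ has a density $u_L$ satisfying the renewal equation
\[
u_L(x)=k_L(x)+\int_x^L u_L(y)\,k_y(x)\,dy .
\]
Since every kernel carries the factor $x$, the function $v_L(x)=u_L(x)/x$ is finite at $0$, and it satisfies
\[
v_L(x)=\frac{e^{-x}+e^{x-L}}{L}+\int_x^L \frac{e^{-x}+e^{x-y}}{y}\,v_L(y)\,dy .
\]
The goal is then $\int_0^L e^{-x}v_L(x)\,dx\le 2e$.

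\textbf{Third step: close the estimate (main obstacle).} Equivalently, writing $R(L)$ via the one-step recursion gives
\[
R(L)\le A(L)+\frac1L\int_0^L x\big(e^{-x}+e^{x-L}\big)R(x)\,dx,\qquad A(L)=\frac{1-e^{-2L}}{2L}+e^{-L}\le 2 .
\]
The weights satisfy $\frac1L\int_0^L x(e^{-x}+e^{x-L})\,dx=1-e^{-L}$. A maximum-principle argument on $M(L)=\sup_{[0,L]}R$ then gives $M(L)\le A e^{L}$, which yields $2e$ for $L\le1$. For large $L$ this naive bootstrap fails, because the contraction factor $1-e^{-L}$ is nearly $1$.

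To handle large $L$, I would exploit the fact that $A(L)=O(1/L)$ is small there. I would prove by induction on dyadic ranges of $L$ a bound of the form $R(x)\le 2e$, using two facts about the weight $\frac{x}{L}(e^{-x}+e^{x-L})$. First, it puts mass $O(1/L)$ on lengths $x\le 1$. Second, the remaining mass sits near $x\approx L$ with total at most $1-e^{-L}-c/L$, so the $O(1/L)$ gain exactly absorbs $A(L)$.

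Alternatively, I would bound $v_L$ directly from its integral equation via a Gronwall argument in the variable $x$ (integrating downward from $L$). Splitting $e^{-x}+e^{x-y}$ into its two pieces, the goal would be a bound like $v_L(x)\le C e^{x}/(1+x)$ or better, so that $\int e^{-x}v_L$ stays bounded. Getting the sharp constant $2e$ uniformly in $L$ is the step I expect to be the hardest.
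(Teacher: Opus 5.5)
\textbf{Verdict: there is a genuine gap.} Your Steps 1 and 2 set things up correctly, apart from a slip noted below. But Step 3, the uniform-in-$L$ bound $R(L)\le 2e$, is the whole content of the proposition, and it is not proved.

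\textbf{Why Step 3 does not close.} The bootstrap only gives $2e^{L}$, as you note. The dyadic induction does not close as described. The $e^{-x}$ piece of the weight $\frac{x}{L}(e^{-x}+e^{x-L})$ has mass $\frac{1}{L}(1-(1+L)e^{-L})\approx \frac1L$, spread over $x=O(1)$ rather than sitting on $x\le 1$. To absorb $A(L)-2e\,e^{-L}\approx\frac{1}{2L}$, you would need $R\le 2e-\delta$ on a fixed interval $[0,x_*]$, with roughly $\delta\,(1-(1+x_*)e^{-x_*})\ge\frac12$. Nothing in your argument supplies this: on $[0,x_*]$ the bootstrap only gives $2e^{x}$. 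So ``the $O(1/L)$ gain exactly absorbs $A(L)$'' is an unchecked comparison of constants. The Gronwall target $v_L(x)\le Ce^{x}/(1+x)$ would only give $\int_0^L e^{-x}v_L\le C\log(1+L)$, which is unbounded in $L$.

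\textbf{The missing idea: the size-bias factors telescope.} Along a path of lengths $L=x_0>x_1>\dots>x_k$, the selection probabilities multiply to $\prod_{i=1}^k x_i/x_{i-1}=x_k/L$. This cancels the weight $1/x_k$ exactly.

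You were one line from this. Your equation for $v_L=u_L/x$ carries a spurious factor $1/y$, since
\[
\frac{u_L(y)\,k_y(x)}{x}=\frac{u_L(y)}{y}\big(e^{-x}+e^{x-y}\big)=v_L(y)\big(e^{-x}+e^{x-y}\big).
\]
With this corrected, $w:=Lv_L$ satisfies
\[
w(x)=k(L,x)+\int_x^L k(y,x)\,w(y)\,dy,\qquad k(y,x)=e^{-x}+e^{-(y-x)}.
\]
That is, $w$ is the Green density, started at $L$, of the \emph{unbiased} kernel $k(y,x)1_{\{x<y\}}dx$. Hence
\[
R(L)=\frac1L\int_0^L e^{-x}w(x)\,dx\le \frac{U(L)-1}{L},\qquad U(L):=1+\int_0^L w(x)\,dx .
\]
No contraction argument is then needed, because $U$ is computed exactly. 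A first-step decomposition gives $U(x)=1+\int_0^x k(x,y)U(y)\,dy$; finiteness is clear from $k\le 2$, which gives $U\le e^{2x}$. Differentiating twice gives
\[
U''=e^{-x}U',\qquad U(0)=1,\quad U'(0)=2.
\]
So $U'(x)=2e^{1-e^{-x}}\le 2e$, hence $U(L)-1\le 2eL$ and $R(L)\le 2e$. This is exactly the paper's route via Lemma~\ref{lemme:useful} and the function $u=\sum_k u_k$.
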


Before we start the proof, we prepare it with the following lemma that will be also useful in further computations in the next section.

\begin{lemme}
\label{lemme:useful}
    For any non-negative measurable function $f$ and $L\geq 0$,
    \begin{align*}&\E_{(L,0)} \left(\frac{f(Z^1_{\tau_1})}{L^1_{\tau_1}}\right)
 =   \frac{f(L)}{L } e^{-L} + \frac{1}{L} M_f(L),
\end{align*}
where 
  $$M_f(x_0):=\sum_{k\geq 1}\int_{\R_+^{k}} {e^{-x_{k}}}  f(x_{k}) \Pi_{i=1}^{k}v_{x_{i-1}}(dx_{i})$$
  and the kernel $v$ on $\R_+$ is defined by
  $$v_{L}(dx)=1_{\{x<L\}} \left(e^{-(L-x)}  + e^{-x}\right) \ dx.$$
\end{lemme}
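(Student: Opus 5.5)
We decompose according to the number $k\geq 0$ of fragmentations undergone by the stick before freezing. Starting from $(L,0)$, the age grows at speed $1$ and the stick splits at rate $1$, so freezing occurs with no jump exactly on the event that the first jump time $E_1\sim\mathrm{Exp}(1)$ satisfies $E_1\geq L$. This event has probability $e^{-L}$, and on it $Z^1_{\tau_1}=(L,L)$ and $L^1_{\tau_1}=L$. This gives the term $f(L)e^{-L}/L$, with $f$ evaluated at the frozen state; since at freezing age equals length, $f$ is effectively a function of one variable.

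Next we compute the one-step transition between successive reboot states. Let the current stick have length $x_{i-1}$ and age $0$. A jump occurs at age $a\in(0,x_{i-1})$ with density $e^{-a}$. Given $a$, the new length is $x_{i-1}-a$ with probability $(x_{i-1}-a)/x_{i-1}$ and $a$ with probability $a/x_{i-1}$. Changing variables to the new length $x$ (namely $x=x_{i-1}-a$ in the first case and $x=a$ in the second), the density of the new length $x\in(0,x_{i-1})$ is
$$\frac{x}{x_{i-1}}\left(e^{-(x_{i-1}-x)}+e^{-x}\right)dx=\frac{x}{x_{i-1}}\,v_{x_{i-1}}(dx).$$
By the strong Markov property at the successive jump times, the sequence of lengths after jumps, $x_0=L,x_1,x_2,\dots$, is therefore a Markov chain, with transition kernel $\frac{x}{x_{i-1}}v_{x_{i-1}}(dx)$, killed at each step with the freezing probability.

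Freezing after exactly $k\geq1$ jumps means that after the $k$-th jump, starting from $(x_k,0)$, no jump occurs before age $x_k$. This has probability $e^{-x_k}$, and then $L^1_{\tau_1}=x_k$. Hence
$$\E_{(L,0)}\left(\frac{f(Z^1_{\tau_1})}{L^1_{\tau_1}}\right)=\frac{f(L)}{L}e^{-L}+\sum_{k\geq1}\int \frac{f(x_k)}{x_k}\,e^{-x_k}\prod_{i=1}^k\frac{x_i}{x_{i-1}}\,v_{x_{i-1}}(dx_i).$$
The product of ratios telescopes to $x_k/x_0=x_k/L$. The factor $x_k$ cancels the $1/x_k$, leaving exactly $\frac1L M_f(L)$.

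Exchanging sum and integral is justified by nonnegativity (Tonelli), and a.s. finiteness of the number of jumps follows from Proposition \ref{Gel}. The only delicate point is the change of variables and the size-biasing factors. One must check that the two cases produce the symmetric kernel $e^{-(x_{i-1}-x)}+e^{-x}$ with the same prefactor $x/x_{i-1}$, which holds because the kept fragment has probability equal to its relative length. The telescoping then follows directly.
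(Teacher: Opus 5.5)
Your proposal is correct and follows essentially the same route as the paper. The paper also works with the skeleton of successive lengths, identifies the one-step sub-probability kernel $\frac{x}{x_{i-1}}v_{x_{i-1}}(dx)$, multiplies by the no-jump probability $e^{-x_k}$ on $\{N=k\}$, and lets the telescoping factor $x_k/L$ cancel the $1/x_k$; your justification that the number of jumps is a.s. finite (via Proposition~\ref{Gel}) makes explicit a point the paper leaves implicit.
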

\begin{proof}
We use here the skeleton of the length component $(L^1_t)_{t\geq 0}$ of the stick. With a slight abuse of notation, we write $(L_i)_{0\leq i \leq N}$ the successive distinct values of this length process,    where $N$ is the number of jumps before freezing.
  This process starts from length $L_0=L$ at time $0$ and jumps from $L_{i}$ to $L_{i+1}$ for $i=1, \ldots, N$. Then it stays with length $L_N$ for ever. Observe that
  \begin{align*}
  \Pp(N\geq i+1, \, L_{i+1} \in dx_{i} \vert   N\geq i, L_i)
  &=\frac{x_i}{L_i} v_{L_i}(dx_i).
  \end{align*}
We get, for $k\geq 0$, writing $x_0=L$, by classical induction for Markov transitions
  \begin{align*}
 & \Pp_{L}(N= k, \, L_{i} \in dx_i : 1\leq i\leq k )\\
 &\qquad = \Pp_{L}(N= k \, \vert \, N\geq k, L_{i} \in dx_i : 1\leq i\leq k) \times
  \Pp_{L}(N\geq  k, \, L_{i} \in dx_i : 1\leq i\leq k)\\
  &\qquad =e^{-x_k} \times \Pi_{i=1}^{k}
  \Pp_{L}(N\geq i , \, L_{i} \in dx_i \vert N\geq i-1,  L_{i-1}=x_{i-1} )\\
&\qquad =e^{-x_k}\frac{x_k}{L}\Pi_{i=1}^k v_{x_{i-1}}(dx_i).
  \end{align*}
  We obtain
  $$\E_{(0,L_0)} \left[ 1_{ (N=k)} \frac{f(L_k)}{L_k} \right]=\frac{1}{L}\int_{\R_+^{k}} {e^{-x_{k}}}  f(x_{k}) \Pi_{i=1}^{k}v_{x_{i-1}}(dx_{i}) $$
  and we get
\begin{align*}&\E_{(L,0)} \left(\frac{f(Z^1_{\tau_1})}{L^1_{\tau_1}}\right)
=\sum_{k\geq 0} \E_{(L,0)} \left[ 1_{N=k} \,  \frac{f(L_k)}{L_k} \right]
    \label{eq:for2d}
 =   \frac{f(L)}{L } e^{-L} + \frac{1}{L} M_f(L),
\end{align*}
which ends the proof.
\end{proof}

\begin{proof}[Proof of Proposition \ref{prop:oneDimControl}]
 Applying Lemma \ref{lemme:useful}, we start from   \begin{align*}&\E_{(L,0)} \left(\frac{f(Z^1_\tau)}{L^1_\tau}\right)
 =   \frac{f(L)}{L } e^{-L} + \frac{1}{L} M_f(L).
\end{align*}
We now prove that $M_f$ grows (at most) linearly. For that purpose, we notice that bounding $f$ and the exponential by $1$ 
\begin{equation}
    \label{eq:estMwithU}
    M_f(x_0)\leq \|f\|_{\infty}\sum_{k\geq 1} u_k(x_0),   
\end{equation}
where  for $k\geq 1$,
\begin{equation}
\label{defuk}
u_k(x_0):=\int_{\R_+^{k}}  \Pi_{i=1}^{k}v_{x_{i-1}}(dx_{i}).
\end{equation}
We observe that setting $u_0(x)= 1$, the sequence $(u_n)_n$ satisfies  for $n\geq 0$
$$u_{n+1}(x)=\int_0^\infty  u_n(y) \, v_x(dy) =\int_0^x k(x,y) u_n(y) dy$$
with $k(x,y)= e^{-y}+ e^{-(x-y)} $. The function
\begin{equation}
\label{ladefdeu}
u(y)= \sum_{k\geq 0} u_k(y)    
\end{equation}
 is well defined by simply forgetting the exponentials and integrating over a simplex, i.e. the sum is convergent and we have $u(x)\leq 1+C e^{2x}$.
Multiplying  \eqref{ladefdeu} by $k(x,y)$ and integrating   w.r.t. to $dy$ on the interval $[0,x]$,  we obtain the relation
$$u(x)-1= \int_0^x k(x,y) u(y)dy. $$
Deriving two times this identity yields
 \begin{align*}
     u'(x)&= (1+e^{-x}) u(x)-e^{-x}\int_0^x e^y u(y) dy,\\
     u''(x)&=(1+e^{-x}) u'(x) - (1+e^{-x}) u(x) +e^{-x}\int_0^x e^y u(y) dy. 
 \end{align*}
 Summing the above, we obtain the ODE
 $$u''(x) = e^{-x} u'(x),$$
with $u(0)=1$ and $u'(0) = 2$.
This is easily solvable  and gives us that 
\begin{equation}
\label{borneu}
\sum_{k\geq 1}u_k(x)=u(x)-1= 2 \int_0^x e^{1-e^{-y}} dy \leq  2e x.
\end{equation}
Plugging this back to \eqref{eq:estMwithU} ends the proof of \eqref{eq:forf1}.
\end{proof}
\begin{rque} The estimates above will be enough for our purpose. 
    Inspecting the  proof, we 
    see that we can obtain a sharper estimate : for any non-negative measurable function $f$ and $L\geq 0$, 
    $$  \E_{(L,0)} \left(\frac{f(L^1_{\tau_1})}{L^1_{\tau_1}}\right) \leq \left(\frac{1}{L}+2e\right)\sup_{x\in \mathbb R_+} \vert  f(x)e^{-x}\vert.$$
    Such estimates will actually play a role in the forthcoming estimations involving infinite rectangles since these latter create finite rectangle with non bounded sizes.
\end{rque}

\subsection{Freezing time and small fragments for rectangles}

In this section,  we start from an initial rectangle which is finite and with age zero. We combine first our coupling  results with  stick-breaking processes  and 
the estimates obtained on them.
We derive an estimation of freezing time and a control of  the first moment of small rectangles. 
\begin{prop} \label{job}
For any  $L_, \ell<\infty$ and $a\in [0,L]$ and $t\geq 0$ and $\varepsilon\leq   L\ell$, we have
\begin{align}
   i) \quad  &\Pp_{(L, \ell,a)}(\tau \geq t)\leq 2C_F\exp\left(-\frac{c_Ft}{2(L\vee \ell \vee 1)}\right) \label{prop:smallrect} \\
    ii) \quad   &  M_t 1_{< \varepsilon}(L,\ell,a)\leq C\big( 1+L+\ell\big)^3, 
    \, \varepsilon_{\ell,L} 
    \end{align}
    where  $C_F,c_F$ are constants given by Proposition \ref{Gel} and  $1_{< \varepsilon}(x, y, b) = 1_{\{ x y < \varepsilon\}}$ and $C$ is also a universal constant (independent of the parameters) and 
    $$\varepsilon_{\ell,L}=\textsc{min}\left\{\varepsilon\left(  \frac{1}{\ell} +\frac{1}{L}+  \log\left(1+\frac{1}{\varepsilon}\right)\right) \, , \,  1 \right\}.$$
\end{prop}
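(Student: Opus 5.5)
Part i) follows from the coupling of Proposition~\ref{prop:YtoZatFreeze} combined with Proposition~\ref{Gel}. Part ii) uses the $h$-transform to reduce to the spinal rectangle frozen at $\tau$, then the same coupling to reduce to two independent frozen sticks, whose laws are made explicit by Lemma~\ref{lemme:useful}.

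\emph{Part i).} Since $\tau\le\tau_1+\tau_2$ a.s., we have $\{\tau\ge t\}\subset\{\tau_1\ge t/2\}\cup\{\tau_2\ge t/2\}$. The stick $Z^1$ starts from $(L,a)$ and $Z^2$ from $(\ell,0)$. Proposition~\ref{Gel} then gives
\[\Pp(\tau\ge t)\le C_Fe^{-c_Ft/(2(L\vee1))}+C_Fe^{-c_Ft/(2(\ell\vee 1))},\]
and each term is bounded by $C_F\exp(-c_Ft/(2(L\vee\ell\vee1)))$.

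\emph{Part ii), reduction to frozen sticks.} By definition of $P$,
\[M_t1_{<\varepsilon}(z)=h(z)\,\E_z\big[g(L_t\ell_t)\big],\qquad g(x):=\frac{1_{x<\varepsilon}}{x}.\]
The key observation is a monotonicity. Lengths and widths only decrease along the spine, so $L_\tau\ell_\tau\le L_t\ell_t$. Moreover, whenever $0<X\le x$ we have $g(x)\le g(X)$: if $x<\varepsilon$ then $X<\varepsilon$ and $1/x\le 1/X$. Hence $g(L_t\ell_t)\le g(L_\tau\ell_\tau)$. Proposition~\ref{prop:YtoZatFreeze} gives $L_\tau\ell_\tau\ge XY$ with $X=L^1_{\tau_1}$ and $Y=L^2_{\tau_2}$ independent. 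Applying the monotonicity once more,
\[M_t1_{<\varepsilon}(z)\le L\ell\,\E\big[1_{XY<\varepsilon}/(XY)\big].\]

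\emph{Part ii), the two-stick computation.} By Lemma~\ref{lemme:useful}, for $\varphi\ge0$,
\[\E_{(L,0)}[\varphi(X)/X]=\frac{\varphi(L)e^{-L}}{L}+\frac1L\int \varphi(x)\,\mu_L(dx),\qquad \mu_L(dx)=\sum_{k\ge1}\int e^{-x_k}\Pi_i v_{x_{i-1}}(dx_i).\]
Note that there is no factor $1/x$ under $\mu_L$. The last step of $v$ has density at most $2$, so by \eqref{borneu} $\mu_L$ has a density on $[0,L]$ bounded by $2u(L)\le C(1+L)$.

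Apply this formula successively in $Y$ and then in $X$, and expand $L\ell\,\E[\cdot]$ into four terms. The atom--atom term is $1_{L\ell<\varepsilon}$, which vanishes since $\varepsilon\le L\ell$. The atom($X$)--density($Y$) term is bounded by
\[C(1+\ell)\,|\{y\le\ell: Ly<\varepsilon\}|/L\le C(1+\ell)\varepsilon/L,\]
and symmetrically the other mixed term gives $C(1+L)\varepsilon/\ell$. The density--density term is bounded by
\[C(1+L)(1+\ell)\,\big|\{x\le L,\,y\le\ell: xy<\varepsilon\}\big|\le C(1+L)(1+\ell)\,\varepsilon\big(1+\log(L\ell/\varepsilon)\big),\]
and $\log(L\ell/\varepsilon)\le C(1+L+\ell)+\log(1+1/\varepsilon)$. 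Summing the four terms gives $C(1+L+\ell)^3\varepsilon(1/\ell+1/L+\log(1+1/\varepsilon))$. Since all terms are nonnegative, the additional bound by $C(1+L+\ell)^3$ in the $\min$ is obtained separately. Bounding the indicator by $1$ before expanding, the atom--atom term is at most $1$, each mixed term at most $C(1+\ell)$ or $C(1+L)$, and the density--density term at most $\mu_L(\R_+)\mu_\ell(\R_+)/(L\ell)\le C$ by \eqref{borneu}.

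\emph{Main obstacle.} Lemma~\ref{lemme:useful} is stated only for initial age $0$, whereas $Z^1$ starts at age $a\in[0,L]$. I will handle this by conditioning on the first event of $Z^1$. With probability $e^{-(L-a)}$ the stick freezes at length $L$, giving an atom as above. Otherwise a jump occurs at age $s\in[a,L)$, after which the process restarts at age $0$ from length $L-s$ or $s$, with respective weights $(L-s)/L$ and $s/L$. The lemma then applies to the restarted stick, and the resulting kernel is again a density bounded by $2$. The prefactor $1/L$ is then recovered with the same constants, at the cost of the polynomial factor already present.
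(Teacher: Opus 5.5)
Your proposal is correct and follows the paper's argument: the $h$-transform and monotonicity reduce the quantity to $L\ell\,\E[1_{XY<\varepsilon}/(XY)]$ via Proposition~\ref{prop:YtoZatFreeze}, and Lemma~\ref{lemme:useful} is then applied to each stick with the last-step density controlled by $2u$; the paper conditions on $Z^2$ and keeps the weight $e^{-x}$ (producing $g_{\varepsilon,L}$ and $b(L,\ell,\varepsilon)$), whereas you bound the area of $\{xy<\varepsilon\}\cap[0,L]\times[0,\ell]$ and absorb $\log(L\ell)$ into the polynomial prefactor, which is equally fine. Your first-step treatment of the initial age $a$ improves on the paper, whose proof effectively handles only $a=0$ (see \eqref{eq:smallRecttoCoupling}); note, however, that your four-term sum is bounded by $C(1+L+\ell)^3\varepsilon(1/\ell+1/L+\log(1+1/\varepsilon))$ only for $\varepsilon\le 1$, where $\log(1+1/\varepsilon)\ge\log 2$ absorbs the additive constants, while for $\varepsilon>1$ you have $\varepsilon_{\ell,L}\ge\log 2$, so your crude bound $C(1+L+\ell)^3$ already suffices.
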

\begin{proof}
The proof of $i)$ comes directly from Propositions \ref{Gel} and \ref{prop:YtoZatFreeze}, since
$$\mathbb  P(\tau \geq t)\leq \mathbb P(\tau_1\geq t/2)+\mathbb P(\tau_2\geq t/2).$$
The proof of $ii)$ requires more work.
We start from the $h$-transform,
    $$M_t 1_{< \varepsilon}(L,\ell,a) = h(L,\ell,a) P_t (1_{<\varepsilon}/h)(L,\ell,a),$$
where $P_t$ is the semigroup associated to the process $Y$ that represents the evolution of a typical rectangle in our population, see \eqref{genetyp} and the beginning of Section \ref{spinal}. 

Now, notice that $h$ is an increasing function of the surface of a given rectangle and that $1_{< \varepsilon}$ is decreasing with respect to the surface. As in its evolution $Y$ takes values such that the surface becomes smaller and smaller, for any $t>0$, we have that $$\frac{1_{< \varepsilon}}{h} (Y_t) \leq \frac{1_{< \varepsilon}}{h} (Y_\tau), $$ where $\tau$ is the freezing time of $Y$ and $Y_\tau$ is the state upon freezing. Hence, 
$$M_t 1_{< \varepsilon}(L,\ell,a) \leq h(L,\ell,a)\E_{(L,\ell,a)}\left[\frac{1_{<\varepsilon}}{h} (Y_\tau) \right]=L\ell \E_{(L,\ell,a)}\left[\frac{1_{L_\tau \ell_{\tau}< \varepsilon}}{L_\tau \ell_{\tau}}\right].$$
Now, we use the coupling with two independent realizations $Z^1$ and $Z^2$ of the two dimensional process $Z$ starting respectively from $(L,0)$ and $(\ell,0)$. In view of Proposition~\ref{prop:YtoZatFreeze} and the same monotonicity argument, we have that 
\begin{equation}
\label{eq:smallRecttoCoupling}
M_t 1_{<\varepsilon}(L,\ell,0) \leq Ll\E_{(L,0),(\ell,0)}\left[\frac{1_{ \{ L^1_{\tau_1} L^2_{\tau_2} < \varepsilon\} }}{L^1_{\tau_1} L^2_{\tau_2}} \right],
\end{equation}
where $L^1$ and $L^2$ are the lengths components of $Z^1$ and $Z^2$ respectively.

In what follows, we will focus on controlling the expectation on the right hand side of \eqref{eq:smallRecttoCoupling} using the independence of $Z^1$ and $Z^2$. We will do so in a two steps procedure. In the first step we condition w.r.t. $Z^2$ and perform the computations on $Z^1$ using Proposition~\ref{prop:oneDimControl}; In the second step we integrate w.r.t. $Z^2$ what has been obtained in  the first step.

{\bf First step.}
Conditioning w.r.t.  $Z^2$ we have
\begin{equation}
    \label{eq:step11}
   \E_{(L,a),(\ell,0)}\left[\frac{1_{ \{ L^1_{\tau_1} L^2_{\tau_2} < \varepsilon\} }}{L^1_{\tau_1} L^2_{\tau_2}} \right]=\E_{(\ell,0)}\left[\frac{1}{L^2_{\tau_2}}\E_{(L,a)} \left( \frac{1_{L^1_{\tau_1}<\varepsilon/L^2_{\tau_2}}}{L^1_{\tau_1}} \, \big| \,  Z^2\right) \right].
\end{equation}
Apply Lemma \ref{lemme:useful} for $f(z)=1_{z<\varepsilon/y}$ for a given $y$. It comes that
\begin{equation}
    \label{eq:step12}
  \E_{(L,0)} \Big[\frac{1_{L^1_{\tau_1<\varepsilon/y}}}{L^1_{\tau_1}} \Big] = \frac{1_{L<\varepsilon/y}}{L} e^{-L}+ \frac{1}{L} M_y(L).
\end{equation}
Here
$$z_0:= L \quad  \text{ and } \quad M_{\varepsilon,y}(L):=\sum_{k\geq 1}\int_{\R_+^{k}} {e^{-z_{k}}}  1_{z_k< \varepsilon/y} \Pi_{i=1}^{k}v_{z_{i-1}}(dz_{i}).$$
First, we integrate once w.r.t. the last variable ($dz_{k}$) bounding $e^{-z_k}v_{z_{k-1}}(dz_{k})\leq 2e^{-z_k}\times  1_{\{z_k\leq z_{k-1}\}}dz_{k}$. This gives
$$M_{\varepsilon,y}(L)\leq 2 \sum_{k\geq 1} \int_{\R_+^{k-1}} g_{\varepsilon,z_{k-1}}(y) 
\Pi_{i=1}^{k-1}v_{z_{i-1}}(dz_{i}),$$
where 
$$g_{\varepsilon, z}(y)= 1-e^{-(z \wedge \frac{\varepsilon}{y})}.$$
Due to the form of $v$, we have that $z_{k-1}\leq z_{k-2} \leq \dots \leq z_1\leq z_0=L $. Hence 
$$M_{\varepsilon,y}(L)\leq 2 g_{\varepsilon, L}(y)
\Big(1+\sum_{k\geq 2} \int_{\R_+^{k-1}}\Pi_{i=1}^{k-1}v_{z_{i-1}}(dz_{i}) \Big).$$
Performing a change in the index of summation, we obtain that 
$$M_{\varepsilon,y}(L)\leq 2g_{\varepsilon, L}(y)u(L),$$
where $u$ is defined in \eqref{ladefdeu}. Using \eqref{borneu} leads to
$$M_{\varepsilon,y}(L)\leq 2g_{\varepsilon, L}(y) (1+2eL).$$
Plugging the above in \eqref{eq:step12} 
yields 
\begin{equation}
    \label{eq:step13}
  \E_{(L,a)} \Big[\frac{1_{L^1_{\tau_1}<\varepsilon/y}}{L^1_{\tau_1}} \Big] \leq  \frac{1_{L<\varepsilon/y}}{L} e^{-L}+ \frac{2}{L} g_{\varepsilon, L}(y)(1+2eL).
\end{equation}
Inequality~\eqref{eq:step13} plugged in Eq.~\eqref{eq:step11} leads us to 
\begin{align}
    \label{eq:step21}
    &\E_{(L,a),(\ell,0)}\left[\frac{1_{ \{ L^1_{\tau_1} L^2_{\tau_2} < \varepsilon\} }}{L^1_{\tau_1} L^2_{\tau_2}}\right] \leq \,   I \, + \, II 
\end{align}
where 
$$I:=\frac{e^{-L}}{L}\E_{(\ell,0)}\Big(  \frac{1_{L<\varepsilon/L^2_{\tau_2}}}{L^2_{\tau_2}} \Big) $$
and 
$$II:=2\frac{ 1+2eL}{L}\E_{(\ell,0)}\Big( \frac{g_{\varepsilon, L}(L^2_{\tau_2})}{L^2_{\tau_2}}\Big).$$
{\bf Second step.} Noticing that 
$1_{L<\varepsilon/L^2_{\tau_2}}=1_{L^2_{\tau_2}<\varepsilon/L}$,
we can proceed for $I$ as in Step 1. If we change the role of  $(y,L^1_{\tau_1}, L)$ with $(L,L^2_{\tau_2}, \ell)$ in  \eqref{eq:step13} and using that $\varepsilon \leq L\ell$, we obtain
\begin{align}
    \label{eq:step22}
    I&
    \leq   2\frac{e^{-L}}{L\ell}  (1+2e\ell) (1-e^{- \frac{\varepsilon}{L}}).
\end{align}
It remains to treat $II$. 
As in Step 1, we apply Lemma \ref{lemme:useful} with $f=g_{\varepsilon, L}$ to obtain
$$\E_{(\ell,0)}\Big( \frac{g_{\varepsilon, L}(L^2_{\tau_2})}{L^2_{\tau_2}}\Big)= \frac{e^{-\ell}}{\ell} g_{\varepsilon, L} (\ell) +  \sum_{k\geq 1} \int_{\R_+^{k}} \frac{e^{-z_k}}{\ell} g_{\varepsilon, L} (z_k)\Pi_{i=1}^{k}v_{z_{i-1}}(dz_{i}). $$
Again, we bound $e^{-z_k}v_{z_{k-1}}(dz_{k})\leq 2e^{-z_k} \times 1\{z_k\leq z_{k-1}\}dz_k$ and use  that $z_{k-1}\leq \ell$. Denote 
\begin{align*}
b(L,\ell,\varepsilon):=\int_0^{\ell}e^{-x} (1-e^{-(L \wedge \frac{\varepsilon}{x})}) dx
\end{align*}
Hence, as before
\begin{align*}
    II
&\leq \frac{ 1+2eL}{L\ell}\Big(e^{-\ell} g_{\varepsilon, L} (\ell) + b(L, \ell, \varepsilon)( 1+2e\ell) \Big).
\end{align*} 
Gathering the estimates on $I$ and $II$  and plugging them in \eqref{eq:step21} yields   \begin{align*}
     \E_{(L,0),(\ell,0)}\left[\frac{1_{ \{ L^1_{\tau_1} L^2_{\tau_2} < \varepsilon\} }}{L^1_{\tau_1} L^2_{\tau_2}}\right]
    &\leq  2\frac{(1+2e\ell)}{L\ell}   (1-e^{- \frac{\varepsilon}{L}}) \\
    &\quad +  \frac{ 1+2eL}{L\ell}\Big(  1-e^{- \frac{\varepsilon}{\ell}} + b(L, \ell, \varepsilon)( 1+2e\ell) \Big).
\end{align*}
The above inequality plugged in \eqref{eq:smallRecttoCoupling} leads to 
\begin{align}
    \label{eq:conclusion}
    M_t f_\varepsilon(L,\ell,0) &\leq 2  (1+2e\ell) (1-e^{- \frac{\varepsilon}{L}}) \\
    & \qquad  +(1+2eL)\Big( 1-e^{- \frac{\varepsilon}{\ell}} + b(L, \ell, \varepsilon)( 1+2e\ell) \Big). \nonumber 
\end{align}
We use now 
\begin{align*}
b(L,\ell,\varepsilon)
&= 
(1-e^{-L})(1-e^{-\varepsilon/L})+\int_{\varepsilon/L}^{\ell}e^{-x}(1-e^{-\varepsilon/x}) dx 
 \end{align*}
 and 
 $0\leq 1-e^{-u}\leq 1\wedge u$ 
to get
\begin{align*}
\int_{\varepsilon/L}^{\ell}e^{-x}(1-e^{-\varepsilon/x}) dx &\leq 
\int_{\varepsilon/L}^{\ell\wedge \varepsilon}  e^{-x}dx+\int_{ \varepsilon}^{\infty} e^{-x}\frac{\varepsilon}{x}dx\\
&\qquad \leq \ell\wedge \varepsilon +(\varepsilon \wedge 1) (\log(1+1/\varepsilon)+1)
    \end{align*}
    and 
\begin{align*}
& M_tf_{\varepsilon} (L,\ell,0) \leq 2  (1+2e\ell)  (\frac{\varepsilon}{L}\wedge 1) \\
    & \quad   +(1+2eL)\Big( \frac{\varepsilon}{\ell} \wedge 1 +\Big(\frac{\varepsilon}{L} \wedge 1+ \ell(1\wedge \frac{\varepsilon }{\ell})+(\varepsilon \wedge 1)(\log(1+1/\varepsilon)+1)\Big) ( 1+2e\ell) \Big)
\end{align*}
It ends the proof.
\end{proof}

\section{First moment semigroup}
\label{Firstmomentsg}
In this section we obtain the convergence of the first moment semigroup of the branching process of rectangle
$$M_tf(z)=\E_{\delta_z}(Z_t(f))=\E_{\delta_z}\left(\sum_{u\in V(t)\cup W(t)} f(L_u, \ell_u, A_u(t)) \right),$$
where $z\in \mathcal X$ and $f : \mathcal X \rightarrow \mathbb R$ mesurable (bounded or non-negative).\\

First we treat finite initial condition $z\in \{(L,\ell,a)\in \R_+^3 : a\leq L\}$ and then combine it with  double Poisson immigration structure to obtain the desired result also for infinite initial rectangles (with one or two infinite sides). 

\subsection{Quantitative estimates for  finite rectangles}
We now control the convergence of the first moment semigroup
$$M_tf(z)=h(z)P_t(f/h)$$
to the stationary distribution $h(z)\pi_z(f/h)$, where $z\in \{(L,\ell,a)\in \R_+^3 : a\leq L\}$ $\pi_z$ is the  limiting law of the spinal rectangle $Y$ starting from $z=(L,\ell,a)\in \R_+^3$, i.e. its value at freezing  :
\begin{equation} 
\label{defpiz}
\pi_z(f)=\E_z(f(Y_{\tau}))=\E_z(f(L_{\tau}, \ell_{\tau}, A_{\tau})), \quad \tau:= \inf\{t\geq 0 : A_t\geq L_t\}.
\end{equation}
\begin{prop} \label{job2}
For any  $z=(L,\ell,a)\in \R_+^3$ such that  $a\in [0,L]$, for any $t\geq 0$  and $f$ non-negative measurable bounded, 
\begin{align}
   i) \quad  & M_t f (z)\leq C\parallel f \parallel_{\infty}\big( 1+L+\ell\big)^3 
\label{eq:controlSmallRect} \\
    ii) \quad   &
    | M_t f(z) - h(z) \pi_{z}(f/h)|\leq C_{\alpha} \parallel f \parallel_{\infty}\frac{(1+L+\ell  )^{4\vee 3\alpha}}{(t+1)^{\alpha}}, \label{ctrfini}
    \end{align}
    for any $\alpha>1$, 
    where  the constant $C$ (resp. $C_{\alpha}$) is universal (resp. only depends on $\alpha$).
\end{prop}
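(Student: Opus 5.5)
We write everything through the $h$-transform, $M_tf(z)=h(z)P_t(f/h)(z)=L\ell\,\E_z\big[f(Y_t)/h(Y_t)\big]$, and split according to whether $Y$ is frozen at time $t$. On $\{\tau\le t\}$ we have $Y_t=Y_\tau$. Hence
\begin{equation*}
M_tf(z)-h(z)\pi_z(f/h)=L\ell\,\E_z\Big[\Big(\frac{f}{h}(Y_t)-\frac{f}{h}(Y_\tau)\Big)1_{\tau>t}\Big].
\end{equation*}
Both parts of the proposition thus reduce to controlling harmonic moments $\E_z[1/h(Y_s)]$, possibly restricted to $\{\tau>t\}$. We use the monotonicity already exploited in the proof of Proposition~\ref{job}: the surface of $Y$ only decreases, so $1/h(Y_t)\le 1/h(Y_\tau)$ for every $t$.

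\textbf{Step i).} Bound $|f|\le\|f\|_\infty$. Then
\begin{equation*}
M_tf(z)\le\|f\|_\infty\,L\ell\,\E_z\big[1/(L_\tau\ell_\tau)\big]\le\|f\|_\infty\,L\ell\,\E\big[1/L^1_{\tau_1}\big]\,\E\big[1/L^2_{\tau_2}\big],
\end{equation*}
where the last inequality uses Proposition~\ref{prop:YtoZatFreeze} and the independence of $Z^1,Z^2$. Proposition~\ref{prop:oneDimControl} with $f\equiv1$ gives $\E_{(L,0)}[1/L^1_{\tau_1}]\le e^{-L}/L+2e$, so
\begin{equation*}
L\ell\,(e^{-L}/L+2e)(e^{-\ell}/\ell+2e)\le C(1+L)(1+\ell).
\end{equation*}
This is better than the $(1+L+\ell)^3$ claimed. (Equivalently, take $\varepsilon=L\ell$ in Proposition~\ref{job} ii), which gives the cubic bound directly.) A nonzero initial age $a$ for $Z^1$ needs one extra remark. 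Before its first jump the stick keeps length $L$. At that jump it moves to a fresh stick $(L',0)$ with $L'\le L$, or freezes at length $L$. Conditioning on the first jump and using the bound above for $(L',0)$, which is increasing in $L'$, keeps the bound uniform in $a$.

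\textbf{Step ii).} Bound the difference by
\begin{equation*}
2\|f\|_\infty\,L\ell\,\E_z\big[1_{\tau>t}/h(Y_\tau)\big]
\end{equation*}
and apply Hölder with exponents $p,q$:
\begin{equation*}
\E_z\big[1_{\tau>t}/h(Y_\tau)\big]\le\Pp_z(\tau>t)^{1/q}\,\E_z\big[h(Y_\tau)^{-p}\big]^{1/p}.
\end{equation*}
The first factor is exponentially small by Proposition~\ref{job} i): $\Pp_z(\tau>t)\le 2C_F\exp\big(-c_Ft/(2(L\vee\ell\vee1))\big)$. Use $e^{-x}\le C_\beta x^{-\beta}$ with $x=c_Ft/(2q(1+L+\ell))$. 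This turns $\Pp_z(\tau>t)^{1/q}$ into $C(1+L+\ell)^{\alpha}(1+t)^{-\alpha}$ for any prescribed $\alpha$. For small $t$ the trivial bound from i) suffices. The second factor is the main obstacle, because it needs a $p$-th harmonic moment of the frozen surface with $p>1$, and Proposition~\ref{prop:oneDimControl} only gives $p=1$. We redo Lemma~\ref{lemme:useful} with $f(x)=x^{1-p}$:
\begin{equation*}
\E_{(L,0)}\big[(L^1_{\tau_1})^{-p}\big]=\frac{L^{-p}e^{-L}}{L}\cdot L+\frac1L\sum_{k\ge1}\int e^{-x_k}x_k^{1-p}\,\Pi_i v_{x_{i-1}}(dx_i).
\end{equation*}
For $p<2$, integrate out the last variable as in the proof of Proposition~\ref{job}: $\int_0^{x}2x_k^{1-p}\,dx_k<\infty$. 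The remaining sum is then bounded by $u(L)\le 1+2eL$, which gives $\E[(L^1_{\tau_1})^{-p}]\le C_p(L^{-p}+1)$. Choosing $p$ close to $1$, the $L\ell$ prefactor absorbs the $L^{-p}$ and $\ell^{-p}$ only up to $L^{1-p}$ and $\ell^{1-p}$. To avoid this singularity at small sides, we instead bound the contribution of $\{\tau>t\}$ from the no-jump scenario separately. For $L,\ell\le1$, freezing before $t$ fails only with probability $e^{-ct}$, and the polynomial factors are harmless.

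Collecting the powers of $(1+L+\ell)$ gives, after optimizing $q$ against $\alpha$, an exponent of the form $4\vee3\alpha$. Here the $3\alpha$ comes from converting the tail $\exp(-ct/(L\vee\ell\vee1))$ into a polynomial rate, once the $q$-power is accounted for, and the $4$ comes from the polynomial moment factor. The constants depend only on $\alpha$.
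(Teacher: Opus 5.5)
Your proof works, but by a different route from the paper's. For ii), the paper truncates by surface, $f=f1_{<\varepsilon}+f1_{\geq\varepsilon}$:
\begin{itemize}
\item the large part costs $h(z)\varepsilon^{-1}\Pp_z(\tau\geq t)$;
\item the small part, both $M_t1_{<\varepsilon}$ and $h(z)\pi_z(1_{<\varepsilon}/h)$, is controlled by Proposition~\ref{job}~ii);
\item then it takes $\varepsilon=L\ell/(t+1)^{2\alpha}$.
\end{itemize}
The factor $(L\vee\ell\vee1)^{3\alpha}$ comes from compensating $\varepsilon^{-1}$, and the power $4$ from $(1+L+\ell)^3\varepsilon_{\ell,L}$. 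For i), the paper uses Proposition~\ref{job}~ii) with $\varepsilon=L\ell$.

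You instead separate $1_{\tau>t}$ from $1/h(Y_\tau)$ by H\"older. This needs a harmonic moment of order $p\in(1,2)$ of the frozen sticks, and your extraction of it from Lemma~\ref{lemme:useful} is correct. For $p<2$ the last integral $\int e^{-x_k}x_k^{1-p}\,v_{x_{k-1}}(dx_k)$ is bounded uniformly, and the remaining sum is $u(L)\le 1+2eL$. This gives $\E_{(L,0)}[(L^1_{\tau_1})^{-p}]\le L^{-p}+C_p(L^{-1}+1)$. Your route bypasses Proposition~\ref{job}~ii) and its $\varepsilon\log(1/\varepsilon)$ bookkeeping, and gives a better dependence on the initial sizes. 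In part i) it gives $C(1+L)(1+\ell)$ instead of a cubic bound. The paper's truncation needs only first harmonic moments, and as a by-product it gives a quantitative bound on the small-surface mass $h(z)\pi_z(1_{<\varepsilon}/h)$ of the limit.

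The obstacle you describe at small sides is not real; it comes from dropping the $1/p$-th root in H\"older. In fact $L\,\E_{(L,0)}[(L^1_{\tau_1})^{-p}]^{1/p}\le 1+C_p^{1/p}(L^{1-1/p}+L)\le C_p'(1+L)$. Hence $L\ell\,\E_z[h(Y_\tau)^{-p}]^{1/p}\le C_p(1+L)(1+\ell)$ uniformly, with no singularity. You should delete the ``no-jump'' patch. As written it would not repair a genuine $L^{1-p}\ell^{1-p}$ blow-up anyway: $e^{-ct}$ does not compensate it as $L\to0$, and the case $L\le 1<\ell$ is not treated.

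With $p=3/2$ fixed, no optimization in $q$ is needed. You obtain $C_\alpha\|f\|_\infty(1+L)(1+\ell)(1+L+\ell)^{\alpha}(1+t)^{-\alpha}\le C_\alpha\|f\|_\infty(1+L+\ell)^{2+\alpha}(1+t)^{-\alpha}$. This implies the claim since $2+\alpha<3\alpha$ for $\alpha>1$. Your closing paragraph's account of a ``4'' and a ``$3\alpha$'' does not describe what your argument actually produces.

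Finally, in the reduction to initial age $0$, the bound $e^{-L'}/L'+2e$ is decreasing, not increasing, in $L'$. What makes the first-jump argument uniform in $a$ is the weight $L'/L$ in the jump kernel. So you need $L'\,\E_{(L',0)}[1/L^1_{\tau_1}]\le 1+2eL'$, and for ii) the integrability of $L'^{1-p}$ against the jump density, which holds for $p<2$.
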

This provides  (arbitrary) polynomial speed of convergence, with quantitative control in terms of  the initial size $(L,\ell)$. This will  will be useful   for forthcoming  convergences, including $L^2$ convergence of the next section.
\begin{proof} For $z=(L,\ell,a)\in \R_+^3$ and $t\geq 0$ and $f$ non negative and bounded,
we first observe that
$$M_t(f1_{\geq L\ell})(z)\leq \parallel f \parallel_{\infty}M_t1_{\geq L\ell}(z)\leq \parallel f \parallel_{\infty}e^{-t\wedge (L-a)}.$$
Indeed, $M_t1_{\geq L\ell}(z)$ evaluates the probability that the initial rectangle $(L,\ell,a)$ has not fragmented yet at time $t$, when $t\leq L-a$, while it is frozen after time $L-a$.
Writing $f=f1_{\geq L\ell}+f1_{<L\ell}$, the first part $i)$ is then a consequence of Proposition \ref{job} $ii)$ for $\varepsilon=L\ell$. 

For the convergence, we need to take care of small fragments, since we divide by $h$ which vanishes at $0$. We first consider
\begin{align*}
\vert M_t(f1_{\geq \varepsilon)}(z)-h(z)\pi_z(f1_{\geq \varepsilon}/h) \vert &=
h(z) \vert \E_z(f1_{\geq \varepsilon}/h (Y_t))-\E_z(f1_{\geq \varepsilon}/h(Y_{\tau})) \vert\\
&\leq h(z) \parallel f1_{\geq \varepsilon }/h \parallel_{\infty}\mathbb P_z(\tau \geq t).
\end{align*}
Letting $t$ goes to infinity, the right hand side goes to $0$ by  Proposition \ref{job} $i)$. 
Using  then  Proposition \ref{job2} $i)$ for the left hand side and letting $\varepsilon \downarrow 0$ yields by monotonicity 
\begin{equation}
\label{massez}
h(z)\pi_z(f/h)=\lim_{\varepsilon \rightarrow 0} h(z)\pi_z(f1_{\varepsilon}/h)
 \leq C\parallel f \parallel_{\infty}\big( 1+L+\ell\big)^3.  
\end{equation}
Similarly, for any $\varepsilon>0$,
\begin{align*}
h(z)\pi_z(1_{<\varepsilon}/h) & 
=\lim_{\eta\rightarrow 0} h(z)\pi_z(1_{\eta\leq . <\varepsilon}/h)\\
& =\lim_{\eta\rightarrow 0} \limsup_{t\rightarrow \infty} M_t1_{\eta\leq . <\varepsilon}\leq C\big( 1+L+\ell\big)^3 
    \varepsilon_{\ell,L}
    \end{align*}
    by using now Proposition \ref{job} $ii)$.
Let us proceed similarly with $0\leq f=f1_{<\varepsilon}+f1_{\geq\varepsilon} \leq 1$ and use
\begin{align*}
\vert M_tf(z)-h(z)\pi_z(f/h) \vert &\leq h(z)\varepsilon^{-1}\mathbb P_z(\tau \geq t)\\
&\qquad +M_t(1_{< \varepsilon})+h(z) \pi_z(1_{< \varepsilon}/h).
\end{align*}
Using respectively Proposition \ref{job} $i)$ for the first term and   Proposition \ref{job} $ii)$ for the second term and the previous inequality for $h(z) \pi_z(1_{< \varepsilon}/h) $, we get
\begin{align*}
\vert M_tf(z)-h(z)\pi_z(f/h) \vert &\leq 2C_F L \ell\varepsilon^{-1}
\exp\left(-\frac{c_Ft}{2(L\vee \ell \vee 1)}\right) +2 C\big( 1+\ell+L\big)^3 \varepsilon_{\ell,L}.
\end{align*}
We choose
$$\varepsilon=\frac{L\ell }{(t+1)^{2\alpha}}$$
for $\alpha>1$  fixed. It is smaller than $L\ell$ and using also that $\varepsilon \log(1+1/\varepsilon)$ is dominated by $\sqrt{\varepsilon}$, we get for  any $L,\ell,t$ in $\R_+$,
\begin{align}
 | M_t f(z) - h(z) \pi_z(f/h)| &\leq L l \varepsilon^{-1}  \exp\left(-\frac{c_F}{2(L\vee \ell \vee 1)}t\right) +
 \frac{C'\big( 1+\ell+L\big)^4}{(t+1)^{\alpha}}. \nonumber 
 \end{align}
Using that $(1+x)^{3\alpha}e^{-x}$ is bounded on $\R_+$ by $C_{\alpha}$ and $x=c_Ft/2(L\vee \ell \vee 1)$, we obtain that  
$$L \ell \varepsilon^{-1}  \exp\left(-\frac{c_F}{2(L\vee \ell \vee 1)}t\right) \leq C_{\alpha}\frac{(t+1)^{2\alpha}}{(1+x)^{3\alpha}}\leq C_{\alpha}'\frac{(1+L\vee \ell \vee 1)^{3\alpha}}{(t+1)^{\alpha}},$$ which proves  $ii)$ for any $f$.
\end{proof}

\subsection{Infinite rectangles and immigration}
We can now fully describe the first moment semigroup, by using that it  is a branching structure with immigration of finite rectangles and plug the estimates obtain for the latter. \\

Let us  start from one rectangle with one infinite side and consider $M_tf(z_{\ell})$ for $z_{\ell}=(\infty, \ell,0)$. When this initial rectangle fragments, it creates a  rectangle with both length and width finite (orthogonal offspring) and a rectangle with one infinite side of the same type $(\infty, \ell,0)$ but different label (straight offspring).  These two rectangles then evolve independently by branching property.  The rectangle with infinite side then does the same thing again and again, creating a sequence of times at which finite rectangles are created from $z_{\ell}=(\infty, \ell,0)$. We will denote this sequence 
by $(T_i)_{i\geq 1}$ which follows a Poisson point process on $\R_+$ with parameter $1$. Writing $T_0=0$,  these successive divisions hence yield a finite rectangle $(\ell,T_{i}-T_{i-1},0)$ and an infinite rectangle $(\infty,\ell,0)$ at time $T_i$ for $i\geq 1$. 
 At any time there is  only one  rectangle with  infinite length  and  its width is always equal to $\ell$. Let us focus on   on finite rectangles produced by  the initial rectangle $\delta_{z_{\ell}}$. We thus    take $f$ supported by $\R_+^3$ and decompose the population of finite rectangles depending on the time $T_i \leq t$ where their ancestor on the $x$ coordinate has branched. We   get for any $t\geq 0$,
\begin{align*}
M_tf(z_{\ell})&=\sum_{i\geq 1} \E\left(1_{T_i\leq t} M_{t-T_i}f(\ell,T_{i}-T_{i-1},0)\right)\\
&=\sum_{i\geq 1} \E\left(1_{ T_{i}-T_{i-1}\leq t-T_{i-1}} M_{t-T_{i-1}-(T_{i}-T_{i-1})}f(\ell,T_{i}-T_{i-1},0)\right)\\
&=\sum_{i\geq 1} \E\left(F_{\ell}(t-T_{i-1})\right),
\end{align*}
where for $v\in \mathbb R$, we define
$$F_\ell(v)= \E(1_{ \Delta\leq v}M_{v-\Delta}f(\ell,\Delta,0)),$$
with $\Delta$ exponential random variable with parameter one.
We get
\begin{align}
\label{expressimm}
M_tf(z_\ell)&=F_\ell(t)+\int_0^tF_\ell(t-s)ds.
\end{align}
We start  from infinite rectangle $z_{\ell}$ and the lengths of  finite rectangles created are no longer bounded. However, the sizes of these finite rectangles will be dominated by exponential distribution, due to the Poisson arrival of branching events. This makes the following norm relevant for the forthcoming estimations
\begin{equation}
\label{normel}
\parallel f \parallel_{\ell,\infty}=\sup_{\substack{(x,y,a,a')\in \mathbb R_+^4 \\  a\leq x,\,  \,  a'\leq y\leq \ell}} (\vert f(x,y,a)\vert +\vert f(y,x,a')\vert)e^{-3x/4}.
\end{equation}
We can now describe the first moment and prove that the asymptotic profile is given by
\begin{equation}
\label{pibardef}
\Pi_{\ell}(f)=\ell\int_0^{\infty} e^{-x} x \pi_{(\ell,x,0)}(f/h)dx
\end{equation}
and we recall that $\pi_{(\ell,x,0)}$ has been defined in \eqref{defpiz}.
\begin{prop} \label{propinftyl} For any $\ell\in \R_+$ and $t\geq 0$ and  $f$ measurable and with support in $\R_+^3$ such that $\parallel f \parallel_{\ell,\infty}<\infty$, we have 
$$M_tf(\infty, \ell,0) \leq C\parallel f \parallel_{\ell,\infty}(1+t)(1+\ell)^3.$$  
and 
  $$ \big\vert M_tf(\infty,\ell,0) -t\Pi_\ell(f)\big\vert\leq  C_{\alpha}\parallel f \parallel_{\ell,\infty}(1+\ell)^{4\vee 3\alpha},$$
for any $\alpha>1$,  where $C$  (resp. $C_{\alpha}$) is a (universal) constant  (resp. only depends on $\alpha$). 
   \end{prop}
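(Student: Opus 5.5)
The plan is to start from the renewal representation \eqref{expressimm}, $M_tf(z_\ell)=F_\ell(t)+\int_0^tF_\ell(t-s)\,ds$ with $F_\ell(v)=\E(1_{\Delta\le v}M_{v-\Delta}f(\ell,\Delta,0))$, and to feed in the finite-rectangle estimates of Proposition~\ref{job2}. The finite rectangles produced have sides $(\ell,\Delta)$ with $\Delta$ exponential. Proposition~\ref{job2} is stated with $\parallel f\parallel_\infty$, but its proof only uses bounds on $f$ over the rectangles reachable from $(\ell,\Delta,0)$. All descendants have both sides bounded by $\ell\vee\Delta$, hence one side at most $\ell$ and the other at most $\Delta$, or both at most $\ell$. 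Thus on this range $|f|\le \parallel f\parallel_{\ell,\infty}e^{3(\ell\vee\Delta)/4}$. I would therefore apply Proposition~\ref{job2} to $f$ restricted to this set, which gives
\begin{align*}
 M_s f(\ell,\Delta,0)&\le C\parallel f\parallel_{\ell,\infty} e^{3\Delta/4}e^{3\ell/4}(1+\ell+\Delta)^3,\\
 |M_sf(\ell,\Delta,0)-\ell\Delta\,\pi_{(\ell,\Delta,0)}(f/h)|&\le C_\alpha \parallel f\parallel_{\ell,\infty} e^{3(\ell\vee\Delta)/4}\frac{(1+\ell+\Delta)^{4\vee3\alpha}}{(1+s)^\alpha}.
\end{align*}
The factor $e^{3\ell/4}$ is bad. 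To avoid it I would refine the norm bookkeeping: the definition of $\parallel\cdot\parallel_{\ell,\infty}$ only weights by $e^{-3x/4}$ the coordinate $x$ that is not constrained by $\ell$. So for rectangles with both sides $\le\ell$ the weight is $e^{-3\min/4}$, and I expect the bound $|f(x,y,a)|\le\parallel f\parallel_{\ell,\infty}e^{3\Delta/4}$ on all descendants, since at least one side is $\le \Delta$ and the other is $\le\ell$. That is the key domination: $\sup|f|\le \parallel f\parallel_{\ell,\infty}e^{3\Delta/4}$ over the relevant set.

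Next I integrate against the law of $\Delta$. Since $\E[e^{3\Delta/4}(1+\ell+\Delta)^k]\le C_k(1+\ell)^k$, the first display gives $F_\ell(v)\le C\parallel f\parallel_{\ell,\infty}(1+\ell)^3$ uniformly in $v$. Plugging this into \eqref{expressimm} gives the first claim $M_tf\le C\parallel f\parallel_{\ell,\infty}(1+t)(1+\ell)^3$.

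For the asymptotics, define $F_\ell(\infty)=\E[\ell\Delta\,\pi_{(\ell,\Delta,0)}(f/h)]=\Pi_\ell(f)$. Then split
\[
|F_\ell(v)-\Pi_\ell(f)|\le \E\big[1_{\Delta\le v}|M_{v-\Delta}f-\ell\Delta\pi(f/h)|\big]+\E\big[1_{\Delta>v}\,\ell\Delta\pi_{(\ell,\Delta,0)}(|f|/h)\big].
\]
The first term is bounded by $C_\alpha\parallel f\parallel_{\ell,\infty}\E[e^{3\Delta/4}(1+\ell+\Delta)^{4\vee3\alpha}(1+v-\Delta)^{-\alpha}]$. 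Splitting into $\Delta\le v/2$ and $\Delta>v/2$, this is at most $C_\alpha\parallel f\parallel_{\ell,\infty}(1+\ell)^{4\vee3\alpha}(1+v)^{-\alpha}$, because the tail $\E[e^{3\Delta/4}\ldots;\Delta>v/2]$ decays like $e^{-v/8}$ times a polynomial. The second term, via \eqref{massez}, is bounded by $C\parallel f\parallel_{\ell,\infty}(1+\ell)^3e^{-v/8}$. Since $\alpha>1$, $\int_0^\infty|F_\ell(v)-\Pi_\ell(f)|\,dv\le C_\alpha\parallel f\parallel_{\ell,\infty}(1+\ell)^{4\vee3\alpha}$. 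Hence $|M_tf-t\Pi_\ell(f)|\le |F_\ell(t)|+\int_0^t|F_\ell(u)-\Pi_\ell(f)|\,du$, which yields the second claim.

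The main obstacle is the first step: justifying that Proposition~\ref{job2} applies with $\parallel f\parallel_\infty$ replaced by the supremum of $f$ over the rectangles actually reachable from $(\ell,\Delta,0)$. This needs the monotonicity of sides along the genealogy, together with the check that the $\parallel\cdot\parallel_{\ell,\infty}$ weight indeed produces $e^{3\Delta/4}$ and not $e^{3\ell/4}$. If the latter fails for rectangles with both sides $\le\ell$, I would instead absorb the factor into the polynomial by restricting $\ell$, or revisit the norm's symmetric term $f(y,x,a')$.
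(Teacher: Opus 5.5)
Your proposal is correct and is essentially the paper's argument. The paper proves exactly your bounds on $F_\ell$ in Lemma~\ref{estilem}: it restricts $f$ to the set $E_{\ell,x}$ of states reachable from $(\ell,x,0)$, bounds $\sup_{E_{\ell,x}}|f|$ by $\parallel f\parallel_{\ell,\infty}e^{3x/4}$, applies Proposition~\ref{job2} and \eqref{massez}, integrates against $e^{-x}dx$, and then concludes through \eqref{expressimm} just as you do.

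The domination by $e^{3\Delta/4}$ that you only ``expect'' does hold, but the reason is that the set of states with one side at most $\ell$ and the other at most $\Delta$ is stable under $\tau^\rightarrow$ and $\tau^\curvearrowright$. This is a one-line induction: $(L-a,\ell')$ keeps the roles of the two sides, and $(\ell',a)$ with $a<L$ swaps them. The weaker fact that both sides stay below $\ell\vee\Delta$ would not give it, and once the stability is checked your fallback in the last paragraph is unnecessary.
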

As expected from the immigration structure, the mass starting from  one infinite rectangle grows linearly and the renormalized semigroup converges at polynomial speed. The explicit control with respect to the initial width of the rectangle will be useful below.\\

For the proof, we first estimate $F_{\ell}$ using the previous results on finite rectangles for the first moment $M$ and we then derive the asymptotic behavior of $M_tf(\infty, \ell, 0)$ for $t$ large.
\begin{lemme}
\label{estilem}
There exists $C>0$ such that
for any  $f$  measurable function $f : \mathcal \rightarrow \R_+$ with support in $\R_+^3$ such that $\parallel f \parallel_{\ell,\infty} <\infty$  and any $l\geq 0$  and $v\geq 0$, and any $\alpha>1$,
$$\vert F_\ell(v)-\Pi_\ell(f)\vert\leq  C\parallel f \parallel_{\ell,\infty}\frac{(1+\ell)^{4\vee 3\alpha}}{ (1+v)^{\alpha}},$$
 where we recall that $\Pi_\ell$ is defined in \eqref{pibardef} and $\parallel . \parallel_{\ell,\infty}$ in \eqref{normel}.
Moreover, 
$$F_{\ell}(v)\leq C\parallel f \parallel_{\ell,\infty}(1+\ell)^3.$$
\end{lemme}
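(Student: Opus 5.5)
Write $F_\ell(v)=\int_0^v e^{-x}M_{v-x}f(\ell,x,0)\,dx$ and, by \eqref{pibardef}, $\Pi_\ell(f)=\int_0^\infty e^{-x}h(\ell,x,0)\pi_{(\ell,x,0)}(f/h)\,dx$ since $h(\ell,x,0)=\ell x$. The plan is to compare the two integrands pointwise in $x$ via Proposition~\ref{job2}, and to control the tail $x>v$ of $\Pi_\ell$ separately. The only subtlety is that $f$ is not bounded, only dominated in the $\parallel\cdot\parallel_{\ell,\infty}$ norm. So I first reduce to bounded functions. Starting from $(\ell,x,0)$, all rectangles in the progeny have sides bounded by $\ell$ and $x$, because fragmentation $\tau^\rightarrow,\tau^\curvearrowright$ never increases sides. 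Also both sides are at most $\ell\vee x$, with one side coming from a sub-interval of $[0,\ell]$ or $[0,x]$. Hence on the support of $M_{s}(\cdot)(\ell,x,0)$ and of $\pi_{(\ell,x,0)}$, the definition \eqref{normel} gives $|f|\leq \parallel f\parallel_{\ell,\infty}e^{3x/4}$. In the case where the rectangle is $(y,x',a)$ with $y\le \ell$, we use the second term of the norm; in the case where it is $(x',y,a)$ with $y\le\ell$, we use the first. Either way the bound is $e^{3x'/4}\le e^{3x/4}$. The point to check carefully is that every descendant indeed has one side $\le\ell$ and the other $\le x$; this follows from monotonicity of sides under fragmentation. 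So I may replace $f$ by $f_x:=f\,1_{\{\text{sides}\le \ell\vee x\}}$ with $\parallel f_x\parallel_\infty\le \parallel f\parallel_{\ell,\infty}e^{3x/4}$.

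\textbf{Upper bound.} Proposition~\ref{job2} i) gives $M_{v-x}f(\ell,x,0)\le C\parallel f\parallel_{\ell,\infty}e^{3x/4}(1+\ell+x)^3$. Hence $F_\ell(v)\le C\parallel f\parallel_{\ell,\infty}\int_0^\infty e^{-x/4}(1+\ell+x)^3dx\le C\parallel f\parallel_{\ell,\infty}(1+\ell)^3$. The same bound, using \eqref{massez}, shows that $\Pi_\ell(f)$ is finite and that $h(\ell,x,0)\pi_{(\ell,x,0)}(f/h)\le C\parallel f\parallel_{\ell,\infty}e^{3x/4}(1+\ell+x)^3$.

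\textbf{Convergence.} I split
$$F_\ell(v)-\Pi_\ell(f)=\int_0^v e^{-x}\big(M_{v-x}f(\ell,x,0)-\ell x\,\pi_{(\ell,x,0)}(f/h)\big)dx-\int_v^\infty e^{-x}\ell x\,\pi_{(\ell,x,0)}(f/h)dx.$$
The tail term is at most $C\parallel f\parallel_{\ell,\infty}\int_v^\infty e^{-x/4}(1+\ell+x)^3dx\le C\parallel f\parallel_{\ell,\infty}(1+\ell)^3(1+v)^3e^{-v/4}$. Since $e^{-v/8}(1+v)^{3+\alpha}$ is bounded, this is $\le C_\alpha\parallel f\parallel_{\ell,\infty}(1+\ell)^3(1+v)^{-\alpha}$.

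For the first term, Proposition~\ref{job2} ii) applied to $f_x$ at time $v-x$ bounds the integrand by $C_\alpha\parallel f\parallel_{\ell,\infty}e^{-x/4}(1+\ell+x)^{4\vee3\alpha}(1+v-x)^{-\alpha}$. I split the integral at $x=v/2$. On $x\le v/2$ we have $(1+v-x)^{-\alpha}\le 2^\alpha(1+v)^{-\alpha}$, and $\int e^{-x/4}(1+\ell+x)^{4\vee3\alpha}dx\le C_\alpha(1+\ell)^{4\vee3\alpha}$. On $x>v/2$, I bound $(1+v-x)^{-\alpha}\le1$ and use $e^{-x/8}\le e^{-v/16}$, which absorbs any power of $(1+v)$. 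Together these give the claimed bound $C\parallel f\parallel_{\ell,\infty}(1+\ell)^{4\vee3\alpha}(1+v)^{-\alpha}$, with the constant depending on $\alpha$. Beyond this routine splitting, the main thing to verify is the support reduction above, so that the exponential weight $e^{3x/4}$ is beaten by $e^{-x}$.
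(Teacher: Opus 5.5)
Your proposal is correct and follows the paper's proof essentially step by step: you restrict to the reachable states (one side $\leq \ell$, the other $\leq x$, which is the paper's set $E_{\ell,x}$), use Proposition~\ref{job2} for the integrand on $[0,v]$, \eqref{massez} for the tail $x>v$, and Proposition~\ref{job2}~$i)$ for the upper bound, and your split at $x=v/2$ is an equivalent substitute for the paper's inequality $1+v-x\geq (1+v)/(1+x)$. One slip needs fixing: $f_x$ must be $f$ times the indicator of that set $E_{\ell,x}$, not of $\{\text{sides}\leq \ell\vee x\}$, because on the latter set $\parallel f\parallel_{\ell,\infty}$ does not give $\vert f\vert\leq \parallel f\parallel_{\ell,\infty}e^{3x/4}$ (if $x>\ell$ both sides may lie in $(\ell,x]$, where $f$ is uncontrolled, and if $\ell>x$ you only get $e^{3\ell/4}$).
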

\begin{proof}
For $v\geq 0$,
\begin{align*}
F_{\ell}(v)&=\int_0^v e^{-x}M_{v-x}f(\ell,x,0)dx\\
&=\int_{0}^{v}e^{-x} \ell x \pi_{(\ell,x,0)}(f/h)dx+
\int_0^v e^{-x}(M_{v-x}f(\ell,x,0)-\ell x \pi_{(\ell,x,0)}(f/h))dx 
\end{align*}
and
\begin{align*}
\vert F_{\ell}(v)-\Pi_\ell(f)\vert 
&\leq \int_{v}^{\infty} e^{-x} \ell x \pi_{(\ell,x,0)}(f/h)dx +\bigg\vert \int_0^v e^{-x}(M_{v-x}f(\ell,x,0)-\ell x \pi_{(\ell,x,0)}(f/h))dx\bigg\vert. 
\end{align*}
We use  \eqref{ctrfini} for the last term and the fact that finite rectangles only decrease. More precisely,  we  consider the possible states starting from a rectangle $(\ell,x,0)$, given by
\begin{align*}
    E_{\ell,x}&=\{(y,z,a) \in \R_+^3 \,  : \,  0\leq a\leq y \leq \ell, 0\leq z\leq x \}\\
    &\qquad \cup \,  \{(y,z,a)\in \R_+^3 : 0\leq a\leq y \leq x, 0\leq z \leq \ell \}.
 \end{align*}
Hence, $M_{v-x}f(\ell,x,0)=M_{v-x}(f1_{E_{\ell,x}})(\ell,x,0)$
and \eqref{ctrfini} yields
 $$\left\vert M_{v-x}f(\ell,x,0)-\ell x \pi_{(\ell,x,0)}(f/h)\right\vert \leq C_{\alpha}\sup_{E_{\ell,x}} \vert f \vert \frac{(1+\ell+x  )^{4\vee 3\alpha}}{(v-x+1)^{\alpha}}$$
 and integrating this inequality we get
\begin{align*}
    &\big\vert \int_0^v e^{-x}(M_{v-x}f(\ell,x,0)-\ell x \pi_{(\ell,x,0)}(f/h))dx \big\vert\\
    &\qquad \leq C_{\alpha} \parallel f \parallel_{\ell,\infty} \int_0^v e^{-x/4} \frac{(1+\ell+x  )^{4\vee 3\alpha}}{(v-x+1)^{\alpha}} dx \leq  C_{\alpha}' \parallel f \parallel_{\ell,\infty} \frac{(1+\ell)^{4\vee 3\alpha}}{ (1+v)^{\alpha}}
    \end{align*}
    since $v-x+1\geq (1+v)/(1+x)$ for $x\in [0,v]$. 
    Using \eqref{massez}, we also have
\begin{align*}
\int_{v}^{\infty}  e^{-x}\ell x \pi_{(\ell,x,0)}(f/h)dx&\leq C\parallel f \parallel_{\ell,\infty} \int_{v}^{\infty}  e^{-x/4}(1+x+\ell)^3dx \\
& \leq C \parallel f \parallel_{\ell,\infty} e^{-v/4}(1+v)^3(1+\ell)^3 \int_0^{\infty} e^{-u/4}\left(1+\frac{u}{1+v}\right)^3 du\\
&\leq C'\parallel f \parallel_{\ell,\infty} e^{-v/4}(1+v)^3(1+\ell)^3.
\end{align*}
Combining these results and using that $\exp(-v/4)(1+v)^3$ is dominated by $1/(1+v)^{\alpha}$ yields the first part of the lemma.
Similarly
$$M_{s}f(\ell,x,0)\leq
\parallel f \parallel_{\ell,\infty} e^{x/4} M_{s}1(\ell,x,0)\leq C \parallel f \parallel_{\ell,\infty} e^{x/4} (1+\ell+x)^3 $$
from Proposition \ref{job2} $i)$.  
Then, integrating this estimate along time,  
\begin{align*}
F_\ell(v)&=\int_{0}^v e^{-x}M_{v-x}f(\ell,x,0)dx\leq C'\parallel f \parallel_{\ell,\infty}(1+\ell)^3,
\end{align*}
 which ends the proof.
\end{proof}
Combining   \eqref{expressimm} and the estimates of Lemma \ref{estilem} proves Proposition \ref{pibardef}.

\subsection{Stationary profile}
Let us now describe 
the first moment semigroup starting from the rectangle whose length and width are infinite, i.e. $M_tf(z_{\infty})$ for $z_{\infty}=(\infty, \infty,0)$. It provides the full collection of rectangles in our model. At time $t$ there is only one  doubly infinite rectangle, whose age is increasing at speed one. This rectangle produces rectangles at times $(T_i)_{i\geq 1}$, which follow a Poisson process with parameter~$1$. The length, width and age of the rectangle produced at time $T_i$, for $i\geq 1$,  is $(\infty,T_{i}-T_{i-1}, 0) $, where $T_0=0$. Focusing again on the finite rectangles, we consider $f$ supported by $\R_+^3$, and get
\begin{align*}
M_tf(z_{\infty})&=\sum_{i\geq 1} \E\left(1_{ T_i\leq t} M_{t-T_i}f(\infty,T_{i}-T_{i-1},0)\right)\\
&=\sum_{i\geq 1} \E\left(G(t-T_{i-1})\right),
\end{align*}
by writing (again) $T_i=T_{i-1}+T_{i}-T_{i-1}$ and conditioning by $T_{i-1}$ and defining 
$$G(v)= \E(1_{\Delta\leq v}M_{v-\Delta}f(\infty,\Delta,0))$$
for $v\in \mathbb R$,  with $\Delta$ exponential random variable with parameter one.
We get
\begin{align}
\label{intemps}
M_tf(z_{\infty})&=G(t)+\int_0^tG(t-s)ds. 
\end{align}
We obtain the following counterpart of Proposition \ref{propinftyl}, using the norm
$$\parallel f\parallel =\sup_{\substack{ x,a,y  \in \R_+, \\ a\leq x}} \vert f(x,y,a) \vert e^{-3(x+y)/4}.$$
Its proves that the asymptotic profile is given by   the following measure on $\R_+^3$, 
  \begin{align}
\Pi(f) &=\int_0^{\infty} e^{-\ell}\Pi_\ell(f) d\ell 
=\int_{\R_+^2} L\ell e^{-(L+\ell)}\, \pi_{(L,\ell,0)}({f}/{h})  \, d\ell \, dL.
\end{align}
where we recall that $\Pi_\ell$ is defined in \eqref{pibardef} and $f$ is a measurable non-negative function.
\begin{prop} \label{propinftyinfty} There exists  a constant $C$ such that for any  measurable function  $f: \mathcal X \rightarrow \R_+$ with support in $\R_+^3$  and such that $\parallel f \parallel<\infty$ and for any $t\geq 0$,
  $$M_tf(\infty,\infty, 0) \leq C\parallel f \parallel (1+t)^2$$  and 
  $$ \big\vert M_tf(\infty,\infty, 0) -t^2\Pi(f) \big\vert\leq  C\parallel f \parallel t.$$
Moreover,  the measure  $\Pi$ on $\R_+^3$  satisfies
$$\Pi(dL,d\ell,da)=g(L,\ell)dLd\ell \delta_L(da)$$
where its density 
 $g$ with respect to Lebesgue measure for the first and second coordinate writes 
\begin{align*}
 g(L,\ell)=\sum_{k\geq 1} \frac{(k+1)}{((k-1)!)^2} \left( ke^{-(k+1)L-k\ell} +\frac{1+k}{k}e^{-(1+k)L-(1+k)\ell}\right).
\end{align*}
\end{prop}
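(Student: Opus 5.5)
The plan has two parts: first the asymptotics of $M_tf(\infty,\infty,0)$, obtained by integrating Proposition~\ref{propinftyl} through the renewal formula \eqref{intemps}, and then the identification of the density $g$ through a fixed-point equation.

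\textbf{Asymptotics.} By the branching property at the first fragmentation of the rectangle produced at time $T_i$,
$$G(v)=\int_0^v e^{-x}M_{v-x}f(\infty,x,0)\,dx .$$
For $f$ with $\parallel f\parallel<\infty$ the definitions of the norms give
$$\parallel f\parallel_{x,\infty}\leq 2\parallel f\parallel e^{3x/4}.$$
Plugging this into the two estimates of Proposition~\ref{propinftyl} (with $\alpha$ fixed, e.g.\ $\alpha=2$) yields
$$M_{v-x}f(\infty,x,0)=(v-x)\Pi_x(f)+R(v,x),\qquad |R(v,x)|\leq C\parallel f\parallel e^{3x/4}(1+x)^{6},$$
and moreover $\Pi_x(f)\leq C\parallel f\parallel e^{3x/4}(1+x)^3$. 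The weight $e^{-x}$ leaves an integrable factor $e^{-x/4}\mathrm{poly}(x)$, so
$$G(v)=v\int_0^\infty e^{-x}\Pi_x(f)\,dx+O(\parallel f\parallel)=v\,\Pi(f)+O(\parallel f\parallel).$$
Here the terms $-\int_0^v x e^{-x}\Pi_x(f)\,dx$ and the tail $\int_v^\infty$ are absorbed in the error, uniformly in $v$. In particular $G(v)\leq C\parallel f\parallel(1+v)$.

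Inserting this in \eqref{intemps} gives the bound $C\parallel f\parallel(1+t)^2$ and an expansion whose leading term is $\int_0^t s\,\Pi(f)\,ds$, with remainder $O(\parallel f\parallel(1+t))$. Integrating $s$ gives $\tfrac{t^2}{2}\Pi(f)$. The statement as written has $t^2\Pi(f)$, whereas the abstract's limit $\Pi(f)/(2\Pi(h))$ suggests the factor $1/2$ belongs with $\Pi$; I would recheck which normalisation of $\Pi$ is intended before writing this step. The error is $O(t)$ for $t\geq1$; for $t\leq1$ the error bound follows from the crude bound.

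\textbf{Structure of $\Pi$.} Since $h\,\pi_z(f/h)=\lim_t M_tf(z)$ (Proposition~\ref{job2}), set $m_{(L,\ell)}(f):=L\ell\,\pi_{(L,\ell,0)}(f/h)$. Then $\Pi(f)=\int e^{-L-\ell}m_{(L,\ell)}(f)\,dL\,d\ell$, and every frozen rectangle has $a=L$, which explains the factor $\delta_L(da)$. Conditioning on the first event of a rectangle $(L,\ell,0)$, the measure $m$ satisfies
$$m_{(L,\ell)}=e^{-L}\delta_{(L,\ell,L)}+\int_0^L e^{-x}\big(m_{(L-x,\ell)}+m_{(\ell,x)}\big)\,dx .$$
The first term is freezing without fragmentation (probability $e^{-L}$); the integral is fragmentation at age $x$, producing $\tau^\rightarrow=(L-x,\ell,0)$ and $\tau^\curvearrowright=(\ell,x,0)$. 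Absolute continuity of the first two marginals follows because each frozen rectangle $(L',\ell',L')$ is produced through at least one integration in $L'$ and $\ell'$, apart from the explicit term $e^{-2L-\ell}$, which is itself a density.

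\textbf{Computing $g$ (main obstacle).} Integrating the recursion against $e^{-L-\ell}$ and changing variables turns it into a linear fixed-point equation for $g$:
$$g(L,\ell)=e^{-2L-\ell}+\mathcal K g(L,\ell),$$
where $\mathcal K$ consists of convolution-type operators in $(L,\ell)$ with exponential kernels. Uniqueness of the solution in the class of densities dominated by $e^{-3(L+\ell)/4}$ follows from iteration; this is the same Neumann-series argument as the bound $u(x)\leq 2ex$ for the kernel $v$ in Lemma~\ref{lemme:useful}.

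I would then plug in the ansatz $g=\sum_k\big(a_ke^{-(k+1)L-k\ell}+b_ke^{-(k+1)L-(k+1)\ell}\big)$. The operator $\mathcal K$ maps such exponentials into finite combinations of the same family, so the equation reduces to a recursion on $(a_k,b_k)$. I expect the solution to be
$$a_k=\frac{k(k+1)}{((k-1)!)^2},\qquad b_k=\frac{(k+1)^2}{k((k-1)!)^2}.$$
The delicate point is computing $\mathcal K$ exactly on exponentials, in particular that the swap $(L,\ell)\mapsto(\ell,x)$ shifts the exponent indices correctly, and then solving the coefficient recursion. As a check, I would verify $\Pi(h)$ against the direct computation $M_th=h$ composed with immigration.
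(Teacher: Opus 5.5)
Your asymptotic part follows the paper's route: the renewal formula \eqref{intemps} fed with Proposition~\ref{propinftyl}, done more carefully. Your factor $\tfrac12$ is right, and it is the printed statement that is off, not the normalisation of $\Pi$. That normalisation is fixed by the definition of $\Pi$ (equivalently by $g$), which gives $\Pi(h)=\int L\ell\, e^{-L-\ell}\,dL\,d\ell=1$. Lemma~\ref{idtmart} gives $M_th(\infty,\infty,0)=t^2/2+O(t)$. So $f=h$ violates the inequality as printed, and the correct claim is $|M_tf(\infty,\infty,0)-\tfrac{t^2}{2}\Pi(f)|\le C\parallel f\parallel t$, consistent with Theorem~\ref{Thprinc}. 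For $t\le 1$, use $G(v)\le C\parallel f\parallel v$ (the integrand of $G$ is bounded there). The crude $(1+t)^2$ bound does not work, since it does not vanish at $t=0$.

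\textbf{The gap: computing $g$.} Integrating the first-jump recursion against $e^{-L-\ell}$ does not give an equation for $g$. The straight term yields $\tfrac12\Pi$. The orthogonal term yields $\int e^{-\ell-2x}\,\widehat\pi_{\ell,x}\,d\ell\,dx=\Pi_{1,2}$, where $\Pi_{n,m}=\int e^{-nL-m\ell}\widehat\pi_{L,\ell}\,dL\,d\ell$. This is the frozen-state measure for a \emph{different} law of the initial rectangle, so you only obtain $\Pi=2e^{-2L-\ell}\,dL\,d\ell+2\Pi_{1,2}$. The backward recursion acts on the initial condition, while $g$ is a density in the frozen state, and by itself it does not relate $\Pi_{1,2}$ to $\Pi$. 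This is precisely why the paper works with the whole family $\Pi_{n,m}$: it derives $\Pi_{n,m}=\frac{(n+1)(m+1)}{n^2m^2}\Pi_{n+1,m+1}+R_{n,m}$ and iterates down to $n=m=1$.

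\textbf{How to repair your route: go forward.} Let $\nu=\sum_{n\ge 0}\mu K^n$ be the mean measure of all finite rectangles ever created (at age $0$) when the initial rectangle has law $\mu=e^{-L-\ell}\,dL\,d\ell$, with $K$ the one-generation offspring kernel. A created rectangle freezes unbroken with probability $e^{-L}$, so $g=e^{-L}\nu$, and $\nu=\mu+\nu K$ becomes
\[
g(L,\ell)=e^{-2L-\ell}+\int_L^\infty g(x,\ell)\,dx+e^{-L-\ell}\int_\ell^\infty e^{x}g(x,L)\,dx=:e^{-2L-\ell}+\mathcal K g(L,\ell).
\]
Set $E_k=e^{-(k+1)L-k\ell}$ and $F_k=e^{-(k+1)(L+\ell)}$. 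Then $\mathcal K E_k=\frac{E_k}{k+1}+\frac{F_k}{k}$ and $\mathcal K F_k=\frac{F_k}{k+1}+\frac{E_{k+1}}{k}$. Hence $a_1=2$, $b_k=\frac{k+1}{k^2}a_k$ and $a_k=\frac{k+1}{(k-1)^3}a_{k-1}$, which are exactly your (and the paper's) coefficients. Absolute continuity is then immediate by induction on $\mu K^n$.

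Drop the uniqueness claim. For densities dominated by $e^{-3(L+\ell)/4}$, the swap term $\int_\ell^\infty e^{x}g(x,L)\,dx$ diverges, so $\mathcal K$ is not even defined on that class. Instead sum $g=\sum_n\mathcal K^n(e^{-2L-\ell})$ directly. This is legitimate since $\nu=\sum_n\mu K^n$. It converges coefficientwise because $\mathcal K$ is triangular in the ordered basis $E_1,F_1,E_2,F_2,\dots$ with diagonal entries $1/(k+1)$.

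This forward equation is dual to the paper's Laplace family. It gives one equation on the density itself, at the cost of introducing the creation measure $\nu$.
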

We observe that the limiting profile $\Pi$ here is restricted to frozen rectangle, i.e. rectangles $(L,\ell,a)$ with $a=L$. The population of inactivated rectangle is of order of magnitude $t^2$, while The active rectangles are negligible. Actually, the second estimate of the proposition ensures that their  and the second estimate of their order of magnitude is $t$.
\begin{proof} Using the first estimate of
Proposition \ref{propinftyl},
\begin{align*}
    G(v)&\leq (1+v)\int_0^v \parallel f \parallel_{\ell,\infty}e^{-\ell}(1+\ell)^3 d\ell\\
    &\qquad \qquad\leq 2 (1+v)  \parallel f \parallel   \,\int_0^{\infty} e^{-\ell/4}(1+\ell)^3d\ell , 
    \end{align*}
which proves the first  by recalling \eqref{intemps}. We obtain similarly the second part using now the second estimate of Proposition \ref{propinftyl}.

Let us now identify the limiting measure $\Pi$.
For short, we write $$\pi_{L,\ell}(A)=\pi_{(L,\ell,0)}(A\times \R_+)$$   the measure on $\R_+^2$ giving the limiting  distribution of the length and width of the spine rectangle starting from  $(L,\ell,0)$ with age $0$ defined in \eqref{defpiz}. Using the first time when it branches, this family of (probability) measures satisfies:
$$\pi_{L,\ell}=\int_0^L dae^{-a}\left(\frac{L-a}{L}\pi_{L-a,\ell} +\frac{a}{L}\pi_{\ell,a}\right) +e^{-L}\delta_{L,\ell}$$
for $L,\ell \in \R_+^2$.
Then, its $h$ transform, i.e. the measure  defined by
$$\widehat{\pi}_{L,\ell}(\cdot)=L\ell
\pi_{L,\ell}(\cdot/h),$$ 
satisfies
$$\widehat{\pi}_{L,\ell}=\int_0^L da e^{-a}\left(\widehat{\pi}_{L-a,\ell} +\widehat{\pi}_{\ell,a}\right)+e^{-L}\delta_{L,\ell}.$$
We introduce now the following family of measures on $\R_+^2$, 
$$\Pi_{n,m}=\int_{[0,\infty)^2}e^{-nL-m\ell}\widehat{\pi}_{L,\ell} \, dL d\ell$$
for $n,m\geq 1$ and we want to compute
$\Pi_{1,1}=\Pi$.
The equation on $\widehat{\pi}_{L,\ell}$
yields : 
$$\Pi_{n,m}=\frac{1}{n+1}\Pi_{n,m}+\frac{1}{n}\Pi_{m,n+1}+\int_{[0,\infty)^2}e^{-(n+1)L-m\ell} \, \delta_{L,\ell} dLd\ell$$
We can gather the two terms involving the measure $\Pi_{n,m}$ and iterate
\begin{align*}
\Pi_{n,m}&=\frac{n+1}{n^2}\Pi_{m,n+1}+\frac{n+1}{n}\int_{[0,\infty)^2}e^{-(n+1)L-m\ell}\delta_{L,\ell} dLd\ell\\
&=\frac{(n+1)(m+1)}{n^2m^2}
\Pi_{n+1,m+1}+R_{n,m},
\end{align*}
with $R_{n,m}$ measure on $\R_+^2$ defined by
\begin{align*}
R_{n,m}&=\int_{[0,\infty)^2}\left(\frac{n+1}{n} e^{-(n+1)L-m\ell}+\frac{(n+1)(m+1)}{n^2m} e^{-(m+1)L-(n+1)\ell}\right)\delta_{L,\ell} dLd\ell.
\end{align*}
The series converges since we have an a priori bound on $\Pi_{n,m}$. Indeed by definition,  $\Pi_{n,m}(\R_+^2)$ is dominated by $1/nm$.
We get
$$\Pi_{n,m}=R_{n,m}+\sum_{j=1}^{\infty}\frac{(n+j)(m+j)}{n^2m^2\Pi_{k=1}^{j-1} (n+k)(m+k)} \, R_{n+j,m+j} $$
and $\Pi_{n,m}$ has a density $g_{n,m}$ with respect to Lebesgue measure, that can be computed. More precisely, 
$\Pi_{n,m}= g_{n,m}(L,\ell)dLd\ell,$
where
\begin{align*}
    g_{n,m}(L,\ell)&=\frac{n+1}{n}e^{-(n+1)L-m\ell}\\
    &\quad +\sum_{j=1}^{\infty}\frac{(n+j+1)(m+j)}{n^2m^2\Pi_{k=1}^{j-1} (n+k)(m+k)} e^{-(n+j+1)L-(m+j)\ell}\\
     &\quad + \frac{(n+1)(m+1)}{n^2m} e^{-(m+1)L-(n+1)\ell}\\
 &\quad +\sum_{j=1}^{\infty}\frac{(n+j+1)(m+j+1)}{n^2m^2(n+j)\Pi_{k=1}^{j-1} (n+k)(m+k)} e^{-(n+j+1)L-(m+j+1)\ell}
\end{align*}    
  Letting $n=m=1$ yields the density of $\Pi$ and ends the proof.
\end{proof}

\begin{figure}
\centering
\subcaptionbox{}
{\includegraphics[width=0.49\textwidth]{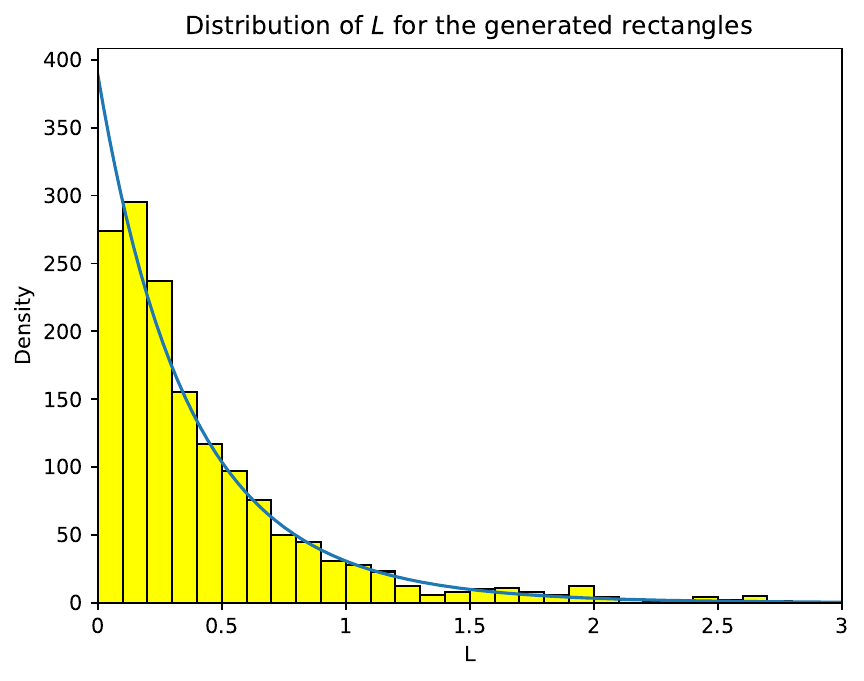}}
\subcaptionbox{}
{\includegraphics[width=0.49\textwidth]{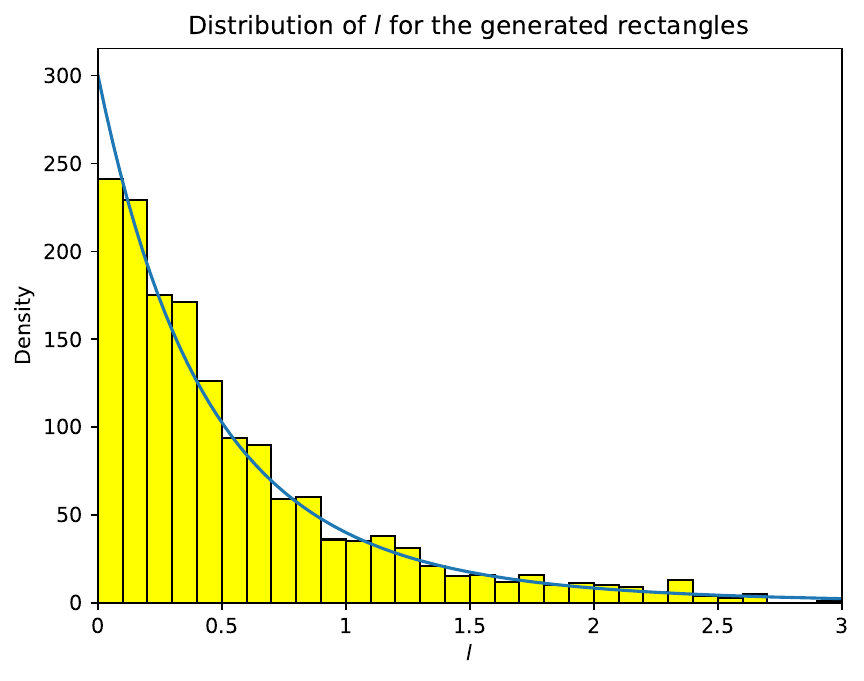}}
\caption{For the network simulation represented in Figure~\ref{fig:simu1}, we represent: (a) distribution of the length $L$ of the rectangles, compared to the formula given by Proposition \ref{propinftyinfty}; (b) distribution of the width $l$ of the rectangles, compared to the formula given by Proposition~ref{propinftyinfty}. }\label{fig:simu2}
\end{figure}

We complement this section by estimating the rectangles with one infinite size created by the original doubly infinite rectangle. We recall that $(T_i)_{i\geq 1}$ is the Poisson point process with parameter $1$ giving the branching times of $z_{\infty}=(\infty,\infty,0)$. We  write  $A_n(t)$ the age at time $t$ of the infinite rectangle,  with width $T_{n+1}-T_n$, whose branch has been created at time $T_n$.  We have for any $\ell_0,a_0 \in \R$, by conditioning on $\mathcal F_{T_{n+1}}$,
\begin{align}
    M_t 1_{\{\infty\}\times  [\ell_0, \infty)\times  [ a_0,\infty)} (z_{\infty})&\leq \sum_{n\geq 0} \E(1_{T_n \leq t, T_{n+1}-T_n\geq \ell_0, A_n(t)\geq a_0}) \nonumber\\
    &\leq \sum_{n\geq 0} \E(1_{T_n \leq t}).\mathbb P(T_{n+1}-T_n\geq \ell_0). \mathbb P(A_n(t)\geq a_0)\nonumber\\
    &\leq e^{-\ell_0-a_0}\E(\#\{ n\geq 1 : T_n\leq t\})\leq t e^{-\ell_0-a_0}. \label{cagrowlin}
\end{align}

\section{$L^2$ computations and convergence}
\label{laviedansL2}
We are now in position to describe the asymptotic behavior of the empirical measure using the branching structure with double level of immigration. We consider the empirical measure restricted to  finite rectangles, whose number grows as $t^2$ as time $t$ goes to infinity. Indeed,   the family of infinite rectangles is easy to describe since it is directly given by Poisson processes. Besides their number is negligible, growing (only) linearly along time. For the statement and quantitative estimates, we need to consider the suitable space functions, inherited from the previous section. Thus, we introduce the following set of functions whose exponential growth is limited :
$$\mathfrak F:=\{ f : \mathcal X \rightarrow \R, \, \, f \, \text{measurable}, \, \text{Support} f\subset \R_+^3, \,   \vertiii f<\infty \}, 
$$
where the norm $\vertiii f$ is defined by
$$\vertiii f:= \sup_{\substack{(L,\ell,a,a')\in \R_+^4\\ a\leq L, a'\leq \ell} }\left\{ \left(\vert f(L,\ell,a)\vert + \vert f(\ell,L,a') \vert \right)  {e^{-(L+\ell)/5}}\right\}.$$
The main result on the asymptotic profile of the empirical measure can be stated as follows
\begin{theorem} \label{Thprinc} For any $f\in \mathfrak F$,
$$\frac{\langle Z_t, f \rangle}{t^2} \stackrel{t\rightarrow\infty}{\longrightarrow}\frac{\Pi(f)}{2 \Pi(h)}$$
in $L^2$.
\end{theorem}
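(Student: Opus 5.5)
The plan is to show that the mean of $\langle Z_t,f\rangle/t^2$ converges to the announced limit and that its variance is $O(1/t)$. We decompose $\langle Z_t,f\rangle$ along the double immigration structure of Section \ref{Firstmomentsg} and condition on the Poisson skeleton. Since $f\in\mathfrak F$ is supported on $\R_+^3$, only finite rectangles contribute. Let $(T_i)_{i\ge 1}$ be the Poisson branching times of the doubly infinite rectangle $z_\infty$. The $i$-th one-sided infinite rectangle $(\infty,\ell_i,0)$ has $\ell_i=T_i-T_{i-1}$. It in turn emits finite rectangles $(\ell_i,\Delta_{ij},0)$ at times $T_{ij}$, which form a Poisson process of rate $1$ after $T_i$. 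This gives
$$\langle Z_t,f\rangle=\sum_{i,j:\,T_{ij}\le t} Y_{ij}(t),$$
where $Y_{ij}(t)$ is the contribution at time $t$ of the finite family rooted at $(\ell_i,\Delta_{ij},0)$. Let $\mathcal G$ be the $\sigma$-field generated by all $(T_i,T_{ij})$. By the branching property, the $Y_{ij}$ are independent given $\mathcal G$, and $\E(Y_{ij}\mid\mathcal G)=M_{t-T_{ij}}f(\ell_i,\Delta_{ij},0)$. Hence
$$\mathrm{Var}\langle Z_t,f\rangle=\E\big[\textstyle\sum_{i,j}\mathrm{Var}(Y_{ij}\mid\mathcal G)\big]+\mathrm{Var}\big(\textstyle\sum_{i,j}M_{t-T_{ij}}f(\ell_i,\Delta_{ij},0)\big).$$

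For the mean, Proposition \ref{propinftyinfty} gives $\E\langle Z_t,f\rangle=t^2\Pi(f)+O(\parallel f\parallel t)$, and $\parallel f\parallel\le\vertiii f$. It remains to match the constant with $\Pi(f)/(2\Pi(h))$. This is a normalization check: we compute $\Pi(h)=\int L\ell e^{-L-\ell}\,dL\,d\ell$ from the formula for $\Pi$ and reconcile it with the mass/area conservation $M_th=h$. I would do this carefully rather than assume it, since the factor $2\Pi(h)$ must be consistent with the $t^2\Pi(f)$ asymptotics.

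The second (outer) variance term is handled in two layers. Conditionally on $(T_i)$, the inner sum for fixed $i$ is a Poisson functional in the $T_{ij}$. Its variance is $\int_0^{t-T_i}\E[M_{v}f(\ell_i,\Delta,0)^2]\,dv$, writing $v=t-T_{ij}$. By Proposition \ref{job2} $i)$ this is at most $C\vertiii f^2(1+\ell_i)^6 t$. The exponential weight $e^{-(L+\ell)/5}$ in $\vertiii\cdot$ still leaves room after squaring against the $e^{-\Delta}$ and $e^{-\ell_i}$ densities. Summing over the $O(t)$ indices $i$ gives $O(t^2)$. The outer layer comes from the conditional means $\approx (t-T_i)\Pi_{\ell_i}(f)$ given by Proposition \ref{propinftyl}, whose errors are uniformly controlled by $(1+\ell_i)^{4\vee3\alpha}$. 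This is again a Poisson functional, with variance $\int_0^t (t-s)^2\E[\Pi_\ell(f)^2]\,ds=O(t^3)$.

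The first term needs a second-moment bound for a finite family:
$$\E_{(L,\ell,0)}\big[\langle Z_s,|f|\rangle^2\big]\le C\vertiii f^2 (1+L+\ell)^k e^{2(L+\ell)/5}$$
for some fixed $k$, uniformly in $s$. Summed against the $O(t^2)$ expected finite immigrants with exponential size densities, this gives $O(t^2)$. Altogether the variance is $O(t^3)$, so $\E\big|\langle Z_t,f\rangle/t^2-\text{limit}\big|^2=O(1/t)$.

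The main obstacle is this uniform-in-time second-moment bound. I would use the many-to-two formula
$$\E_z\langle Z_s,g\rangle^2=M_s(g^2)(z)+2\int_0^s M_r\Big[1_{a<L}\,M_{s-r}g(\tau^\rightarrow\cdot)\,M_{s-r}g(\tau^\curvearrowright\cdot)\Big](z)\,dr.$$
I would bound the inner factors by Proposition \ref{job2} $i)$ as $C(1+L+\ell)^3$, using that rectangles only shrink. That reduces matters to bounding $\int_0^\infty M_r(1_{a<L})(z)\,dr$, the expected total number of branching events in the family. This equals the expected number of frozen descendants minus one, hence is $O((1+L+\ell)^3)$ by Proposition \ref{job} and \eqref{massez}.
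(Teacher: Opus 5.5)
Your route differs from the paper's, and its variance half is sound. The mean step, however, does not close as written: taken literally it gives the wrong constant.

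\textbf{Comparison.} You combine an exact mean with a variance bound of order $t^3$. The variance comes from conditioning on the Poisson skeleton of infinite rectangles, the law of total variance, and a uniform-in-time second moment for finite families via many-to-two.

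The paper proceeds differently. It applies a many-to-two expansion to the whole process, split according to whether the most recent common ancestor is finite, one-sided infinite or doubly infinite, and only for centered $f$ with $\Pi(f)=0$. It then identifies the limit by self-normalization: it applies this to $g-\Pi(g)/\Pi(1)$ and to $g=h$, and uses Lemma \ref{idtmart} (finite rectangles fill the triangle of area $t^2/2$ up to an $O(t)$ boundary layer). That is where the denominator $2\Pi(h)$ comes from.

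So the paper never needs the leading constant of $\E\langle Z_t,f\rangle$. Your route does need it. In exchange, it separates cleanly the $t^3$ fluctuations of the immigration skeleton from the $O(t^2)$ internal fluctuations of finite families.

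\textbf{The gap: the leading constant.} Since each $\pi_z$ is a probability, $\Pi(h)=\int L\ell e^{-L-\ell}\,dL\,d\ell=1$, so the target limit is $\Pi(f)/2$.

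If you quote Proposition \ref{propinftyinfty} literally, $\E\langle Z_t,f\rangle=t^2\Pi(f)+O(t)$. Combined with your variance bound, this would prove convergence to $\Pi(f)$, which is false. The test case $f=h$ shows it: the total area of finite rectangles never exceeds $t^2/2$.

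The statement of that proposition loses a factor $1/2$. From \eqref{intemps} and Proposition \ref{propinftyl} you get $G(v)=v\Pi(f)+O(1)$, hence
\[
M_tf(\infty,\infty,0)=\int_0^t (t-s)\Pi(f)\,ds+O(t)=\tfrac{t^2}{2}\Pi(f)+O(t).
\]
You must carry out this computation yourself rather than cite the proposition. Once you do, the mean of $\langle Z_t,f\rangle/t^2$ is $\Pi(f)/(2\Pi(h))+O(1/t)$ and your argument goes through.

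Note that the right reconciliation is not $M_th=h$, which is meaningless at $z_\infty$. It is the triangle bound $t^2/2-O(t)\le \E\langle Z_t,h\rangle\le t^2/2$.

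\textbf{A smaller correction: the variance formula.} The inner sums $\sum_j M_{t-T_{ij}}f(\ell_i,\Delta_{ij},0)$ and the outer sum $\sum_i M_{t-T_i}f(\infty,\ell_i,0)$ are not Poisson functionals with independent marks. The marks $\Delta_{ij}=T_{ij}-T_{i,j-1}$ and $\ell_i=T_i-T_{i-1}$ are backward gaps of the same Poisson process, so the Campbell variance formula you write is not an identity.

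The second factorial moment measure of this gap-marked process differs from the product of first moments only on pairs whose gaps are consecutive (a singular positive part) or overlapping (zero density, a negative correction). Both corrections have the same order as the main term. So your bounds, $O(t)$ for each inner sum and $O(t^3)$ for the outer layer, survive. This still has to be stated and checked, or replaced by a renewal or many-to-two computation on the skeleton, which is what the paper's ancestor decomposition does implicitly.
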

In the proof below, we also evaluate the speed of convergence, which is of order $1/t$. 
We start by proving $L^2$  estimates for "centered" test functions.
\begin{lemme} For any
    $f\in \mathfrak F$ such that $\Pi(f)=0$,
$$\E_{\delta_{(\infty,\infty, 0)}}\left(\left( \frac{\langle Z_t, f \rangle}{t^2}\right)^2\right)=\mathcal O\left(\frac{1}{t}\right).$$
\end{lemme}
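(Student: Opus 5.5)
We will prove the bound by computing the second moment through the genealogy, using the double immigration structure. Write $Z_t$ started from $z_\infty=(\infty,\infty,0)$. The doubly infinite rectangle emits, at the Poisson times $(T_i)_{i\ge1}$, the one-sided infinite rectangles $(\infty,\Delta_i,0)$ with $\Delta_i=T_i-T_{i-1}$. Each of these in turn emits, at its own Poisson times $(T^i_j)_j$, finite rectangles $(\Delta_i,\Delta^i_j,0)$. These then evolve as independent finite branching processes. Accordingly we decompose
$$\langle Z_t,f\rangle=\sum_{i}\sum_{j} X^{i,j}_t,$$
where $X^{i,j}_t$ is the contribution of the finite subtree rooted at the $j$-th emission of the $i$-th infinite branch. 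Conditionally on the Poisson skeleton (all the $T_i,T^i_j$), the $X^{i,j}$ are independent, with conditional means $M_{t-T^i_j}f(\Delta_i,\Delta^i_j,0)$. We then split
$$\E\big(\langle Z_t,f\rangle^2\big)=\E\big(\mathrm{Var}(\langle Z_t,f\rangle\mid \text{skeleton})\big)+\E\Big(\Big(\sum_{i,j}M_{t-T^i_j}f(\Delta_i,\Delta^i_j,0)\Big)^2\Big)$$
and aim to show that each term is $O(t^3)$, which gives the claim after dividing by $t^4$.

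\textbf{Conditional variance term.} The conditional variance is at most $\sum_{i,j}\E(X^{i,j}_t)^2$, so we need a second moment bound for a single finite rectangle: $\E_{\delta_z}(\langle Z_s,f\rangle^2)\le C\,\vertiii{f}^2 e^{c(L+\ell)}(1+L+\ell)^{k}$ uniformly in $s$. We would get this from the many-to-two formula: the second moment equals $M_s(f^2)$ plus the integral over branching times $r$ of $M_r$ applied to $2\,M_{s-r}f(\tau^\rightarrow)\,M_{s-r}f(\tau^\curvearrowright)$ at branching rate $1_{a<L}$. Rectangles only shrink, and $M_{s-r}|f|$ is bounded by Proposition~\ref{job2} $i)$ times $e^{(L+\ell)/5}$. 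The expected total number of branchings is bounded by $M$ applied to the number of rectangles, hence polynomial in $L,\ell$. Summing over the $O(t^2)$ expected emissions, with the exponential integrability of $\Delta_i,\Delta^i_j$ (since $1/5<1$), gives $O(t^2)$.

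\textbf{Squared conditional mean.} This term carries the real content. By Proposition~\ref{job2} $ii)$, with error terms integrable against the exponential laws as in Lemma~\ref{estilem},
$$\sum_{i,j}M_{t-T^i_j}f(\Delta_i,\Delta^i_j,0)=\sum_{i,j}\Delta_i\Delta^i_j\,\pi_{(\Delta_i,\Delta^i_j,0)}(f/h)+O\big(\#\{\text{emissions}\}\big)\quad\text{in }L^2,$$
up to an error of size $O(t^{3/2})$ in $L^2$, coming from the terms with $t-T^i_j$ small. Set $\phi(L,\ell)=L\ell\,\pi_{(L,\ell,0)}(f/h)$; the centering $\Pi(f)=0$ says exactly $\E(\phi(\Delta,\Delta'))=0$ for independent standard exponentials. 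The main sum then has the form $S_t=\sum_{i}\sum_{j:T^i_j\le t} \phi(\Delta_i,\Delta^i_j)$, a Poisson-in-Poisson sum. We compute $\E(S_t^2)$ by splitting pairs by ancestry:
\begin{itemize}
\item Different $i$: conditionally on the $\Delta$'s the contributions are independent, and each inner sum has conditional mean $(t-T_i)\,\bar\phi(\Delta_i)$ with $\bar\phi(x)=\E\phi(x,\Delta')$. These pairs therefore produce $\E\big[\big(\sum_i (t-T_i)\bar\phi(\Delta_i)\big)^2\big]$. Since $\E\bar\phi(\Delta)=0$ this is a centered-type sum of size $O(t^3)$, with one caveat: $\Delta_i$ and $T_i$ are correlated, because $T_i=\sum_{k\le i}\Delta_k$. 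We would handle this by replacing $(t-T_i)$ with $(t-T_{i-1})$, paying an error $\Delta_i|\bar\phi(\Delta_i)|$ summed over $O(t)$ terms, which is $O(t^2)$ in $L^2$. After that replacement the summands form a martingale difference sequence, and their squared norms sum to $O(t^3)$.
\item Same $i$: there are $O(t)$ infinite branches, each carrying $O(t^2)$ pairs of bounded second moment, for a total of $O(t^3)$.
\end{itemize}

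We expect the main obstacle to be this second term: the boundary corrections in the $L^2$ sense, and the correlation between emitted widths and emission times. The martingale reorganization along the Poisson skeleton is what we would try first.

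Polynomial prefactors in $\Delta$ cause no trouble, since $\vertiii{\cdot}$ only allows growth $e^{(L+\ell)/5}$, which is squared-integrable against $e^{-L-\ell}$. Combining the two estimates gives $\E(\langle Z_t,f\rangle^2)=O(t^3)$, hence the bound $O(1/t)$ claimed in the lemma.
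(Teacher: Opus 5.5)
Your argument is correct in substance, but it is organized differently from the paper's.

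The paper expands $\E(\langle Z_t,f\rangle^2)$ with a many-to-two formula. It sorts pairs $u\neq v$ by whether $u\wedge v$ is finite, has one infinite side, or is doubly infinite, and bounds each class using first-moment estimates alone. The hypothesis $\Pi(f)=0$ enters in a single line: the centered bound $|M_sf(\infty,\infty,0)|\le C\parallel f\parallel s$ of Proposition~\ref{propinftyinfty}, applied to the straight offspring of the doubly infinite rectangle.

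You instead condition on the two-level Poisson immigration skeleton and use the law of total variance. This needs an ingredient the paper never uses: a uniform-in-time second-moment bound for a finite subtree. You get it from many-to-two plus the observation that the expected number of branchings is $M_s1-1$. In exchange, you need only Proposition~\ref{job2} rather than Propositions~\ref{propinftyl} and~\ref{propinftyinfty}, and you recover the cancellation by hand as a martingale-difference sum over the one-sided branches.

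The mechanism is the same in both: everything immigrating after a given one-sided branch has centered mean. Your version is more informative. It shows that the within-subtree noise is only $O(t^2)$, whereas the paper bounds the finite-MRCA term crudely by $O(t^3)$. So the $t^3$ comes entirely from fluctuations of the skeleton, which also suggests the $1/t$ rate is sharp whenever $\bar\phi\not\equiv 0$. The paper's version is much shorter because the centering was already paid for in Proposition~\ref{propinftyinfty}.

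Two spots need tightening.

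\textbf{The approximation error.} The displayed error $O(\#\{\text{emissions}\})$ is too crude: it is $O(t^2)$ in $L^2$ and would ruin the estimate. What Proposition~\ref{job2}~$ii)$ with $\alpha>1$ actually gives is that each one-sided branch contributes $\sum_j (1+t-T^i_j)^{-\alpha}=O(1)$. Hence the total error is $O(t)$ in $L^2$. To see this, absorb the dependence between $\Delta^i_j$ and $T^i_j$ using $(1+v-\delta)^{-\alpha}\le(1+\delta)^{\alpha}(1+v)^{-\alpha}$ for $0\le\delta\le v$.

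\textbf{The inner conditional mean.} The conditional mean of the inner sum over $j$ equals $(t-T_i)\bar\phi(\Delta_i)$ only up to a correction bounded by $\E[(1+\Delta')|\phi(\Delta_i,\Delta')|]$. This is harmless, since after summing over $i$ it contributes $O(t)$ in $L^2$.
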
 

\begin{proof} We start from the initial doubly infinite rectangle $z_{\infty}=(\infty,\infty,0)$
and expand second moment  using branching property. The branching 
event  involves the most recent common ancestor $u\wedge v$ of two rectangles $u,v\in \mathcal V(t)= V(t) \cup W(t)$ alive (active or frozen) at time $t$. The number of alive rectangle is of order $t^2$ and the couples of order $t^4$.
As we will see below and as one may guess from the spatial geometry approximated by a triangle, the  most recent common ancestor  of most couples $(u,v)$ at time $t$ is a fragment   $u\wedge v$ doubly infinite. It means   that most of  $u\ \wedge v\in \mathbb N$  are rectangle of the form $(\infty,\infty,a)$ at branching. 
Most recent common ancestor which have one single infinite size, i.e. rectangles $u\wedge v\in \mathbb N^2$ of the form $(\infty, l,a)$, will be negligible and are expected to be of order of magnitude $t^3$, see below the computations. The most recent common ancestor of other couples $u\wedge v$ are finite rectangles.
The number of such couples will also be negligible is also expected to be  of order $t^2$. Skipping the initial value $\delta_{z_\infty}$ in Notation, we get
\begin{align*}
&\E(\langle Z_t, f \rangle)^2)\\
&\quad = \E\left(\sum_{u\in \mathcal V(t)} f(Z_u(t))^2\right) + \E\left(\sum_{\substack{u,v \in \mathcal V(t), u\neq v,\\
u\wedge v\not\in \mathbb N \cup \mathbb N^2}}f(Z_u(t)) f(Z_v(t))\right)\\
& \qquad+ \E\left(\sum_{\substack{u,v \in \mathcal V(t), u\neq v,\\
 u\wedge v\in \mathbb N^2 }}f(Z_u(t)) f(Z_v(t))\right)  + \E\left(\sum_{\substack{u,v \in \mathcal V(t), u\neq v, \\ u\wedge v\in \mathbb N}}f(Z_u(t)) f(Z_v(t))\right).
\end{align*}
The first term grows at most as $t^2$ by Proposition \ref{propinftyinfty}:
$$\E\left(\sum_{u\in \mathcal V(t)} f(Z_u(t))^2\right)=M_tf^2(z_\infty) \leq C\parallel f^2\parallel (1+t)^2.$$
The second term corresponds to couples of rectangles coming from one finite rectangle. More precisely, we write  $M_s(z_{\infty},dz)$  the kernel measure  associated with the finite moment semigroup  and defined by  $M_s(z_{\infty},A)=M_s1_A(z_\infty)$, for $A$ Borel set of $\R_+^3$. Since only the active rectangle branch,   we get
\begin{align*}
\E\left(\sum_{\substack{u,v \in \mathcal V(t), u\neq v,\\
u\wedge v\not\in \mathbb N \cup \mathbb N^2}}f(Z_u(t)) f(Z_v(t))\right) \leq \int_0^t ds \int_{E} M_s(z_\infty, dz)  M_{t-s}f(\tau^\rightarrow(z))M_{t-s}f (\tau^\curvearrowright(z)),
\end{align*}
where $E=\{(L,\ell,a) \in \R_+^3 : a \leq L\}$ and we recall that $\tau^\rightarrow(L,\ell,a)= (L-a, \ell, 0) \text{ and } \tau^\curvearrowright(L,\ell ,a)= (\ell, a, 0)$. Then using Propositions \ref{job2} $i)$,
$$\max(M_{t-s}f(\tau^\rightarrow(z)), M_{t-s}f(\tau^\curvearrowright(z))) \, \leq \, C(1+L+\ell)^3  e^{(L+\ell)/5}\vertiii f$$
for $f \in  \mathfrak F$.
Writing $\kappa(L,a,\ell)=(1+L+\ell)^6 e^{2(L+\ell)/5}$, $\parallel \kappa\parallel<\infty$  and using that the first part of Proposition \ref{propinftyinfty},  we get $M_s(z_{\infty}, \kappa)\leq  C \parallel \kappa\parallel (1+s)^2$
and
\begin{align*}
\E\left(\sum_{\substack{u,v \in \mathcal V(t), u\neq v,\\
u\wedge v\not\in \mathbb N \cup \mathbb N^2}}f(Z_u(t)) f(Z_v(t))\right)&  \leq C^2\vertiii f ^2\int_0^t ds M_s(z_{\infty}, \kappa)\\
& \leq C^3 \vertiii f^2 \parallel \kappa\parallel (1+t)^3.
\end{align*}
The third term corresponds
to  couples of rectangles coming from a rectangle with one single infinite side. We obtain similarly
\begin{align*}
&\E\left(\sum_{\substack{u,v \in \mathcal V(t), u\neq v,\\
 u\wedge v\in \mathbb N^2 }}f(Z_u(t)) f(Z_v(t))\right) \\
 &\qquad =\int_0^t ds \int_{\R^2} M_s(z_{\infty}, \{\infty \} d\ell da)  M_{t-s}f(\infty,\ell,0)M_{t-s}f (\ell,a,0).
\end{align*}
We use again Proposition \ref{job2} $i)$ for $M_{t-s}f (\ell,a,0)$ and the first part of Proposition \ref{propinftyl} for  $M_{t-s}f(\infty,\ell,0)$. Thus, writing  $\zeta(L,\ell,a)= 1_{L=\infty, \ell<\infty}(1+\ell+a)^3\exp((\ell+a)/5)(1+\ell)^3\exp(3 \ell/4),$ we get 
$$1_{L=\infty, \ell<\infty}M_{t-s}f(\infty,\ell,0)M_{t-s}f (\ell,a,0)\leq C^2\vertiii  f \parallel f \parallel \zeta(L,\ell,a)(1+t-s).$$
Adding that  $M_s(z_{\infty},\zeta)$ grows (at most) linearly with $s$ from $\eqref{cagrowlin}$, we obtain 
\begin{align*}
\E\left(\sum_{\substack{u,v \in \mathcal V(t), u\neq v,\\
 u\wedge v\in \mathbb N^2 }}f(Z_u(t)) f(Z_v(t))\right)
 & \leq C^2 \vertiii  f \parallel f \parallel \int_0^t M_s(z_{\infty},\zeta)(1+t-s) ds\\
 &\leq C' \vertiii  f \parallel f \parallel (1+t)^3.
 \end{align*}
Let us turn to the last term, which is providing the main contribution.
\begin{align*}
&\E\left(\sum_{\substack{u,v \in \mathcal V(t), u\neq v, \\ u\wedge v\in \mathbb N}}f(Z_u(t)) f(Z_v(t))\right)\\
&\qquad =\sum_{i\geq 1}\E\left(1_{T_i\leq t}M_{t-T_i}f(\infty,\infty,0)M_{t-T_i}f(\infty,T_i-T_{i-1},0)\right)\\
&\qquad =\sum_{i\geq 1}\E\left(F(t-T_{i-1})\right)=F(t)+\int_0^t F(t-s)ds ,
\end{align*}
where, writing $\Delta=_{law} T_{i}-T_{i-1}$ an exponential r.v. with parameter $1$
$$F(u)=\E(1_{\Delta\leq u} M_{u-\Delta}f(\infty,\infty,0) M_{u-\Delta}f(\infty,\Delta,0)).$$
For $f$ non-negative, $F(u)$ is expected to be of order $u^2.u=u^3$ and integrating in time the last term is of order $t^4$. To prove $L^2$ convergence to the stationary distribution given by $\Pi$, we consider now $f$ such that 
$$\Pi(f)=0$$
and obtain a lower order of magnitude.
Using the second part of Proposition \ref{propinftyinfty}
for $M_{u-\Delta}f(\infty,\infty,0)$
and the first part of Proposition \ref{propinftyl} for $M_{u-\Delta}f(\infty,\Delta,0)$, we   get
\begin{align*}
 \vert F(u)\vert &\leq \parallel f\parallel \E(1_{\Delta\leq u} (1+u-\Delta)^2 (1+\Delta)^3 \parallel f\parallel_{\Delta,\infty}) \\ 
 & \leq  \parallel f\parallel^2 (1+u)^2 \E( (1+\Delta)^3e^{3\Delta/4}) .
\end{align*}
Adding that $\E( (1+\Delta)^3e^{3\Delta/4})<\infty$  and integrating $(1+u)^2=(1+t-s)^2$ for $s\in [0,t]$, we get  
\begin{align*}
&\E_{z_0}\left(\sum_{\substack{u,v \in \mathcal V(t), u\neq v, \\ u\wedge v\in \mathbb N}}f(Z_u(t)) f(Z_v(t))\right)\leq  C \parallel f\parallel^2(1+t)^3.
\end{align*}
Gathering these results 
proves the result.
\end{proof}
We consider now $g\in \mathfrak F$ 
and $f=g-\Pi(g)/\Pi(1)$, then $f\in \mathfrak F$, $\vertiii f$ and  $\parallel f\parallel$ are finite and  $\Pi(f)=0$.
Applying the previous lemma to $f$,   we obtain
$$\E_{z_0}\left(\left( \frac{\langle Z_t, g \rangle- \langle Z_t, 1 \rangle \Pi(g)/\Pi(1)}{t^2}\right)^2\right)=\mathcal O\left(\frac{1}{t}\right).$$
Thus,
\begin{equation}
\label{limgl2}
\frac{\langle Z_t, g \rangle}{t^2}-\frac{\langle Z_t, 1 \rangle}{t^2}\frac{\Pi(g)}{\Pi(1)} \stackrel{t\rightarrow \infty}{\longrightarrow }   0 \qquad \text{in } L^2 
\end{equation}
Thus holds in particular for $g=h\in \mathfrak F$ the surface function, which will allow us to identify the convergence of $\langle Z_t, 1 \rangle/t^2$ through the convergence of
$\langle Z_t, h \rangle/t^2$ and conclude. 
Let us thus consider the total surface of finite rectangles,  which is easy to describe thanks to the conservation of surface at branching events. 
\begin{lemme}
\label{idtmart}
Starting from $Z_0=\delta_{(\infty,\infty, 0)}$,    the following limit holds
   $$\frac{\langle Z_t, h \rangle}{t^2} =\frac{1}{t^2}\sum_{u\in V(t) \cup W(t) } L_u\ell_u1_{L_u\ell_u<\infty}\stackrel{t\rightarrow\infty}{\longrightarrow} \frac{1}{2}$$
in $L^2$ and the difference goes to $0$ in $L^2$ at least at speeed $1/t$ as $t$ goes to infinity.
\end{lemme}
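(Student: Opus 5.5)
The plan is to use the conservation of surface at branching events to turn $\langle Z_t,h\rangle$ into an explicit functional of Poisson processes, rather than going through the first-moment estimates.

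\textbf{Step 1: reduce to strips.} Starting from $z_\infty=(\infty,\infty,0)$, the doubly infinite rectangle branches at the Poisson times $(T_i)_{i\geq 1}$. At time $T_i$ it creates the strip $(\infty,\Delta_i,0)$ with $\Delta_i=T_i-T_{i-1}$. By the branching property, each strip in turn creates finite rectangles $(\Delta_i,\cdot,0)$ at the times of its own Poisson process $(T^{(i)}_j)_{j\geq1}$ on its age axis. These processes are independent of each other and of $(T_i)$.

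Since fragmentation of a finite rectangle preserves the total surface (and freezing does not change it), every finite rectangle contributes its whole surface to $\langle Z_t,h\rangle$ through its descendants. Hence the finite descendants of strip $i$ at time $t$ have total surface $\Delta_i$ times the sum of the lengths of the finite rectangles it has produced. That sum is the last branching point of strip $i$ before its current age $t-T_i$. This is also the content of the covering Proposition of Section~\ref{ConnectionMod}: the finite $\Omega_u$ fill the quadrant minus the infinite ones. We therefore get the exact identity
\begin{equation*}
\langle Z_t,h\rangle=\sum_{i\,:\,T_i\leq t}\Delta_i\,\big(t-T_i-R_i(t)\big),
\end{equation*}
where $R_i(t)=(t-T_i)-\sup\{T^{(i)}_j : T^{(i)}_j\leq t-T_i\}$, with the convention $\sup\emptyset=0$. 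Conditionally on $(T_i)$, $R_i(t)$ is the backward recurrence time of a rate-one Poisson process, so $R_i(t)\leq \min(E_i,t-T_i)$ with $E_i$ i.i.d. exponential variables independent of $(T_i)$.

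\textbf{Step 2: compare with $t^2/2$.} Let $N_t=\#\{i: T_i\leq t\}$. For $s\in(T_{i-1},T_i]$ we have $t-s-\Delta_i\leq t-T_i\leq t-s$. Therefore
\begin{equation*}
\Big|\sum_{i\leq N_t}\Delta_i(t-T_i)-\int_0^{T_{N_t}}(t-s)\,ds\Big|\leq \sum_{i\leq N_t}\Delta_i^2 .
\end{equation*}
In addition, $\big|\int_{T_{N_t}}^t(t-s)\,ds\big|\leq (t-T_{N_t})^2$, where $t-T_{N_t}$ is stochastically dominated by an exponential variable. The remaining error term is controlled by
\begin{equation*}
0\leq \sum_{i\leq N_t}\Delta_i R_i(t)\leq \sum_{i\leq N_t}\Delta_i E_i .
\end{equation*}

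\textbf{Step 3: $L^2$ bounds.} It remains to show that $\sum_{i\leq N_t}\Delta_i^2$, $(t-T_{N_t})^2$ and $\sum_{i\leq N_t}\Delta_iE_i$ all have $L^2$ norm $O(t)$. Bounding $N_t\leq N'_t:=\inf\{n:T_n>t\}$ turns each sum into a sum of i.i.d. nonnegative variables with all moments finite, stopped at the stopping time $N'_t$. Wald's identities, together with $\E(N_t'^2)=O(t^2)$, then give second moments $O(t^2)$. Dividing by $t^2$ yields
\begin{equation*}
\E\Big(\big(\langle Z_t,h\rangle/t^2-1/2\big)^2\Big)=O(t^{-2}),
\end{equation*}
so the $L^2$ distance is $O(1/t)$, as claimed.

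\textbf{Main obstacle.} I expect the most delicate step to be the rigorous justification of the exact surface identity in Step 1. One must check that a strip's finite descendants tile exactly $\Delta_i$ times the last branching point of its tip, accounting for frozen rectangles and for rectangles that die by fusion before branching. I would argue this by induction on branching events, using that $\tau^\rightarrow$ and $\tau^\curvearrowright$ preserve $L\ell$: a rectangle $(L,\ell,a)$ splits into pieces of surfaces $(L-a)\ell$ and $\ell a$, which sum to $L\ell$. Alternatively, I would deduce it directly from the covering Proposition.
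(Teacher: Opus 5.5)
Your proof is correct and follows essentially the paper's route. The paper also uses surface conservation: the finite rectangles fill the triangle $\{x+|y|\le t\}$ up to a deficit carried by the infinite rectangles, which it bounds by a Poisson sum of exponential-size pieces before concluding with an $L^2$ law of large numbers. Your exact strip identity $t^2/2-\langle Z_t,h\rangle=\sum_{i\le N_t}\big(\Delta_i^2/2+\Delta_iR_i(t)\big)+(t-T_{N_t})^2/2$ simply makes that domination step explicit, and is more precise than the paper's loose claim that the deficit is a sum of products of two independent exponentials.
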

\begin{proof} The original fragment grows at speed one from the origin and the surface 
$\langle Z_t, h \rangle$ in included in the rectangle triangle  with size $t$, whose surface is $t^2/2$. We can even control the missing part, it corresponds to the infinite rectangles up to time $t$. The missing part at time $t$ is thus dominated by independent rectangles whose size is exponentially distributed  and
$$0\leq t^2/2-\langle Z_t, h \rangle \leq \Delta_t \quad \text{p.s.}, \qquad \Delta_t\stackrel{law}{=} \sum_{i=1}^{N_t} R_i$$
where $N$ is a Poisson Process with parameter $1$ and $R_i$ are i.i.d. r.v. distributed as the product of two exponential independent r.v. Adding that 
$\sum_{i=1}^{N_t} R_i/t^2$ goes to $0$ in $L^2$ as $1/t$, by  a classical direct computation ($L^2$ law of large numbers),  ends the proof.
\end{proof}
Putting the pieces, we can   prove  Theorem
\ref{Thprinc}.
\begin{proof}[Proof of Theorem \ref{Thprinc}]
Applying \eqref{limgl2} to $g=h$ shows that
$$\lim_{t\rightarrow\infty} \frac{\langle Z_t, h \rangle}{t^2}-\frac{\langle Z_t, 1 \rangle}{t^2}\frac{\Pi(h)}{\Pi(1)}=0=\frac{1}{2}- \lim_{t\rightarrow\infty} \frac{\langle Z_t, 1 \rangle}{t^2}\frac{\Pi(h)}{\Pi(1)}$$
in $L^2$ by using Lemma \ref{idtmart}. Then 
$ \eqref{limgl2}$ becomes
$$\frac{\langle Z_t, g \rangle}{t^2}-\frac{\Pi(1)}{2\Pi(h)}\frac{\Pi(g)}{\Pi(1)} \stackrel{t\rightarrow \infty}{\longrightarrow }   0 \qquad \text{in } L^2, $$
which ends the proof.
\end{proof}

\section{Discussion}
\label{sec:discussion}
Let us discuss some perspectives and stimulating questions. First, on the mathematical side of the study of the model, we expect to be able to describe more finely the genealogy and the long time behavior. In particular, it would be interesting to study precisely the genealogy of inactive rectangles and the genealogy of active interior rectangles and the genealogy at the boundary. Besides, while our  $L^2$ estimates are expected to be sharp and allow to quantify the speed of convergence, the a.s. behavior is left open. The speed of convergence $1/t$  seems to prevent a direct proof using Borel Cantelli type argument, but the freeze of rectangles and martingales should help, at least for the a.s. behavior of the full empirical measure.\\

The model we consider in this manuscript is a simplification of the model introduced in \cite{Hannezo} and described above: we assume that the active tips progress on straight lines, that branching at the tip has a particular structure: of the two active tips that appear, one continues on the initial straight line, while the other one follows the rotation of this line at an angle $\pi/2$. This particular rule allowed us to show that the 2D surface $[0,\infty)^2$ undergoes a particular fracturation process. More precisely, we were able to exhibit a branching process on rectangles with an age describing this fracturation, and to analyse the long time dynamics of that process. A possible generalization of the model is to assume that during branching events, one tip continues along the straight line (as it is the case at the moment, and that the other tip emerges on the right-hand-side, but at an angle that is not necessarily $\frac \pi 2$: it could for instance follow a law supported in $(0,\pi)$. We believe it could be possible to describe the extend the approach we introduced into a branching process on polygons with age. The most interesting generalization would however to allow branching on both sides of a vessel. It is unfortunately unclear that it would then be possible to describe the fracturation through independent units of surface, and the connection with a branching process would then break down. New ideas will be necessary to investigate such processes, but we hope some of the properties we exhibited in this manuscript will hold in a more general context (for instance the specific role played by some vessels that are the first to arrive in specific parts of the plane). 

\section{Appendix}

\subsection{The PDE associated to the first moment of the empirical measure}
\label{sec:APP1}
Recalling \eqref{eq:semi-group}, considering bounded measurable function  $f$ which does noes not depend on the last variable $u\in \mathcal U$ and is $C^1$ with respect to the third coordinate (the age $a$),  $f :  (L,\ell,a) \in \mathcal X \rightarrow   f(L,\ell,a) \in \mathbb R_+$, 
\begin{align*}
&\langle {Z}_t,f \rangle= \langle {Z}_0,f \rangle + \int_0^t  \int_{\mathcal X}  \frac{\partial f}{\partial a} ( L,\ell,a) 1_{a<L} {Z}_s (dL,d\ell,da) \,ds + M_t^f   \label{eq:semi-group}\\
&\, \qquad \qquad  +\int_0^t  \int_{\mathcal X} 1_{a<L} \Big(f(\tau^\rightarrow(L,\ell,a))
+f(\tau^\curvearrowright(L,\ell,a))  -f(L,\ell,a)\Big) {Z}_s (dL,d\ell,da) \, ds.\nonumber 
\end{align*}
Considering the measure $\gamma_t$ on $\mathcal X$ defined by
$$\E_{\delta_{(\infty,\infty,0)}}(\langle  Z_t,f \rangle) =\int_{\mathcal X} f(L,\ell,a)\,\gamma_{t}(dL,d\ell,da)=M_tf(\infty,\infty,0)$$
and using that $M^f$ is a martingale starting from $0$, 
we obtain by taking expectation that
\begin{align*}
&\gamma_t(f)= f(\infty,\infty,0)  + \int_0^t  \int_{\mathcal X}  \frac{\partial f}{\partial a} ( L,\ell,a) 1_{a<L} \gamma_s (dL,d\ell,da) \,ds \\
&\, \qquad \qquad  +\int_0^t  \int_{\mathcal X} 1_{a<L} \Big(f(\tau^\rightarrow(L,\ell,a))
+f(\tau^\curvearrowright(L,\ell,a))  -f(L,\ell,a)\Big) \gamma_s (dL,d\ell,da) \, ds.\nonumber 
\end{align*}
Moreover $\gamma_{t}$ admits the following form
\begin{align*}
\gamma_{t}(dL,d\ell,da)&=n_{t,L,\ell}(da)1_{(L,\ell)\in\mathbb R_+^2}1_{a<L}\,dL\,d\ell+m_{t,\ell}(da)1_{\ell\in \mathbb R_+}\, \delta_{\infty}(dL)d\ell\,\\
&\quad  +p_t(da)\delta_{(\infty,\infty)}(dL,d\ell).
\end{align*}
Let us then  consider $f^n\in C^1_c(\mathbb R_+^3)$, $f^m\in C^1_c(\mathbb R_+^2)$, $f^p\in C^1_c(\mathbb R_+)$, and introduce the following test functions \[f(L,\ell,a):=f^n(L,\ell,a)1_{(L,\ell)\in\mathbb R_+^2}+f^m(\ell,a)1_{L=\infty,\ell\in\mathbb R_+}+f^p(a)1_{(L,\ell)=(\infty,\infty)}.\]

Then, thanks to a changes of variable, we obtain:
\begin{equation}
\left\{\begin{array}{l}
\int_{\mathbb R_+^3} f^n(L,\ell,a)\,dn_{t,L,\ell}(a)\,dL\,d\ell =
 \int_0^t \int_{\mathbb R_+^3}  \frac{\partial f^n}{\partial a} (L,\ell,a)1_{a<L}\,n_{s,L,\ell}(da)\,dL\,d\ell\,ds\nonumber\\
\qquad +\int_0^t\Big[\int_{\mathbb R_+^2}f^n(L,\ell,0) \int_{\mathbb R_+}\,n_{s,L+\tilde a,\ell}( d\tilde a)\,dL\,d\ell+\int_{\mathbb R_+^2} \int_{0}^{\tilde L} f^n(L,\ell,0)\,n_{s,\tilde L,L}(d\ell)\,d\tilde L\,dL\\
\qquad\phantom{+\int_0^t\Big[} -\int_{\mathbb R_+^3} f^n(L,\ell,a)1_{a<L}\,dn_{s,L,\ell}(a)\,dL\,d\ell+\int_{\mathbb R_+^2}f^n(L,\ell,0) \,dm_{s,L}(\ell)\,dL\Big]\,ds,\\ \\
\int_{\mathbb R_+^2}  f^m(\ell,a)\,m_{t,\ell}(da)\,d\ell=
\int_0^t \int_{\mathbb R_+^2} \frac{\partial f^m}{\partial a} (\ell,a)\,dm_{s,\ell}(a) \,d\ell\, ds\nonumber\\
\qquad +\int_0^t \Big[\int_{\mathbb R_+^2} f^m(\ell,0) \,m_{s,\ell}(d\tilde a)\,d\ell-\int_{\mathbb R_+^2} f^m(\ell,a)\,m_{s,\ell}(da)\,d\ell+\int_{\mathbb R_+} f^m(\ell,0)\,d p_s(\ell)\Big]\,ds\\ \\
\int_{\mathbb R_+}  f^p(a) \,dp_t(a)=f^p(0)
+ \int_0^t \int_{\mathbb R_+}  \frac{\partial f^p}{\partial a} (a)\,dp_s(a)\, ds\nonumber\\ 
\qquad +\int_0^t \Big[-\int_{\mathbb R_+} f^p(a)\,p_s(da)+f^p(0)\int_{\mathbb R_+} \,p_s(\tilde da)\Big]\,ds,
\end{array}\right.
\end{equation}

If $\phi\in C^2_c(\mathbb R_+^2)$, then we can use $\partial_t \phi$ as a test function for the third equation in the system above,
to show
\begin{align*}
    &-\int_0^\infty \int_{\mathbb R}(\partial_t \phi)(t,a)\,dp_t(a)\,dt=\int_{\mathbb R}\phi(0,a)\,dp_0(a)+\int_{\mathbb R_+\times\mathbb R}  \partial_a\phi(s,a)1_{a<L}\,dp_s(a)\,ds\\
    &\qquad +\int_0^\infty\Big[-\int_{\mathbb R} \phi(s,a)\,dp_s(a)+ \phi(s,0)\int_{\mathbb R} \,dp_s(\tilde a)\Big]\,ds,
\end{align*}
and $p_t$ is therefore a solution in the sense of Definition~4.1 in \cite{santambrogio2015optimal} of the following continuity equation with a measure source term:
\begin{align*}\label{eq:p}
    \left\{\begin{array}{l}
    \partial_t p_t(a)+\partial_a p_t(a)=-p_t(a)+\left(\int_{\mathbb R_+} \,dp_s(\tilde a)\right)\delta_{a=0},\quad (t,a)\in (0,\infty)\times\mathbb R,\\
    p_0(a)=\delta_{a=0},\quad a\in\mathbb R.
    \end{array}\right.
\end{align*}
Note that this equation for $(t,a)\in\mathbb R_+\times\mathbb R$ with a Dirac source term at $a=0$ is equivalent to the transport equation for $p_t$ stated in \eqref{eq:nmp} (which we do not detail here), where $(t,a)\in\mathbb R_+\times\mathbb R_+$ and the source term is replaced by a boundary condition at $a=0$. Then $p_t$ satisfies 
\begin{equation*}
\left\{\begin{array}{l}\frac{\partial p_t}{\partial t}(a)+\frac{\partial p_t}{\partial a}(a)=-p_t(a),\quad a\in(0,\infty), \vspace{0.25cm}\\
p_t(0)=\int_0^\infty \,dp_t(a).
\end{array}\right.
\end{equation*}
Similar arguments can be used for $m_{t,\ell}$ (with $\ell>0$ as a parameter) and $n_{t,L,\ell}$ (with $L,\ell>0$ as parameters). Then, it is possible to see that the satisfy the following system in the sense of Definition~4.1 in \cite{santambrogio2015optimal}:
\begin{equation}\label{eq:nmp}
\left\{\begin{array}{l}
\frac{\partial n_{t,L,\ell}}{\partial t}(a)+1_{a<L}\frac{\partial n_{t,L,\ell}}{\partial a}(a)=-n_{t,L,\ell}(a),\quad (t,a)\in(0,\infty)\times (0,L], \vspace{0.25cm}\\
n_{t,L,\ell}(0)=\int_0^{\infty} \,dn_{t,L,\ell}(a)+\int_{\ell}^{\infty}n_{t,x,\ell}(\ell)\,dx+m_{t,L}(\ell), \quad t\geq 0,\vspace{0.25cm}\\
\frac{\partial m_{t,\ell}}{\partial t}(a)+\frac{\partial m_{t,\ell}}{\partial a}(a)=-m_{t,\ell}(a),\quad (t,a)\in(0,\infty)^2, \vspace{0.25cm}\\
m_{t,\ell}(0)=\int_0^{\infty} \,dm_{t,\ell}(a)+p_t(\ell), \quad t\geq 0, \vspace{0.25cm}\\
\frac{\partial p_t}{\partial t}(a)+\frac{\partial p_t}{\partial a}(a)=-p_t(a),\quad  (t,a)\in(0,\infty)^2\vspace{0.25cm}\\
p_t(0)=\int_0^\infty \,dp_t(a), t\geq 0,
\end{array}\right.
\end{equation}
where these equations hold for any $L,\ell>0$, with these seen as parameters. Moreover, we have the initial condition $n_{0,L,\ell}(da)= 0$, $m_{t,\ell}(da)= 0$ and $p_0(da)=\delta_0(da)$. Notice that $n_{t,L,\ell}(a)$, for $a\in[0,L)$, represents the density of active branches present in the system (corresponding to the set $V(t)$), while $n_{t,L,\ell}(L)$ represents the density of frozen branches present in the system (corresponding to the set $W(t)$).

\medskip

Finally, we can obtain explicit formula for $p_t$ and $m_{t,\ell}$. To show this, we use the weak formulation of the equation satisfied by $p_t$ and a test function equal to $f^p\equiv 1$ on the support of $p_t$ to show 
\begin{align*}
    \int_{\mathbb R_+}  \,dp_t(a)=1
+ \int_0^t \Big[-\int_{\mathbb R_+} \,dp_s(a)+\int_{\mathbb R_+} \,d p_s(\tilde a)\Big]\,ds=1.
\end{align*}
The  characteristic method detailed in \cite{santambrogio2015optimal} can then be used to identify $p_t$: 
\begin{align*}
    p_t&=e^{-t}Y_t{}_{\#} p_0+\int_0^t e^{-(t-s)}Y_{t-s}{}_{\#} \left(\left(\int_{\mathbb R_+} \,dp_s(\tilde a)\right)\delta_{a=0}\right)\,ds
\end{align*}
where $Y_t(a):=a+t$ and $Y_t{}_{\#} p_0$ the push forward of $p_0$ by $Y_t$ (see \cite{villani2008optimal}). Therefore  $p_t(da)=e^{-t}\delta_t(da)+e^{-a}1_{a<t}\,da$.
Next, we can consider the weak formulation satisfied by $m_{t,\ell}$ and  a test function $f^m(\ell,a)f^m(\ell)$ on the support of $m_{t,\ell}$ to show
\[\int_{\mathbb R_+} \,dm_{t,\ell}(a)=\left(1+(t-\ell)\right)1_{\ell\leq t}e^{-\ell}.\]
Here also, the characteristic method can be employed to obtain the following formula:
\[m_{t,\ell}(da)=\left(2+(t-a-\ell)\right)e^{-\ell-a}1_{t-a-\ell\geq 0}\,da+e^{-t}\delta_{t-\ell}(da).\]
For $n_{t,\ell,L}$, we did not succeed to obtain an explicit formula. Notice that for $t,\ell,L>0$, the measure $n_{t,\ell,L}(da)$ over $[0,L]$ can be decomposed into $n_{t,\ell,L}(da)1_{a<L}$, that corresponds to a density of branches alive at time $t$ (see the set $V(t)$ of alive branches defined by \eqref{Vt}), and $n_{t,\ell,L}(da)1_{a=L}$, that corresponds to a density of branches frozen at time $t$, see the set $W(t)$ of alive branches defined by \eqref{Wt}.

\subsection{Numerics}
\label{sec:APP2}

To produce Figure~\ref{fig:simu1} and \ref{fig:simu2} have been produced thanks to a variation of a code written by Ana Fernández Baranda and Nina Gregorio, available on Github, \href{https://github.com/anafdz25/Branching-networks}{https://github.com/anafdz25/Branching-networks}. The code simulate the process described in the introduction, with branching events creating a lateral branch on the right hand side. We can also use these simulations to compute properties of the network created, reproducing the types of graphs that were represented in \cite{Hannezo}, that is an inspiration for this work. We have plotted the corresponding graphs in Figure~\ref{fig:simustatistics}.

\medskip

As described in the discussion, a natural generalization of the model we consider here is to allow branching on both sides. The code above can be modified to include branching on both sides, leading to the simulation represented in Figure~\ref{fig:simuboth}. If branching on both the right and left are allowed, then two active tips cross each other with a positive probability, leading to Figure~\ref{fig:simuboth}(a), were closed rectangles are created (one can show that these events are not possible in the model with branching on one side only, which is what we consider in this manuscript). This surprising feature results from the very simple model that we consider : the fragments grow at a constant speed. These can be seen as artefacts, and we tested numerically two possible modifications of the model: in Figure~\ref{fig:simuboth}(b) we assume that active two active tips coming across do not annihilate each other, but are both able to continue growing: the fragments are then able to cross. In Figure~\ref{fig:simuboth}(c), a different modification is proposed: when two active tips cross, the filament that as produced with the least lateral branching continues growing, while the other one is stopped.

\begin{figure}
\centering
\subcaptionbox{}
{\includegraphics[width=0.49\textwidth]{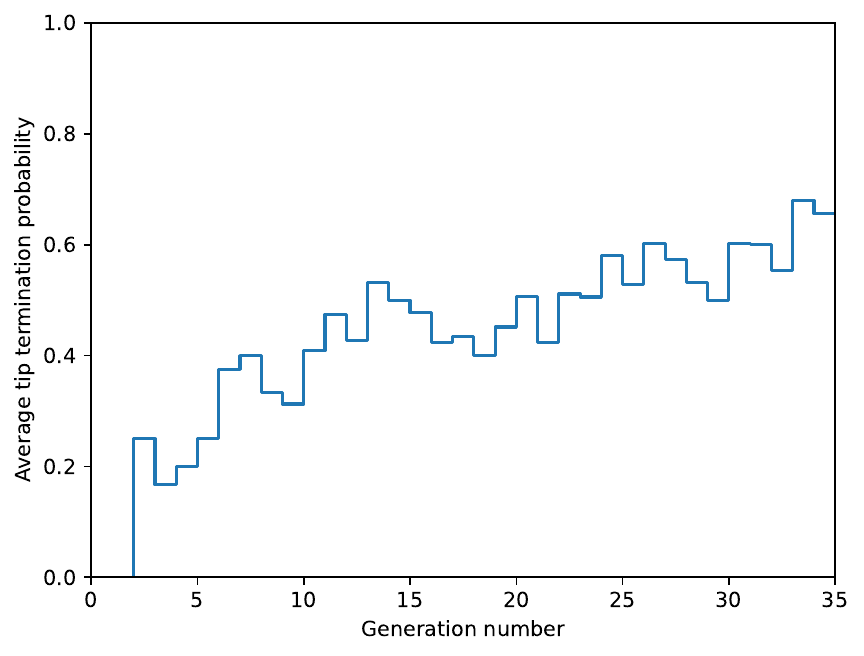}}
\subcaptionbox{}
{\includegraphics[width=0.49\textwidth]{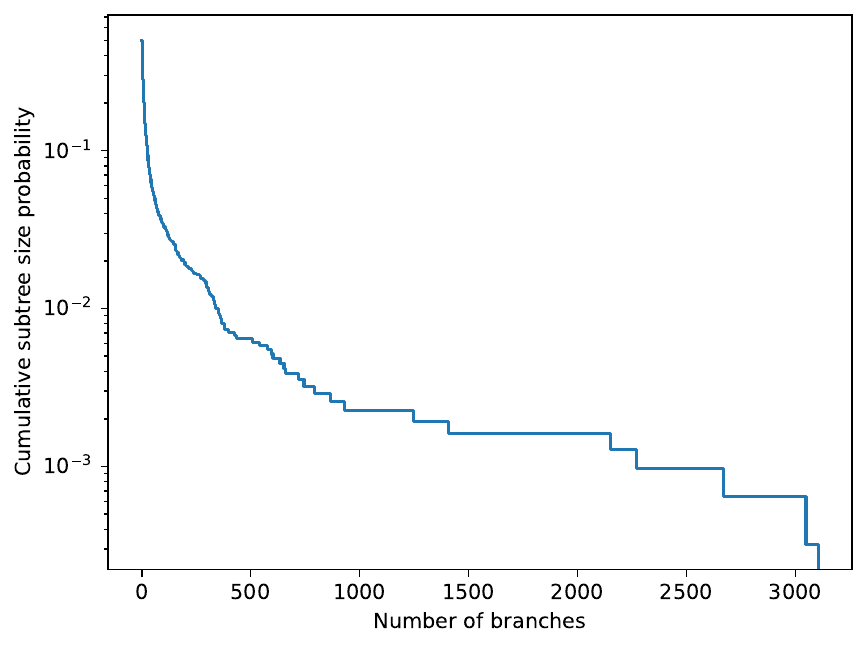}}
\subcaptionbox{}
{\includegraphics[width=0.49\textwidth]{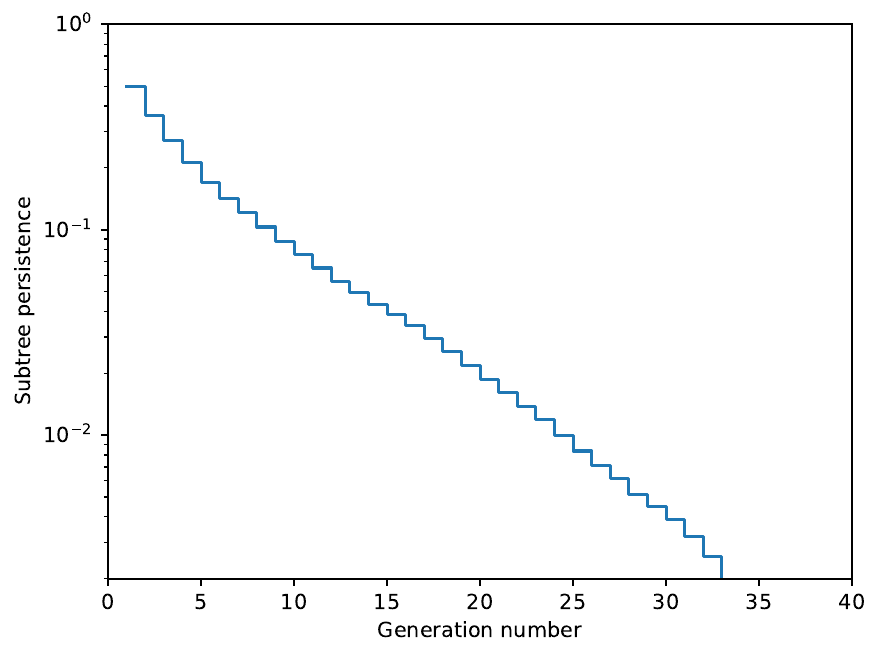}}
\caption{Properties of the genealogical tree represented in Figure~\ref{fig:simu1} (b), that corresponds to the network simulation represented in Figure~\ref{fig:simu1}(a). We have represented quantities considered in \cite{Hannezo}: (a) represents the fraction of nodes at each generation that are leafs; (b) represents, for a node uniformly chosen in the tree, the probability that the subtree it generates contains more than a certain number of nodes (or \emph{branches}); (c) fraction of the nodes at each generation of the tree that are leafs.}\label{fig:simustatistics}
\end{figure}

\begin{figure}
\centering
\subcaptionbox{}
{\includegraphics[width=0.32\textwidth]{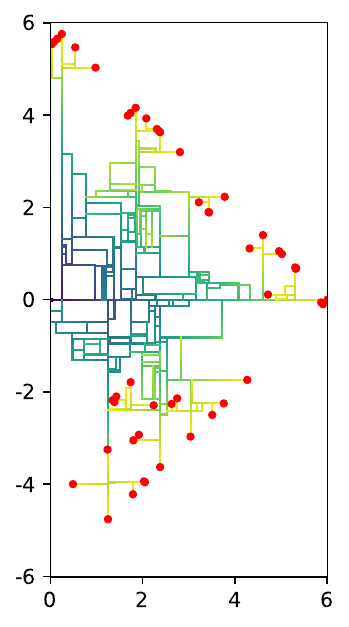}}
\subcaptionbox{}
{\includegraphics[width=0.32\textwidth]{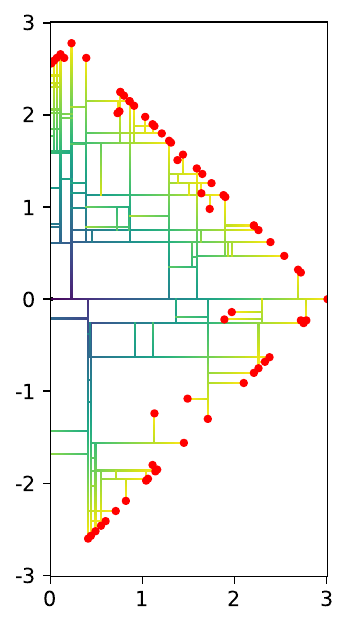}}
\subcaptionbox{}
{\includegraphics[width=0.32\textwidth]{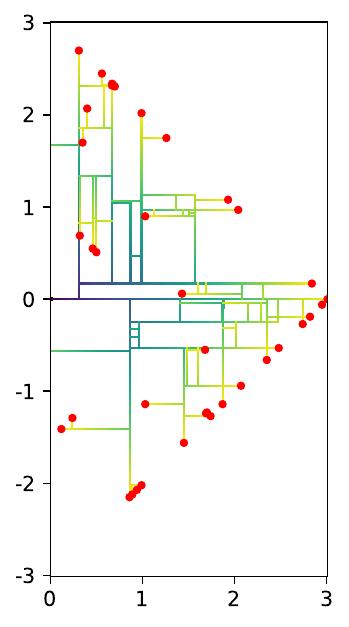}}
\caption{Simulation with branching on both sides, for three possible generalisations of the model considered in this manuscript.}\label{fig:simuboth}
\end{figure}

$\newline$

{\bf Aknowledgment.} The authors would like to warmly thank their beloved colleague, Igor Kortchemski, who moved from our lab  (CMAP, Ecole polytechnique) to ENS Paris, and who previously suggested a nice and efficient way to handle the computations involved in the estimation of small fragments and in the proof of Proposition \ref{prop:oneDimControl}.
This work  was partially funded by the Chair “Mod\'elisation Math\'ematique et Biodiversit\'e" of VEOLIA-Ecole Polytechnique-MNHN-F.X., by the European Union (ERC, SINGER, 101054787), by the Fondation Mathématique Jacques Hadamard and by the project NEMATIC (ANR-21-CE45-0010) and  by the project NOLO (ANR-
20-CE40- 0015), funded by the French Ministry of Research. Views and opinions expressed are however those of the author(s) only and do not necessarily reflect those of the European Union or the European Research Council. Neither the European Union nor the granting authority can be held responsible for them.

\bibliographystyle{plain}
\bibliography{Biblio}

@book{villani2008optimal,
  title={Optimal transport: old and new},
  author={Villani, C{\'e}dric and others},
  volume={338},
  year={2008},
  publisher={Springer}
}

@article{olivier2016does,
  title={How does variability in cells aging and growth rates influence the malthus parameter?},
  author={Olivier, Adelaide},
  journal={arXiv preprint arXiv:1602.06970},
  year={2016}
}

@article{gabriel2018steady,
  title={Steady distribution of the incremental model for bacteria proliferation},
  author={Gabriel, Pierre and Martin, Hugo},
  journal={arXiv preprint arXiv:1803.04950},
  year={2018}
}

@article{doumic2020purely,
  title={A purely mechanical model with asymmetric features for early morphogenesis of rod-shaped bacteria micro-colony},
  author={Doumic, Marie and Hecht, Sophie and Peurichard, Diane},
  journal={arXiv preprint arXiv:2008.04532},
  year={2020}
}

@book{perthame2007transport,
  title={Transport equations in biology},
  author={Perthame, Beno{\^\i}t},
  year={2007},
  publisher={Springer}
}

@article{fournier2006well,
  title={Well-posedness of Smoluchowski's coagulation equation for a class of homogeneous kernels},
  author={Fournier, Nicolas and Lauren{\c{c}}ot, Philippe},
  journal={Journal of functional Analysis},
  volume={233},
  number={2},
  pages={351--379},
  year={2006},
  publisher={Elsevier}
}

@article{santambrogio2015optimal,
  title={Optimal transport for applied mathematicians},
  author={Santambrogio, Filippo},
  year={2015},
  publisher={Springer}
}

@article{zawko,
  title={Crystal templating dendritic pore networks and fibrillar microstructure into hydrogels},
  author={Zawko, Scott A and Schmidt, Christine E},
  journal={	Acta Biomater.},
  volume={6},
  number={7},
  pages={2415--2421},
  year={2010},
  publisher={Elsevier}
}

@article{Katifori,
  title={Damage and fluctuations induce loops in optimal transport networks},
  author={Katifori, Eleni and Sz{\"o}ll{\H{o}}si, Gergely J and Magnasco, Marcelo O},
  journal={Phys. Rev. Lett.},
  volume={104},
  number={4},
  pages={048704},
  year={2010},
  publisher={APS}
}

@article{Scoffoni,
  title={Decline of leaf hydraulic conductance with dehydration: relationship to leaf size and venation architecture},
  author={Scoffoni, Christine and Rawls, Michael and McKown, Athena and Cochard, Herv{\'e} and Sack, Lawren},
  journal={Plant Physiol.},
  volume={156},
  number={2},
  pages={832--843},
  year={2011},
  publisher={American Society of Plant Biologists}
}

@article{Tomasevic,
  title={Ergodic behaviour of a multi-type growth-fragmentation process modelling the mycelial network of a filamentous fungus},
  author={Toma{\v{s}}evi{\'c}, Milica and Bansaye, Vincent and V{\'e}ber, Amandine},
  journal={ESAIM: Probab. Stat.},
  volume={26},
  pages={397--435},
  year={2022},
  publisher={EDP Sciences}
}

@article{Ronellenfitsch,
  title={Global optimization, local adaptation, and the role of growth in distribution networks},
  author={Ronellenfitsch, Henrik and Katifori, Eleni},
  journal={Phys. Rev. Lett.},
  volume={117},
  number={13},
  pages={138301},
  year={2016},
  publisher={APS}
}

@article{Courta,
  title={Mathematics and morphogenesis of cities: A geometrical approach},
  author={Courtat, Thomas and Gloaguen, Catherine and Douady, Stephane},
  journal={	Phys. Rev. E},
  volume={83},
  number={3},
  pages={036106},
  year={2011},
  publisher={APS}
}

@article{Hogan,
  title={Morphogenesis},
  author={Hogan, Brigid LM},
  journal={Cell},
  volume={96},
  number={2},
  pages={225--233},
  year={1999},
  publisher={Elsevier}
}

@article{Matos,
  title={Leaf venation network architecture coordinates functional trade-offs across vein spatial scales: evidence for multiple alternative designs},
  author={Matos, Ilaine Silveira and Boakye, Mickey and Niewiadomski, Izzi and Antonio, Monica and Carlos, Sonoma and Johnson, Breanna Carrillo and Chu, Ashley and Echevarria, Andrea and Fontao, Adrian and Garcia, Lisa and others},
  journal={New Phytol.},
  volume={244},
  number={2},
  pages={407--425},
  year={2024},
  publisher={Wiley Online Library}
}

@article{Bartelemy,
  title={Spatial networks},
  author={Barth{\'e}lemy, Marc},
  journal={Phys. Rep.},
  volume={499},
  number={1-3},
  pages={1--101},
  year={2011},
  publisher={Elsevier}
}

@article{Zubler,
  title={A framework for modeling the growth and development of neurons and networks},
  author={Zubler, Frederic and Douglas, Rodney},
  journal={Front. comput. neurosci.},
  volume={3},
  pages={757},
  year={2009},
  publisher={Frontiers}
}

@article{Kuwata,
  title={A generalised spatial branching process with ancestral branching to model the growth of a filamentous fungus},
  author={Kuwata, Lena},
  journal={arXiv preprint arXiv:2409.13627},
  year={2024}
}

@article{Hannezo,
  title={A unifying theory of branching morphogenesis},
  author={Hannezo, Edouard and Scheele, Colinda LGJ and Moad, Mohammad and Drogo, Nicholas and Heer, Rakesh and Sampogna, Rosemary V and Van Rheenen, Jacco and Simons, Benjamin D},
  journal={Cell},
  volume={171},
  number={1},
  pages={242--255},
  year={2017},
  publisher={Elsevier}
}

@article{Catellier,
author = {Catellier, R\'{e}mi and D’Angelo, Yves and Ricci, Cristiano},
title = {A mean-field approach to self-interacting networks, convergence and regularity},
journal = {Mathematical Models and Methods in Applied Sciences},
volume = {31},
number = {13},
pages = {2597-2641},
year = {2021},
doi = {10.1142/S0218202521500573},

URL = { 
    
        https://doi.org/10.1142/S0218202521500573
    
    

},
eprint = { 
    
        https://doi.org/10.1142/S0218202521500573
    
    

}
}

@article{Kreten2,
  title={Convective stability of the critical waves of an FKPP-type model for self-organized growth},
  author={Kreten, Florian},
  journal={	J. Math. Biol.},
  volume={90},
  number={3},
  pages={33},
  year={2025},
  publisher={Springer}
}

@article{Menshykau,
  title={Image-based modeling of kidney branching morphogenesis reveals GDNF-RET based Turing-type mechanism and pattern-modulating WNT11 feedback},
  author={Menshykau, Denis and Michos, Odyss{\'e} and Lang, Christine and Conrad, Lisa and McMahon, Andrew P and Iber, Dagmar},
  journal={	Nat. Commun.},
  volume={10},
  number={1},
  pages={239},
  year={2019},
  publisher={Nature Publishing Group UK London}
}

@article{HannezoOpinion,
  title={Multiscale dynamics of branching morphogenesis},
  author={Hannezo, Edouard and Simons, Benjamin D},
  journal={Curr. Opin. Cell Biol.},
  volume={60},
  pages={99--105},
  year={2019},
  publisher={Elsevier}
}

@article{Fujishima,
  title={Principles of branch dynamics governing shape characteristics of cerebellar Purkinje cell dendrites},
  author={Fujishima, Kazuto and Horie, Ryota and Mochizuki, Atsushi and Kengaku, Mineko},
  journal={Development},
  volume={139},
  number={18},
  pages={3442--3455},
  year={2012},
  publisher={Company of Biologists}
}

@article{Yu,
  title={Simple rules determine distinct patterns of branching morphogenesis},
  author={Yu, Wei and Marshall, Wallace F and Metzger, Ross J and Brakeman, Paul R and Morsut, Leonardo and Lim, Wendell and Mostov, Keith E},
  journal={Cell Syst.},
  volume={9},
  number={3},
  pages={221--227},
  year={2019},
  publisher={Elsevier}
}

@article{Turing,
  title={The chemical basis of morphogenesis},
  author={Turing, Alan Mathison},
  journal={Bull. Math. Biol.},
  volume={52},
  number={1},
  pages={153--197},
  year={1990},
  publisher={Springer}
}

@article{Kreten,
  title={Traveling waves of an FKPP-type model for self-organized growth},
  author={Kreten, Florian},
  journal={J. Math. Biol.},
  volume={84},
  number={6},
  pages={42},
  year={2022},
  publisher={Springer}
}

@article{Sun,
  title={Turing mechanism underlying a branching model for lung morphogenesis},
  author={Xu, Hui and Sun, Mingzhu and Zhao, Xin},
  journal={PLoS One},
  volume={12},
  number={4},
  pages={e0174946},
  year={2017},
  publisher={Public Library of Science San Francisco, CA USA}
}

@book{BansayeMeleard2015,
  title     = {Stochastic Models for Structured Populations: Scaling Limits and Long Time Behavior},
  author    = {Vincent Bansaye and Sylvie M{\'e}l{\'e}ard},
  series    = {Mathematical Biosciences Institute Lecture Series},
  volume    = {1.4},
  publisher = {Springer},
  address   = {Cham, Switzerland},
  year      = {2015},
  isbn      = {978-3-319-21710-9},
  doi       = {10.1007/978-3-319-21711-6},
}

@article{Birkneral,
  title={Survival and complete convergence for a branching annihilating random walk},
  author={Birkner, Matthias and Callegaro, Alice and {\v{C}}ern{\`y}, Ji{\v{r}}{\'\i} and Gantert, Nina and Oswald, Pascal},
  journal={	Ann. Appl. Probab.},
  volume={34},
  number={6},
  pages={5737--5768},
  year={2024},
  publisher={Institute of Mathematical Statistics}
}

@article{Bramson,
  title={The survival of branching annihilating random walk},
  author={Bramson, Maury and Gray, Lawrence},
  journal={	Probab. Theory Relat. Fields},
  volume={68},
  number={4},
  pages={447--460},
  year={1985},
  publisher={Springer}
}

@article{CardyT,
  title={Theory of branching and annihilating random walks},
  author={Cardy, John and T{\"a}uber, Uwe C},
  journal={	Phys. Rev. Lett.},
  volume={77},
  number={23},
  pages={4780},
  year={1996},
  publisher={APS}
}

@article{Bertoin,
  title={Markovian growth-fragmentation processes},
  author={Bertoin, Jean},
  year={2017},
    journal={Bernouilli},
  volume={23},
  number={2},
  pages={1082--1101},
  year={1996},
  publisher={Bernoulli Society for Mathematical Statistics and Probabilit}
}

@article{BCK,
  title={Random planar maps and growth-fragmentations},
  author={Bertoin, Jean and Curien, Nicolas and Kortchemski, Igor},
  journal={Ann. Probab.},
  volume={46},
  number={1},
  pages={207--260},
  year={2018},
  publisher={JSTOR}
}

@article{SilvaPardo,
  title={Multitype self-similar growth-fragmentations},
  author={Da Silva, William and Pardo, Juan Carlos},
  journal={ALEA - Lat. Am. J. Probab. Math. Stat.},
  volume={21},
  pages={207--260},
  year={2024},
  publisher={IMPA}
}

@article{DJ,
  title={Probabilistic representations of fragmentation equations},
  author={Deaconu, Madalina and  Lejay, Antoine },
  journal={Probab. Surveys},
  volume={20},
  pages={226-290},
  year={2023},
  publisher={ Institute of Mathematical Statistics, Bernoulli Society for Mathematical Statistics and Probability}
}

@article{growth,
  title={A spatially-dependent fragmentation process},
  author={Callegaro, Alice and Roberts, Matthew I.},
  journal={	Probab. Theory Relat. Fields},
  volume={192},
  pages={163-266},
  year={2024},
  publisher={Springer}
}

@PhdThesis{Dadoun,
  author = {Benjamin Dadoun},
  title  = {{Some aspects of growth-fragmentation}},
  school = {University of Zurich },
  year   = {2020},
  url   = {https://www.zora.uzh.ch/handle/20.500.14742/178336},
  language  = {English}
}

@article{Dikec,
  title={Hyphal network whole field imaging allows for accurate estimation of anastomosis rates and branching dynamics of the filamentous fungus Podospora anserina},
  author={Dikec, J and Olivier, A and Bob{\'e}e, C and D’angelo, Y and Catellier, R and David, P and Filaine, F and Herbert, S and Lalanne, Ch and Lalucque, Herve and others},
  journal={Sci. Rep.},
  volume={10},
  number={1},
  pages={3131},
  year={2020},
  publisher={Nature Publishing Group UK London}
}

@article{Meinhardt,
  title={Morphogenesis of lines and nets.},
  author={Meinhardt, Hans},
  journal={Differentiation; research in biological diversity},
  volume={6},
  number={2},
  pages={117--123},
  year={1976}
}
\end{document}